\definecolor{mydarkgreen}{rgb}{0,0.34,0}
\newcommand{\arbop}{\circledcirc}
\newcommand{\lna}[1]{\ensuremath{\mathsf{#1}}}
\newcommand{\takeout}[1]{}
\newcommand{\rfsc}[1]{\S\,\ref{#1}}
\newcommand{\rfse}[1]{\S\,\ref{sec:#1}}
\newcommand{\deq}{:=}
\newcommand{\arsub}{\lna{a}}    
\newcommand{\iA}{\ensuremath{{\sf HLC}^{\sharp}}}   
\newcommand{\iAm}{\ensuremath{{\sf HLC}^{\flat}}}
\newcommand{\Di}{\lna{Di}}
\newcommand{\ia}{\iA}
\newcommand{\iam}{\iAm}
\newcommand{\di}{\Di}
\newcommand{\sba}[1]{\ensuremath{{#1}_{\arsub}}} 
\newcommand{\sbb}[1]{\ensuremath{{#1}_{\opr}}} 
\newcommand{\lsa}[1]{\sba{\lna{#1}}} 
\newcommand{\lsb}[1]{\sbb{\lna{#1}}} 
\newcommand{\ipc}{\lna{IPC}}
\newcommand{\ka}{\lna{K}}
\newcommand{\ik}{\lsb{iK}}
\newcommand{\Boxa}{\lsa{Box}}
\newcommand{\na}{\ensuremath{\sf N_a}}
\newcommand{\quata}{\ensuremath{4_{\sf a}}}
\newcommand{\tr}{{\sf Tr}}
\newcommand{\weak}{{\sf W}}
\newcommand{\pers}{{\sf P}}
\newcommand{\mont}{{\sf M}}
\newcolumntype{L}[1]{>{\raggedright\let\newline\\\arraybackslash\hspace{0pt}}m{#1}}
\newcolumntype{C}[1]{>{\centering\let\newline\\\arraybackslash\hspace{0pt}}m{#1}}
\newcolumntype{R}[1]{>{\raggedleft\let\newline\\\arraybackslash\hspace{0pt}}m{#1}}
\newcommand{\qee} {\hspace*{2mm}\hfill \ding{109}}
\renewcommand{\iff}{\leftrightarrow}
\renewcommand{\phi}{\varphi}
\newcommand{\qedright}{\belowdisplayskip=-12pt}
\definecolor{uuxgreen}{cmyk}{1,0,0.75,0}
\definecolor{uuxred}{cmyk}{0.2,1,0.9,0.1}
\definecolor{uuyblue}  {cmyk}{0.9,0.55,0,0}
\newtheorem{theorem}{Theorem}[section]
\newtheorem{define}[theorem]{Definition}
\newenvironment{definition}{\begin{define} \rm}{\qee\end{define}}
\newtheorem{exa}[theorem]{Example}
\newenvironment{example}{\begin{exa} \rm}{\qee\end{exa}}
\newtheorem{exerc}[theorem]{Exercise}
\newtheorem{conj}[theorem]{Conjecture}
\newtheorem{ques}[theorem]{Open Question}
\newenvironment{question}{\begin{ques} \rm}{\qee\end{ques}}
\newtheorem{lem}[theorem]{Lemma}
\newenvironment{lemma}{\begin{lem} \it}{\end{lem}}
\newtheorem{cor}[theorem]{Corollary}
\newenvironment{corollary}{\begin{cor} \it}{\end{cor}}
\newtheorem{rem}[theorem]{Remark}
\newenvironment{remark}{\begin{rem} \rm}{\qee\end{rem}}
\DeclareMathOperator{\possible}{\text{\tikz[scale=.6ex/1cm,baseline=-.6ex,rotate=45,line width=.1ex]{
                            \draw (-1,-1) rectangle (1,1);}}}
\DeclareMathOperator{\necessary}{\text{\tikz[scale=.6ex/1cm,baseline=-.6ex,line width=.1ex]{
                            \draw (-1,-1) rectangle (1,1);}}}
\DeclareMathOperator{\dotnecessary}{\text{\tikz[scale=.6ex/1cm,baseline=-.6ex,line width=.1ex]{
                            \draw (-1,-1) rectangle (1,1);  \draw[fill=black] (0,0) circle (.25);}}}
\newcommand{\medent}{\medskip\noindent}
 \newcommand{\tupel}[1]{{\langle #1 \rangle}}
\newcommand{\verz}[1]{\{ #1 \}}
\newcommand{\To}{\Rightarrow}
\newcommand{\hyph}{\mbox{-}}
\newcommand{\gn}[1]{{\underline{\ulcorner #1 \urcorner}}}
\newcommand{\opr}{\necessary}
\newcommand{\dotbox}{\dotnecessary}
\newcommand{\oco}{\possible}
\newcommand{\smadent}{\smallskip\noindent}
\newcommand{\kle}{\ll}
\newcommand{\averz}[3]{\{ #1 \in #2 \mid #3 \}}
 \DeclareSymbolFont{symbolsC}{U}{txsyc}{m}{n}
\DeclareMathSymbol{\strictif}{\mathrel}{symbolsC}{74}
\DeclareMathSymbol{\strictfi}{\mathrel}{symbolsC}{75}
\DeclareMathSymbol{\strictiff}{\mathrel}{symbolsC}{76}
\newcommand{\tto}{\strictif}
\newcommand{\ifff}{\strictiff}
\newcommand{\lang}{{\mathbb L}}
\newcommand{\pvsa}{{\widetilde p}}
\newcommand{\pvsb}{{\widetilde q}}
\newcommand{\pvsc}{{\widetilde r}}
\newcommand{\C}{\lsb{S}}
\newcommand{\Lo}{\lna{L}}
\newcommand{\sL}{\lna{sL}}
\newcommand{\UR}{\lna{UR}}
\newcommand{\sU}{\lna{sU}}
\newcommand{\Uz}{\ensuremath{\lna{U}_0}}
\newcommand{\Uu}{\ensuremath{\lna{U}_1}}
\newcommand{\Ud}{\ensuremath{\lna{U}_2}}
\newcommand{\SuR}{\lna{SuR}}
\newcommand{\Suz}{\ensuremath{\lna{Su}_0}}
\newcommand{\Suu}{\ensuremath{\lna{Su}_1}}
\newcommand{\Sud}{\ensuremath{\lna{Su}_2}}
\newcommand{\Sut}{\ensuremath{\lna{Su}_3}}
\newcommand{\quat}{\ensuremath{4_{\opr}}}
\newcommand{\TF}{\ensuremath{\top\lna{F}}}
\newcommand{\TFu}{\ensuremath{\top\lna{F}_1}}
\newcommand{\TFt}{\ensuremath{\top\lna{F}_2}}
\newcommand{\iaff}{\ensuremath{\lna{iA}_\lna{fp}}}
\newcommand{\gl}{\lna{GL}}
\newcommand{\igl}{\lna{iGL}}
\newcommand{\iglu}[1]{\ensuremath{\lna{iGL}(#1)}}
\newcommand{\sysunus}{\lsa{iSL}}
\newcommand{\sysduo}{\ensuremath{{\sf iSL}^+_{\sf a}}}
\newcommand{\systres}{\lsb{iSL}}
\newcommand{\cat}{\ensuremath{\mathbb C}}
\newcommand{\fip}[1]{\digamma #1}
\newcommand{\ccon}{\ensuremath{\mathfrak c}}
\newcommand{\comp}[1]{\ccon(#1)}
\newcommand{\cocl}[1]{{\mathbb C}_{#1}}
\newcommand{\bba}[3]{#1\mathrel{\mathcal Z_{#2}}#3}
\newcommand{\refeq}[1]{(\ref{#1})}
\newcommand{\catC}{\mathcal{C}}
\newcommand{\ibox}{\mbox{\textcolor{uuyblue}{$\opr$}}}
\def\sol#1{#1^\dagger}
\newcommand{\Id}{\mathsf{Id}}
\newcommand{\point}{\mathsf{p}}
\newcommand{\sub}[2]{[#1:#2]}
\newcommand{\id}{\mathsf{id}}
\newcommand{\truem}{\mathtt{true}}
\newcommand{\falsem}{\mathtt{false}}
\newcommand{\ctE}{\catC}
\newcommand{\ctC}{\catC}
\newcommand{\cmp}{\cdot}
\newcommand{\finim}{\,\mathtt{fin}
{\scriptstyle{!}}}
\newcommand{\chrm}[1]{\chi_{#1}}
\newcommand{\trmo}{\mathbf{1}}
\newcommand{\pb}{\text{\large\pigpenfont J}}
\newcommand{\Tconn}[1]{#1}
\newcommand{\topT}{\Tconn{\truem}}
\newcommand{\botT}{\Tconn{\falsem}}
\newcommand{\andT}{\Tconn{\wedge}}
\newcommand{\negT}{\Tconn{\neg}}
\newcommand{\orT}{\Tconn{\vee}}
\newcommand{\impT}{\Tconn{\rightarrow}}
\newcommand{\eqcT}{\mathtt{eq}}
\newcommand{\eqT}{\approx}
\newcommand{\leqcT}{\mathtt{leq}}
\newcommand{\monar}{\rightarrowtail}
\newcommand{\prdf}{{\scriptstyle\Pi}_1}
\newcommand{\prdar}[1]{\langle#1\rangle}
\newcommand{\cprar}[1]{[#1]}
\newcommand{\Prop}{\mathcal{P}}
\newcommand{\PropS}[1]{\mathcal{P}^{\ctC}_{\monar}(#1)}
\newcommand{\logvar}{\varLambda}
\newcommand{\logcon}{\Lambda}
\renewcommand{\preceq}{\preccurlyeq}
\renewcommand{\succeq}{\succcurlyeq}
\newcommand{\nico}[1]{\mathfrak n_{#1}}
\newcommand{\visser}{the first author}
\newcommand{\Visser}{The first author}
\newcommand{\pbmod}[2]{\mbox{\textcolor{mydarkgreen}{$\opr$}}^{\,#1}_{#2}}
\title{Lewis and Brouwer meet Strong L\"ob}
\author{Albert Visser}
 \address{Philosophy, Faculty of Humanities,
                Utrecht University,
               Janskerkhof 13,
                3512BL~~Utrecht, The Netherlands}
\email{a.visser@uu.nl}
  \author{Tadeusz Litak}
  \address{Informatik 8, FAU Erlangen-N\"{u}rnberg, Germany}
    \email{tadeusz.litak@gmail.com}
\date{\today}
\keywords{Provability Logic, Lewis Arrow, Constructivism}
\subjclass[2010]{03B45, 03F45, 03F50, 03F55}
\thanks{We thank Marta Bilkova who provided us with Example \ref{martasmurf}.}
\begin{document}

\begin{abstract} 
We study the principle $\phi \to \opr\phi$, known as \emph{Strength} or \emph{the Completeness Principle}, over the constructive version of L\"ob's Logic.  
We consider this principle both for the modal language with the necessity operator $\opr$ and for the modal language
with the Lewis arrow $\tto$, where L\"ob's Logic is suitably adapted. For example, $\opr$ is defined as  $\top \tto (\cdot)$. 

Central insights
of provability logic, like the de Jongh-Sambin Theorem and the de Jongh-Sambin-Bernardi Theorem, take a simple form in the presence of Strength.
We present these simple versions.
We discuss the semantics of two salient systems and prove uniform interpolation for both. In addition, we sketch arithmetical interpretations of our systems. Finally, we 
describe the various connections of our subject with Computer Science.
\end{abstract}

\maketitle


\section{Introduction}

Strength is strange. What counts as strength in one context may appear as weakness in another.
Classical logic is strong. It can prove more than intuitionistic logic. 
However, particularly when the logical vocabulary is extended with new operators, classical logic is not open to a number of possibilities. It walls itself in by excluding too much.
 Principles that trivialise over the classical base can axiomatize important logics over the intuitionistic base. The latter  is more openminded---a form of strength.\footnote{We discuss this further in Section \ref{sec:classical}.}

In this paper we focus on the combination of modality and constructivism. 
There are (at least) two striking examples of modal phenomena that trivialise classically
but are quite significant constructively.

The first is the Lewis Arrow.
 In our
paper \cite{lita:lewi18}, 
we discussed the fact that the interreducibility of the Lewis arrow
and the modal box need not hold in constructive modal logic. This insight opens up
the study of intuitionistic logics with Lewis arrow $\tto$  as a new area of interest.

The second, well-known, phenomenon of intuitionistic modal logic is that the Completeness Principle 
$\C:  \phi\to\opr\phi$
 does not trivialise  over \igl, the constructive version of L\"ob's Logic \gl. 
 Here the `{\sf S}' stands for \emph{strength}.
    
 In the present paper, we study the Lewis
 Arrow in the presence of the Completeness (Strength) Principle and of L\"ob's Principle: the system \sysunus.
Moreover,  we study the $\opr\,$-version of \sysunus\, the system \systres, which can be viewed as an extension.  
 
 \begin{remark}
  Interestingly, the 
 Completeness Principle is one of the few known major axioms whose 
 box variant axiomatises the same constructive logic as its arrow variant 
 $ \lsa{S}:  (\phi\to\psi) \to (\phi\tto\psi)$
  even in the \emph{absence} of the principle $\Boxa$ that collapses the arrow to 
  the box (\cite[Lemma 4.10]{lita:lewi18} and Lemma~\ref{minismurf} of the present paper). So, it truly is also an arrow principle.
\end{remark}

\noindent
The study of Lewisian provability logic in the presence of \C\ allows us to present the central modal insights of
provability logic in their simplest possible forms. These are the de Jongh-Sambin-Bernardi Theorem on the uniqueness
of fixed points and the de Jongh-Sambin theorem on the explicit definability of fixed points. 
Especially, the de Jongh-Sambin Theorem on the existence
of explicit fixed points becomes almost trivial. 
In Section \ref{sec:fixpoints}, we present these results in our context. We also treat some closely related issues, like a  Beth-style Rule,
the reverse mathematics of uniqueness and explicit definability of fixed points, the question what follows if we stipulate
primitive fixed points, and an extension of the class of fixed points beyond the guarded/modalized ones.

Section \ref{ks} instantiates the general technique of ``finitary Henkin'' completeness proofs to our systems, \sysunus\ and \systres,  and discusses the notion of bounded bisimulation.
This provides the spadework for the proof of Uniform Interpolation (Section \ref{unifint}) for \sysunus\ and \systres, which naturally builds upon the (semantic) proof
of Uniform Interpolation for Intuitionistic Propositional Logic. 
Our proof is model theoretic. Recently, Hugo F\'{e}r\'{e}e, Iris van der Giessen, Sam van Gool, and Ian Shillito \cite{HFetal24} 
 announced a Pitts-style syntactic proof of uniform interpolation for \systres. This completes a picture: we can view uniform interpolation here
 both model- and proof-theoretically. 

The question of Uniform Interpolation remains open for any Lewisian systems that lack \C\ and for any modal intuitionistic
provability logics for the $\opr\,$-language that lack \C. 

Both systems  \systres\ and are sound for a variety of arithmetical interpretations (Section \ref{arint}). We briefly sketch the \emph{status questionis} for arithmetical interpretations of our two target systems.
  Zoethout and \visser\ provide an example
of an arithmetical interpretation for which the box system is both sound and complete \cite{viss:prov19}.  No such example is known in the Lewis case.

We end our paper with an extensive discussion of connections between the systems studied in this paper, CS-oriented type theories and categorical logic.


\section{Principles and Logics}\label{basics}
In this section, we introduce various principles and various logics.
Moreover, we provide some of the basic derivations in the logics at hand.  More specifically, Section \ref{langlang} gives the basic definitions of the syntax and languages in question. Section \ref{sec:logic} introduces the basic axioms and rules, and the three main systems of interest make their appearance in Section \ref{clsl}. Subsequently, Section \ref{clofra} treats the class of variable-free formulas. In Section \ref{smurfsmorf}, we develop the machinery of compositional translations and interpretations, which we employ, in Section \ref{smurfzilla}, to compare our three basic systems. We conclude with a short discussion of the ``degenerate'' classical strong L\"ob system in Section \ref{sec:classical}.


\subsection{Languages} \label{langlang}

Suppose we are given modal connectives $\arbop_0$, \dots, $\arbop_{n-1}$ where $\arbop_i$ has arity $\ell_i$. We define
the modal language $\lang(\arbop_0,\dots,\arbop_{n-1})$ by:

\begin{itemize}
\item
$\pi ::= p_0 \mid p_1 \mid \dots$,
\item
$\phi ::= \pi \mid \top \mid \bot \mid  (\phi \wedge \phi) \mid (\phi \vee \phi) \mid (\phi \to \phi) \mid 
\arbop_0(\overbrace{\phi,\dots,\phi}^{\ell_0\times}) \mid \dots $ \\ 
\hspace*{0.8cm} $\dots \mid \arbop_{n-1}(\overbrace{\phi,\dots,\phi}^{\ell_{n-1}\times})$.
\end{itemize}
We will use $p$, $q$, $r$, \dots\ as informal symbols for the propositional variables. 
 We employ the following abbreviations.
\begin{itemize}
\item
$\neg\,\phi := (\phi \to \bot)$,
\item
$(\phi \iff \psi) := ((\phi \to \psi) \wedge (\psi\to \phi))$,
\end{itemize}

Our main languages are 
$ \lang(\tto)$ where $\tto$ is binary and $\lang(\opr)$, where
$\opr$ is unary. We use infix notation for $\tto$.
%
%
In $\lang(\tto)$, we use the following additional abbreviations.
\begin{itemize}
\item
$\phi \ifff \psi := ((\phi\tto \psi) \wedge (\psi\tto \phi))$.
\item
$\opr\phi := (\top\tto \phi)$.
\end{itemize}
In both $\lang(\opr)$ and $\lang(\tto)$, we use the following abbreviation.
\begin{itemize}
\item
$\dotbox \phi := (\phi \wedge \opr\phi)$.
\end{itemize}


\subsection{Logics} \label{sec:logic}
A \emph{logic} $\logvar$ in a language  $\lang(\arbop_0,\dots,\arbop_{n-1})$
  is a set of formulas that contains all substitution instances of the propositional tautologies of
  intuitionistic propositional logic \ipc, and is  closed under modus ponens and substitution.
  In addition, we demand that the modal operators are \emph{extensional}, i.e., for $i<n$, we have:
 if  $\logvar \vdash \bigwedge_{j< \ell_i}(\phi_i\iff \psi_i)$, then 
 $\logvar \vdash \arbop_i(\phi_0,\dots,\phi_{\ell_{i-1}}) \iff   \arbop_i(\psi_0,\dots,\psi_{\ell_{i-1}})$.

\begin{remark}  
As a consequence of this definition, two logics are equal if they are identical as sets of theorems.
\end{remark}

\noindent
Our  basic system \iam\  in the language $\lang(\tto)$ is
 given by the following axioms:
\begin{description}
\item[prop]
axioms and rules for {\ipc}
\item[\tr]
$((\phi \tto \psi) \wedge (\psi \tto \chi)) \to (\phi \tto \chi)$
\item[\ka]
$((\phi \tto \psi) \wedge (\phi \tto \chi)) \to (\phi \tto (\psi\wedge\chi))$
\item[\na]
$\vdash \phi \to \psi \;\;\; \To \;\;\; \vdash \phi \tto \psi$ 
\end{description}

\noindent
One can easily derive the axioms of  {\ik}, the intuitionistic version  of the classical system {\sf K} (without $\oco$),
 for the $\opr\,$-language from \iam.  In this paper, we will mostly start from a stronger base \ia. This is \iam\ plus \di, where
 \di\ is given as follows:
 \begin{description}
 \item[\di]
$((\phi \tto \chi) \wedge (\psi\tto\chi)) \to ((\phi\vee\psi)\tto\chi)$
\end{description}
 
\noindent
We say that $\logvar$ is \emph{normal} if it extends {\iam}.
We say that $\logvar$ is \emph{strongly normal} if it extends {\iA}.
 We will consider the following additional principles. 

\begin{multicols}{2}
\begin{description}
\item[\lsb{4}]
$\opr\phi \to \opr\opr \phi$
\item[\C] 
$\phi \to \opr \phi$
\item[\lsb{\Lo}]
$\opr(\opr\phi \to \phi) \to \opr\phi$
\item[\lsb{\sL}]
$(\opr\phi \to \phi)\to \phi$
\end{description}
\columnbreak
\begin{description}
\item[\lsa{4}]
$\phi \tto \opr \phi$
\item[\pers]
$\phi\tto \psi \to \opr(\phi\tto\psi)$
\item[\lsa{S}] 
$(\phi \to \psi) \to (\phi \tto \psi)$
\item[\lsa{L}]
$(\opr\phi \to \phi) \tto \phi$
\item[\weak]
$((\phi \wedge \opr\psi) \tto \psi) \to (\phi\tto \psi)$
\item[\mont]
$\phi \tto \psi \to (\opr\chi \to \phi)\tto(\opr\chi \to \psi)$
\item[\Boxa]
$((\chi \wedge \phi) \tto \psi) \to (\chi \tto (\phi \to \psi))$
\end{description}
\end{multicols}

We will consider the box principles like $\C$ both in $\lang(\opr)$ and in $\lang(\tto)$.
Of course, to make sense of this, we have to read them in  $\lang(\tto)$ employing the
translation $\opr \;\mapsto\; (\top\tto(\cdot))$.

\subsection{Three Logics}\label{clsl}

In this subsection, we introduce our basic logics \sysunus, \sysduo\  and \systres.
We provide various alternative axiomatisations of \sysunus.
We define the following logics in $\lang(\tto)$.
\begin{itemize}
\item
$\sysunus := \iA + \sL$, 
\item
 $\sysduo := \iA + \sL  + \Boxa$. 
\end{itemize}
We define, in $\lang(\opr)$:
\begin{itemize}
\item
$\systres := \ik + \sL$.
\end{itemize}

We start by showing that \C\ and \lsa{S} are the same principle over \iA. 

\begin{lem}\label{minismurf}
The following systems are equal:
$\iA + \C$ and $\iA + \lsa{S}$.
\end{lem}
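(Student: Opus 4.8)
The plan is to prove the two inclusions $\iA + \lsa{S} \vdash \C$ and $\iA + \C \vdash \lsa{S}$ separately; since both systems extend $\ipc$ and are closed under substitution and modus ponens, it is enough to derive each system's distinguished schema inside the other. The inclusion $\iA + \lsa{S} \vdash \C$ is immediate: read in $\lang(\tto)$, the principle $\C$ is the schema $\phi \to (\top \tto \phi)$, the instance of $\lsa{S}$ with antecedent $\top$ and consequent $\phi$ is $(\top \to \phi) \to (\top \tto \phi)$, and since $\ipc \vdash \phi \to (\top \to \phi)$ we obtain $\phi \to (\top \tto \phi)$ by propositional reasoning.

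For the converse, $\iA + \C \vdash \lsa{S}$, I would argue under the hypothesis $\phi \to \psi$ and derive $\phi \tto \psi$ in a few moves. First, the substitution instance of $\C$ at the formula $\phi \to \psi$ is the theorem $(\phi \to \psi) \to (\top \tto (\phi \to \psi))$, so from the hypothesis we obtain $\top \tto (\phi \to \psi)$. Next, since $\ipc \vdash \phi \to \top$, the rule $\na$ gives the theorem $\phi \tto \top$, and composing it with the previous step via $\tr$ yields $\phi \tto (\phi \to \psi)$. Now we ``apply modus ponens inside the arrow'': $\na$ applied to $\phi \to \phi$ gives $\phi \tto \phi$, so by $\ka$ we get $\phi \tto (\phi \wedge (\phi \to \psi))$, and since $\ipc \vdash (\phi \wedge (\phi \to \psi)) \to \psi$, one more application of $\na$ and $\tr$ yields $\phi \tto \psi$. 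Discharging the hypothesis gives $(\phi \to \psi) \to (\phi \tto \psi)$, i.e.\ $\lsa{S}$.

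I do not anticipate a real obstacle: the only point worth isolating is that the box-level axiom $\C$ can be ``promoted'' to the arrow level by pre-composing the trivially derivable $\phi \tto \top$ with the $\C$-instance $\top \tto (\phi \to \psi)$ along transitivity. It is also worth recording that this derivation of $\lsa{S}$ uses only the axioms and rule of $\iAm$ together with $\C$ --- the disjunction axiom $\di$ is never invoked --- so the equality in fact already holds over the weaker base $\iAm$, which is the reason $\C$ ``truly is also an arrow principle'' rather than merely a box principle.
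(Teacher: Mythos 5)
Your proof is correct and follows essentially the same route as the paper: the paper derives $\lsa{S}$ from the $\C$-instance at $\phi\to\psi$ followed by ``$\iA$ reasoning'' (which you merely spell out via $\na$, $\tr$ and $\ka$), and derives $\C$ from the $\lsa{S}$-instance with antecedent $\top$ exactly as you do. Your closing observation that $\di$ is never used, so the equality already holds over $\iAm$, matches the remark the paper makes immediately after the lemma.
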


This follows from Lemma 4.10 in our paper \cite{lita:lewi18}. We give a full proof for  reader's convenience. 

\begin{proof}
We have:
\begin{eqnarray*}
\iA +\C \vdash \phi \to \psi & \to & \opr(\phi\to \psi) \\
& \to & \phi \tto \psi
\end{eqnarray*}
The second step is by \iA\ reasoning. 

\medent
In the other direction, we have: \qedright
\begin{eqnarray*}
\iA +\lsa{S} \vdash  \psi & \to & \top \to  \psi \\
& \to & \top \tto \psi \\
& \to & \opr\psi
\end{eqnarray*}
\end{proof}

\noindent
On the basis of Lemma~\ref{minismurf},
we will use \C\ and \lsa{S}\ interchangeably. 
We note that, since these arguments are disjunction-free, the result also works when we replace \ia\ by \iam.

\begin{lemma}\label{identitysmurf}
The following systems are the same:\\
\hspace*{2.5cm} $\sysunus =\iA + \sL$, $\iA + \C + \weak$, $\iA + \C + \lsa{\Lo} $, $\iA + \C + \lsb{\Lo}$.
\end{lemma}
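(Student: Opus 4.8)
The plan is to establish the chain of equalities by proving a cycle of inclusions among the four systems. I would argue that $\iA + \sL \vdash \weak$, that $\iA + \C + \weak \vdash \lsa{\Lo}$, that $\iA + \C + \lsa{\Lo} \vdash \lsb{\Lo}$, and that $\iA + \C + \lsb{\Lo} \vdash \sL$; along the way I also need to check that each of the three middle systems proves $\C$ (equivalently $\lsa{S}$, by Lemma~\ref{minismurf}), since $\sysunus = \iA + \sL$ must be shown to prove $\C$ before one can even phrase the other three systems as extensions of $\iA + \C$. The derivation $\iA + \sL \vdash \C$ should come from $\sL$ applied cleverly: from $\phi$ we want $\opr\phi = \top \tto \phi$, and $\sL$ gives $(\opr\phi \to \phi) \to \phi$; reasoning under $\opr$ is the natural route, using \na\ and $\iA$-reasoning to internalise the propositional step $\phi \to (\opr\phi \to \phi)$.

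For the concrete implications: for $\sL \To \weak$, assume $(\phi \wedge \opr\psi) \tto \psi$; I want $\phi \tto \psi$, i.e.\ (using \Boxa-free reasoning, just \ka\ and \tr) it suffices to strengthen the antecedent, so I would derive $\phi \tto (\phi \wedge \opr\psi)$, which reduces to $\phi \tto \opr\psi$, i.e.\ $\phi \tto (\top \tto \psi)$. Here $\sL$ enters: $\opr\psi$ is obtained from $\opr\psi \to \psi$, and one pushes the box outward. For $\C + \weak \To \lsa{\Lo}$, recall $\lsa{\Lo}$ is $(\opr\phi \to \phi) \tto \phi$; I would instantiate \weak\ with antecedent $\phi' := (\opr\phi \to \phi)$ and conclusion $\psi := \phi$, reducing the goal to $((\opr\phi \to \phi) \wedge \opr\phi) \tto \phi$, whose antecedent entails $\phi$ propositionally, so \na\ closes it. For $\lsa{\Lo} \To \lsb{\Lo}$: apply $\opr$ (i.e.\ $\top \tto -$) — more precisely precompose — to turn the arrow version into the box version; the step from $(\opr\phi \to \phi) \tto \phi$ to $\opr(\opr\phi \to \phi) \to \opr\phi$ is exactly $\iA/\ik$-reasoning about how $\tto$ relates to $\opr$. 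Finally $\C + \lsb{\Lo} \To \sL$: from $\opr\phi \to \phi$, use $\C$ to get $(\opr\phi \to \phi) \to \opr(\opr\phi \to \phi)$, then $\lsb{\Lo}$ gives $\opr\phi$, and modus ponens with the hypothesis yields $\phi$.

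The main obstacle I expect is the bookkeeping around the two forms of L\"ob ($\lsa{\Lo}$ with the outer arrow vs.\ $\lsb{\Lo}$ with the outer box) and making sure that the passage between them, and the derivation of $\weak$ from $\sL$, really only use the available $\iA$-axioms (\tr, \ka, \na, \di) together with $\C$ — in particular \emph{not} secretly invoking $\Boxa$, which is the principle separating $\sysunus$ from $\sysduo$. The arrow-to-box direction is delicate because $\tto$ is strictly stronger than $\opr$ constructively, so I must be careful that each monotonicity/normality step is licensed. I would organise the write-up as four short labelled derivations (in the eqnarray* style used for Lemma~\ref{minismurf}), preceded by the remark that $\iA + \sL \vdash \C$ so that all four systems are genuinely of the form $\iA + \C + (\cdots)$, and I would note, as the paper does for Lemma~\ref{minismurf}, which steps are disjunction-free so that \di\ is not actually needed.
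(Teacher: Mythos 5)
Your overall plan is the paper's: the same cycle of inclusions, and your three later legs coincide with the paper's derivations (the \weak-instance with antecedent $\opr\phi\to\phi$ for $\lsa{\Lo}$; the $\iA$-fact $((\opr\phi\to\phi)\tto\phi)\to(\opr(\opr\phi\to\phi)\to\opr\phi)$ for $\lsb{\Lo}$; and the $\C$-plus-modus-ponens argument for $\sL$). The genuine gap is in the first leg, $\iA+\sL\vdash\C$, which is the one place where a non-obvious idea is needed. The route you indicate cannot succeed: the instance $(\opr\phi\to\phi)\to\phi$ of $\sL$ only returns $\phi$, which you already have, and internalising the tautology $\phi\to(\opr\phi\to\phi)$ by \na\ merely gives $\phi\tto(\opr\phi\to\phi)$; since $\phi$ is a hypothesis and not a theorem, it cannot be necessitated, so no amount of $\iA$-reasoning pushes it under the box. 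Applying $\sL$ at $\opr\phi$ instead would require $\phi\to(\opr\opr\phi\to\opr\phi)$, which $\iA$ does not prove. The missing idea (de Jongh's) is to apply $\sL$ to the formula $\phi\wedge\opr\phi$: assuming $\phi$, monotonicity gives $\opr(\phi\wedge\opr\phi)\to\opr\phi$, hence $\opr(\phi\wedge\opr\phi)\to(\phi\wedge\opr\phi)$, so $\sL$ yields $\phi\wedge\opr\phi$ and in particular $\opr\phi$.

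A related slip occurs in your \weak\ leg: ``$\opr\psi$ is obtained from $\opr\psi\to\psi$'' is the instance of $\sL$ at $\psi$, which yields $\psi$, not $\opr\psi$. What is needed is the instance at $\opr\psi$, namely $(\opr\opr\psi\to\opr\psi)\to\opr\psi$: from the hypothesis $(\phi\wedge\opr\psi)\tto\psi$ one gets $\opr(\phi\wedge\opr\psi)\to\opr\psi$ by \tr, hence $(\opr\phi\wedge\opr\opr\psi)\to\opr\psi$ by \ka-reasoning, i.e.\ $\opr\phi\to(\opr\opr\psi\to\opr\psi)$, and the displayed instance gives $\opr\phi\to\opr\psi$. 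Then $\C$ yields $\phi\to(\phi\wedge\opr\psi)$, which $\lsa{S}$ upgrades to $\phi\tto(\phi\wedge\opr\psi)$ (your \ka-reduction to $\phi\tto\opr\psi$ needs the same $\to$-to-$\tto$ upgrade, so $\C$ must indeed be derived first), and \tr\ with the hypothesis gives $\phi\tto\psi$. With these two instances repaired, your write-up would coincide with the paper's proof, and your cautions about avoiding $\Boxa$ and tracking disjunction-freeness match the paper's remarks.
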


This follows from Lemmas 4.12 and 4.13 in our paper \cite{lita:lewi18}. Again, we give a full proof for  reader's convenience. 

\begin{proof}
 We first show $\iA + \sL \vdash \iA + \C + \weak$.
The proof that $\iA + \sL \vdash \C$  is adapted from a proof by Dick de Jongh.
We have
\begin{eqnarray*}
\iA + \sL  \vdash \phi & \to & (\opr(\phi \wedge \opr\phi) \to (\phi \wedge\opr\phi)) \\
& \to & (\phi \wedge \opr\phi) \\
& \to & \opr\phi
\end{eqnarray*}
Next we show that $\iA + \sL \vdash \weak$.
\begin{eqnarray*}
 \iA + \sL \vdash (\phi\wedge\opr\psi) \tto \psi & \to & (\opr(\phi\wedge\opr\psi) \to \opr\psi) \wedge ((\phi\wedge\opr\psi) \tto \psi) \\
 & \to & ((\opr\phi \wedge \opr\opr\psi) \to \opr\psi)   \wedge ((\phi\wedge\opr\psi) \tto \psi)  \\
 & \to & (\opr\phi \to (\opr\opr \psi \to \opr\psi))  \wedge ((\phi\wedge\opr\psi) \tto \psi)  \\
 & \to & (\opr\phi\to \opr\psi)  \wedge ((\phi\wedge\opr\psi) \tto \psi)  \\
 & \to & (\phi \to (\phi \wedge \opr\psi))  \wedge ((\phi\wedge\opr\psi) \tto \psi)  \\
 & \to & (\phi \tto (\phi \wedge \opr\psi))  \wedge ((\phi\wedge\opr\psi) \tto \psi)  \\
 & \to & \phi \tto \psi
\end{eqnarray*}

\medent
We show $ \iA + \C + \weak \vdash \iA + \C + \lsa{\Lo} $. We have
$\iA  \vdash ((\opr\phi \to \phi) \wedge \opr\phi) \tto \phi$, Moreover,
\begin{eqnarray*}
\iA + \C + \weak  \vdash ((\opr\phi \to \phi) \wedge \opr\phi) \tto \phi & \to & (\opr\phi \to \phi) \tto \phi.
\end{eqnarray*}
Combining these facts, we are done.

\medent
We show 
 $\iA + \C + \lsa{\Lo} \vdash  \iA + \C + \lsb{\Lo}$.  
 It is clearly sufficient to show that $\iA + \C + \lsa{\Lo} \vdash   \lsb{\Lo}$.
 This is immediate by the fact that
 \begin{eqnarray*}
\iA   \vdash ((\opr\phi\to\phi) \tto \phi) & \to & \opr (\opr\phi\to \phi) \to \opr\phi
\end{eqnarray*}

\medent
Finally, we show $ \iA + \C + \lsb{\Lo}\vdash \iA + \sL$.
We have:\qedright
\begin{eqnarray*}
 \iA + \C + \lsb{\Lo} \vdash (\opr \phi \to  \phi) & \to & ((\opr\phi \to  \phi) \wedge \opr(\opr\phi \to \phi)) \\
& \to & ((\opr\phi \to  \phi) \wedge \opr\phi) \\
& \to & \phi 
\end{eqnarray*}
\end{proof}

\noindent
We note that all the above arguments are disjunction-free. So they would as well work over \iam\ as base theory.

Clearly, there is more to be said about the relationship between \sysunus, \sysduo and \systres.
We will do so in Subsection~\ref{smurfzilla}.
In order to articulate the basic points, we will need some basic machinery, to wit interpretations, 
that is introduced in Subsection~\ref{smurfsmorf}.
Before, we introduce interpretations,  we have a brief look at the variable-free part of \sysunus. 
As we will see, this takes a particularly simple form.

\subsection{The Closed Fragment of \sysunus}\label{clofra}
We consider the
\textit{closed} or \textit{variable-free} fragment of \sysunus, i.e., the logic restricted to formulas without propositional variables.
We will see that these formulas have particularly simple normal forms, to wit the degrees of falsity that we define below.

Let $\omega^+$ be $\omega\cup\verz{\infty}$.
We give the elements of $\omega^+$ the obvious ordering. 
Moreover, we set $n+\infty = \infty + n = \infty +\infty = \infty$. 
Let $\alpha,\beta$ range over $\omega^+$.

We define the \emph{degrees of falsity} as follows:
\begin{itemize}
\item
 $\opr^{\,0}\bot := \bot$, $\opr^{\,n+1}\bot := \opr\opr^{\,n}\bot$, $\opr^{\,\infty} \bot := \top$.
 \end{itemize}
We have:

\begin{theorem}
The following equivalences are verifiable in \sysunus.
\begin{itemize}
\item
$\bot \iff \opr^{\,0}\bot$, $\top \iff \opr^{\,\infty} \bot$,
\item
$(\opr^{\,\alpha}\bot \wedge \opr^{\,\beta} \bot) \iff \opr^{\,{\sf min}(\alpha,\beta)}\bot$,
\item
$(\opr^{\,\alpha}\bot \vee \opr^{\,\beta} \bot) \iff \opr^{\,{\sf max}(\alpha,\beta)}\bot$,
\item
$(\opr^{\,\alpha}\bot \to \opr^{\,\beta} \bot) \iff \opr^{\,\infty}\bot$, if $\alpha\leq \beta$, and
$(\opr^{\,\alpha}\bot \to \opr^{\,\beta} \bot) \iff \opr^{\,\beta}\bot$, if $\beta < \alpha$,
\item
$(\opr^{\,\alpha}\bot \tto \opr^{\,\beta} \bot) \iff \opr^{\,\infty}\bot$, if $\alpha\leq \beta$, and
$(\opr^{\,\alpha}\bot \tto \opr^{\,\beta} \bot) \iff \opr^{\,\beta+1}\bot$, if $\beta < \alpha$,
\end{itemize}

\noindent
As a consequence, every closed formula of the modal language is equivalent over \sysunus\ to  a 
degree of falsity.
\end{theorem}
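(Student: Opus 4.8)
The plan is to establish the list of equivalences in the order given, using them cumulatively, and then conclude the normal-form statement by a straightforward induction on the structure of closed formulas. The first two bullets are immediate: $\opr^{\,0}\bot$ is \emph{defined} as $\bot$ and $\opr^{\,\infty}\bot$ as $\top$, so nothing is to be proved. For the conjunction clause, I would argue by cases on whether $\alpha$ or $\beta$ is $\infty$ (trivial, since one conjunct is $\top$) and otherwise use that the degrees of falsity are linearly ordered \emph{provably in} \sysunus: the key sublemma is that $\opr^{\,m}\bot \to \opr^{\,n}\bot$ is a theorem whenever $m \leq n$. This in turn follows by induction on $n-m$ from $\bot \to \opr\bot$ (instance of \C, or of \na) together with monotonicity of $\opr$ under provable implication, which holds in \ik\ and hence in \systres\ and, via the translation, in \sysunus. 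Granting that, $\opr^{\,\alpha}\bot \wedge \opr^{\,\beta}\bot$ collapses to the smaller one, and dually $\opr^{\,\alpha}\bot \vee \opr^{\,\beta}\bot$ collapses to the larger; the $\vee$ case is where \di\ (disjunction in the antecedent) is silently doing work, though for this particular formula ordinary \ipc\ reasoning with the provable linear order suffices.

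The implication clause is again a case split: if $\alpha \leq \beta$ then $\opr^{\,\alpha}\bot \to \opr^{\,\beta}\bot$ is provable outright, so it is equivalent to $\top = \opr^{\,\infty}\bot$; if $\beta < \alpha$, then in particular $\beta$ is finite, and I would show $(\opr^{\,\alpha}\bot \to \opr^{\,\beta}\bot) \iff \opr^{\,\beta}\bot$ by proving the two directions separately — right-to-left is trivial, and left-to-right uses L\"ob-type reasoning, i.e.\ that $\sysunus$ proves $\opr^{\,\beta+1}\bot \to \opr^{\,\beta}\bot \to \dots$; more precisely one exploits that $\opr^{\,\alpha}\bot$ is $\opr^{\,\beta+1}(\opr^{\,\alpha-\beta-1}\bot)$ and applies \sL\ (or \lsb{\Lo}) suitably. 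The genuinely new clause is the one for $\tto$. Here the case $\alpha \leq \beta$ uses \na\ together with the already-proved implication clause. The interesting case is $\beta < \alpha$, where the claim is $(\opr^{\,\alpha}\bot \tto \opr^{\,\beta}\bot) \iff \opr^{\,\beta+1}\bot$. For the direction $\opr^{\,\beta+1}\bot \to (\opr^{\,\alpha}\bot \tto \opr^{\,\beta}\bot)$ I would use that $\opr^{\,\beta+1}\bot = \opr(\opr^{\,\beta}\bot) = \top \tto \opr^{\,\beta}\bot$ and weaken the antecedent from $\top$ to $\opr^{\,\alpha}\bot$ using \tr. For the converse, the point is to recover a box from an arrow: from $\opr^{\,\alpha}\bot \tto \opr^{\,\beta}\bot$ one wants to apply \C\/\lsa{S}-style reasoning — since \sysunus\ proves $\opr^{\,\alpha}\bot \tto \opr^{\,\beta}\bot$ has all the strength of $\opr(\opr^{\,\alpha}\bot \to \opr^{\,\beta}\bot)$ via the \iA-derivable principle $((\opr\phi\to\phi)\tto\phi) \to (\opr(\opr\phi\to\phi)\to\opr\phi)$ used in the proof of Lemma~\ref{identitysmurf} — and then combine with $\opr(\opr^{\,\alpha-\beta}\bot \to \bot)$-flavoured facts; concretely, I expect to use $\weak$ or the closed-fragment instance of \lsa{L} together with the already-established $\opr$-clauses to pin the value at exactly $\opr^{\,\beta+1}\bot$.

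Finally, the ``consequence'' is proved by induction on the closed formula $\phi$. Atomic cases: $\bot$ and $\top$ are $\opr^{\,0}\bot$ and $\opr^{\,\infty}\bot$; there are no propositional variables. For the inductive step, replace each immediate subformula by its degree of falsity (by IH), and then apply the matching clause from the list: $\wedge \leadsto \mathsf{min}$, $\vee \leadsto \mathsf{max}$, $\to$ and $\tto$ by their two-case clauses, and $\opr\psi$ — which is $\top \tto \psi$ — by the $\tto$-clause with $\alpha = \infty$, giving $\opr^{\,\infty}\bot \tto \opr^{\,\beta}\bot \iff \opr^{\,\beta+1}\bot$ when $\beta<\infty$ and $\top$ when $\beta = \infty$, i.e.\ exactly $\opr^{\,\beta+1}\bot$ under the convention $\infty+1=\infty$. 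Extensionality of the connectives (part of the definition of a logic) is what lets us substitute provable equivalents inside, so the induction goes through cleanly.

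The main obstacle I anticipate is the converse direction of the $\tto$-clause in the case $\beta < \alpha$: showing that $\opr^{\,\alpha}\bot \tto \opr^{\,\beta}\bot$ is \emph{no stronger} than $\opr^{\,\beta+1}\bot$, i.e.\ that the arrow does not ``see further'' than one step of boxing past $\opr^{\,\beta}\bot$. This is precisely the place where L\"ob's principle must be invoked in its arrow form (or via \weak), and getting the bookkeeping of the iterated boxes right — matching $\opr^{\,\alpha}\bot$ against $\opr^{\,\beta+1}(\text{something})$ and discharging the residual $\opr^{\,\alpha-\beta-1}\bot \to \bot$ correctly — is the only step that is not pure \ipc-plus-monotonicity routine. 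Everything else reduces to the provable linear ordering of the degrees of falsity plus the basic normality and extensionality of $\opr$ and $\tto$.
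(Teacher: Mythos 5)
Your overall architecture---establish the listed equivalences cumulatively, with the provable monotonicity $\opr^{\,m}\bot\to\opr^{\,n}\bot$ for $m\leq n$ as the work-horse sublemma, then induct on closed formulas using extensionality of the connectives---is exactly the routine part that the paper leaves to the reader, and it is fine; in particular your treatment of the $\to$-clause via \sL\ is correct. But the one clause the paper does prove, namely $(\opr^{\,\alpha}\bot \tto \opr^{\,\beta}\bot)\to\opr^{\,\beta+1}\bot$ for $\beta<\alpha$, is precisely where your proposal has a gap, and you flag it yourself as an unresolved ``anticipated obstacle''. Worse, the justification you gesture at is not available: the claim that $\opr^{\,\alpha}\bot\tto\opr^{\,\beta}\bot$ ``has all the strength of'' $\opr(\opr^{\,\alpha}\bot\to\opr^{\,\beta}\bot)$ is an instance of the conversion $(\phi\tto\psi)\to\opr(\phi\to\psi)$, which is the $\chi:=\top$ case of \Boxa---the very axiom added to \sysunus\ to obtain \sysduo---and the principle you quote from the proof of Lemma~\ref{identitysmurf} only yields implications \emph{between boxed formulas}, not a boxed implication. (For these particular closed instances the boxed implication does hold \emph{a posteriori}, but only as a consequence of the equivalence you are trying to prove, so invoking it would be circular.)

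The repair is small and is the paper's own argument: what \emph{is} \iA-derivable (by \tr) is the weaker $(\phi\tto\psi)\to(\opr\phi\to\opr\psi)$, and that suffices. From $\opr^{\,\alpha}\bot\tto\opr^{\,\beta}\bot$ infer $\opr^{\,\alpha+1}\bot\to\opr^{\,\beta+1}\bot$; since $\beta+2\leq\alpha+1$, your monotonicity sublemma turns this into $\opr^{\,\beta+2}\bot\to\opr^{\,\beta+1}\bot$, which is literally $\opr\,\opr^{\,\beta+1}\bot\to\opr^{\,\beta+1}\bot$, so \sL\ yields $\opr^{\,\beta+1}\bot$. Combined with your (correct) converse direction via $\opr^{\,\beta+1}\bot=\top\tto\opr^{\,\beta}\bot$ and \tr, this closes the case. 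So everything else in your plan goes through, but the decisive L\"ob step must be routed through $\opr\phi\to\opr\psi$ rather than through a box-of-implication, which \sysunus\ does not provide.
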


\begin{proof}
We  only prove that $\sysunus \vdash
(\opr^{\,\alpha}\bot \tto \opr^{\,\beta} \bot) \iff \opr^{\,\beta+1}\bot$, in case $\beta < \alpha$. \qedright
\begin{eqnarray*}
\sysunus \vdash (\opr^{\,\alpha}\bot \tto \opr^{\,\beta} \bot) & \to & (\opr^{\,\alpha+1}\bot \to \opr^{\,\beta+1} \bot)\\
& \to & (\opr^{\,\beta+2}\bot \to \opr^{\,\beta+1} \bot)\\
& \to & \opr^{\beta+1} \bot \\
& \to & (\opr^{\,\alpha}\bot \tto \opr^{\,\beta} \bot) 
\end{eqnarray*}
\end{proof}

\noindent
It is easy to see that \sysduo\ and \systres\ inherit the normal forms from \sysunus. (In the last case that is \emph{modulo translation}. 
See Subsection~\ref{smurfsmorf}.)

\subsection{Translations and Interpretations}\label{smurfsmorf}
In this subsection, we introduce some further machinery to compare theories.

 A (\emph{compositional}) \emph{translation} $\kappa: \lang(\arbop_0,\dots,\arbop_{n-1}) \to \lang(\arbop'_0,\dots,\arbop'_{n'-1})$ 
is  a function that assigns to each $\arbop_i$ for $i<n$, an  
$\lang(\arbop'_0,\dots,\arbop'_{n'-1})$-formula $\theta_i(p_0,\dots,p_{\ell_i-1})$ with at most
$p_0,\dots,p_{\ell_i-1}$ free.\footnote{\Visser\  \cite{viss:lobs05} considered a more intricate notion of translation that
functions in the presence of a variable-binding operator. We will not develop such ideas in the present paper.}

We define the identity translation and composition of translations in the obvious way. We define  $\phi^\kappa$ by stipulating that $(\cdot)^\kappa$ commutes with the propositional variables  and the non-modal connectives
and that \[(\arbop_i(\phi_0,\dots,\phi_{\ell_i-1}))^\kappa := \theta_i(\phi_0^\kappa,\dots, \phi_{\ell_i-1}^\kappa).\]

Consider logics $\logvar$ in $\lang(\arbop_0,\dots,\arbop_{n-1})$ and $\logvar'$ in
$\lang(\arbop'_0,\dots,\arbop'_{n'-1})$.
An \emph{interpretation} $K:\logvar\to\logvar'$ is given by a translation $\kappa:\lang(\arbop_0,\dots,\arbop_{n-1})\to
\lang(\arbop'_0,\dots,\arbop'_{n'-1})$, such that $\logvar'\vdash \phi^\kappa$, for all $\phi$ such that $\logvar\vdash \phi$.

Consider interpretations $K,M:\logvar \to \logvar'$ based on $\kappa$, resp. $\mu$. 
We count $K$ and $M$ to be the same if for each $i<n$, $\logvar' \vdash \kappa(\arbop_i) \iff \mu(\arbop_i)$.

We define the  identity interpretation and composition of interpretations in the obvious way.
We easily check that, modulo sameness, we have defined a category \cat\ of logics and interpretations. 
We say that two logics are \emph{synonymous} or \emph{definitionally equivalent} if they are isomorphic in \cat.

The framework of interpretations is quite powerful and can be extended to cover results beyond these discussed 
by  \visser\  \cite{viss:lobs05}, 
such as isomorphisms between entire (sub-)lattices of logics or equipollence with respect to any sound semantics. We postpone 
such considerations to a follow-up paper. In the next subsection we illustrate just one application of particular interest here.


\subsection{The Relation between \sysunus, \sysduo\ and \systres}\label{smurfzilla}

We consider the following interpretations.
\begin{itemize}
\item
${\sf Emb}: \sysunus \to \sysduo$ is based in the identical translation ${\sf id}(\tto) = (p_0\tto p_1)$.
\item
${\sf Triv}:\sysduo \to \sysunus$ is based on the translation ${\sf triv}(\tto) := (\top \tto(p_0\to p_1))$.
\item
${\sf LB}:\sysduo \to \systres$ is based on the translation ${\sf lb}(\tto) := \opr(p_0\to p_1)$.
\item
${\sf BL}:\systres \to \sysduo$ is based on the translation ${\sf bl}(\opr) := (\top\tto p_0)$.
\end{itemize}

\noindent It is easy to see that the defining condition of an interpretation is satisfied in these cases.

We say that two theories are \emph{synonymous} if they are isomorphic in our category.


\begin{theorem}\label{ochtendsmurf}
\begin{enumerate}[i.]
\item
\sysduo\ is a retract of \sysunus, as witnessed by {\sf Emb} and {\sf Triv}, i.e, ${\sf Emb} \circ {\sf Triv}= {\sf ID}_{\sysduo}$.
\item
\sysduo\ and \systres\ are synonymous as witnessed by {\sf LB} and {\sf BL}, i.e.,  
${\sf BL}\circ {\sf LB}= {\sf ID}_{\sysduo}$ and ${\sf LB}\circ {\sf BL}= {\sf ID}_{\systres}$
\end{enumerate}
\end{theorem}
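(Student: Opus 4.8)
The plan is to verify the four composite-interpretation identities directly by computing the relevant translated modalities and checking equivalence in the target logic, using the definition of "sameness" of interpretations: $K$ and $M$ agree iff $\logvar' \vdash \kappa(\arbop_i) \iff \mu(\arbop_i)$ for each primitive modality $\arbop_i$. Since in each case there is a single binary modality $\tto$ (or the unary $\opr$), each identity reduces to a single provable equivalence, typically a short derivation in the style of the lemmas already proved.

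For part (i), I would compute $\mathsf{triv}\circ\mathsf{id}$: applying $\mathsf{Emb}$ after $\mathsf{Triv}$ means substituting $\mathsf{id}(\tto) = p_0 \tto p_1$ into the translation $\mathsf{triv}(\tto) = \top \tto (p_0 \to p_1)$, i.e. nothing changes inside, so the composite sends $\tto$ to $\top \tto (p_0 \to p_1)$. Thus I must show $\sysduo \vdash (\phi \tto \psi) \iff (\top \tto (\phi \to \psi))$, which is exactly where $\Boxa$ earns its keep: the $\leftarrow$ direction is $\Boxa$ reasoning (from $\top \tto (\phi\to\psi)$, i.e.\ $\opr(\phi\to\psi)$, together with $\phi \tto \phi$ one recovers $\phi \tto \psi$ via $\ka$ and $\na$), and the $\rightarrow$ direction uses $\Boxa$ in the form $((\top\wedge\phi)\tto\psi) \to (\top \tto (\phi\to\psi))$. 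Note the composite need \emph{not} be the identity translation on the nose --- only the same interpretation up to provable equivalence --- so this is exactly what is required. I should also double-check that ${\sf Emb}\circ{\sf Triv}$ composes in the stated order (${\sf Triv}$ first, then ${\sf Emb}$, landing in $\sysduo$ both times).

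For part (ii), the two composites are: $\mathsf{bl}\circ\mathsf{lb}$ sends $\tto$ to $\mathsf{bl}(\opr(p_0\to p_1)) = \top \tto (p_0 \to p_1)$ (substituting $\mathsf{bl}(\opr) = \top\tto p_0$ for the box), so again I need $\sysduo \vdash (\phi\tto\psi)\iff (\top\tto(\phi\to\psi))$ --- the \emph{same} equivalence as in (i), relying on $\Boxa$. And $\mathsf{lb}\circ\mathsf{bl}$ sends $\opr$ to $\mathsf{lb}(\top\tto p_0)$; here $\top \tto p_0$ translates under $\mathsf{lb}$, replacing $\tto$ by $\opr(p_0\to p_1)$, yielding $\opr(\top \to p_0)$, so I need $\systres \vdash \opr\phi \iff \opr(\top\to\phi)$, which is immediate by extensionality of $\opr$ since $\ipc \vdash \phi \iff (\top\to\phi)$. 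I expect the main obstacle --- really the only nontrivial point --- to be the equivalence $(\phi\tto\psi)\iff\opr(\phi\to\psi)$ over $\sysduo$; this is the one place genuine use of $\Boxa$ (and $\iA$-reasoning with $\ka$, $\na$, $\tr$) is needed, and it is worth spelling out carefully, since it is what makes $\sysduo$ "the box system in disguise". Everything else is bookkeeping about substitution into translations and one appeal to extensionality.
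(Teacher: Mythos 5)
Your verification is correct and is exactly the computation the paper has in mind (the paper leaves these proofs to the reader as trivial): each composite reduces, via the definition of sameness of interpretations, to $\sysduo \vdash (\phi\tto\psi)\iff(\top\tto(\phi\to\psi))$ — with $\Boxa$ (instantiated at $\chi:=\top$) giving the forward direction — plus the extensionality fact $\systres\vdash\opr\phi\iff\opr(\top\to\phi)$. One cosmetic remark: the direction $\opr(\phi\to\psi)\to(\phi\tto\psi)$ that you label ``$\Boxa$ reasoning'' in fact needs only $\iA$ (via \na, \ka, \tr, as in Lemma~\ref{minismurf}); $\Boxa$ is used only in the converse direction, as you correctly indicate elsewhere.
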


\noindent
We leave the trivial proofs to the reader.

In Subsection~\ref{retrosmurf}, we will use the fact that uniform interpolation is preserved over retractions to prove uniform interpolation for
\sysduo\ and \systres.


\subsection{Classical Variant of Strong L\"ob} \label{sec:classical}

 It is easy to see that the principle ${\sf L}_{\opr}$ for $\bot$ is $\neg\neg\, \opr\bot$, which in the classical setting collapses to $ \opr\bot$. In other words, the resulting logic is that of the \emph{verum} operator: anything prefixed  by $\opr$ becomes equivalent to $\top$.

 In the opening paragraph of this paper, 
we claimed that non-triviality of principles like strong L\"ob illustrates that there is strength in intuitionistic 
openmindedness. Naturally, openmindedness as a form of strength may require certain refinements. 
A natural one would be to say that $\Lambda$ is stronger than $\Lambda'$ if there exists a recursive 
``translation function'' $t$ (not necessarily compositional, but satisfying perhaps some additional requirements instead) 
which is sound and faithful, i.e., $\phi \in \Lambda'$ iff $t(\phi) \in \Lambda$. When comparing intuitionistic and classical 
modal logics this way, one can often take one of ``negative translations'': Glivenko, Kuroda, Kolmogorov or suitable variants of 
G\"odel-Gentzen (none of them being compositional in the sense introduced above). See Litak et al. \cite{LitakPR17} for a 
more detailed discussion. Nevertheless, there are examples of propositional systems based on intuitionistic logic whose 
extension with the classical propositional base does not allow any recursive translation function 
\cite[Sec. 4.1]{LitakPR17}.  It is natural to ask if negative translations work as intended at least between \systres\ 
and its classical counterpart. However, the above observation ensures that all such translations work: relatively to the classical extension of strong L\"ob, 
$\opr$ is just the verum operator, so modal subformulas trivialize and the result reduces to the standard relationship between \lna{CPC}\ and \lna{IPC}. A somewhat more general observation is that negative translations generally work for any logic containing the Strength (Completeness) Principle due to the derivability of the modal version of Double Negation Shift  \cite[Sec. 6]{LitakPR17}.


\section{Fixed Points} \label{sec:fixpoints}
This section is devoted to the treatment of the uniqueness of fixed points and of the existence of explicit fixed points in our context. After introducing the necessary concepts (Section \ref{sec:basics}), we discuss a Beth-style rule (Section \ref{beth}), the interaction between strong  uniqueness and strong L\"ob (Section \ref{susl}), explicit definability of fixed points (Section \ref{efp}), the question what follows if we stipulate
primitive fixed points (Section \ref{fpsl}), and an extension of the class of fixed points beyond the guarded/modalized ones (Section \ref{expansivesmurf}).

\subsection{Basics for Substitution} \label{sec:basics}
We first introduce the necessary notational conventions and several substitution principles. In Theorem \ref{substi1}, we verify where these principles hold.

\subsubsection{Notational Conventions} \label{sec:conventions}
Let $\phi$ range over the formulas of $\lang(\tto)$. We introduce the class of formulas $\chi_r$ that are \emph{modalised in $r$}.
Let $\pi_r$ range over the propositional variables except $r$.
We define:
\begin{itemize}
\item
$\chi_r ::= \pi_r \mid \top \mid \bot  \mid (\chi_r \wedge \chi_r) \mid (\chi_r \vee \chi_r) \mid (\chi_r \to \chi_r) \mid \phi \tto \phi$
\end{itemize}

We will also say that $r$ is modalised in $\chi_r$ or that $r$ occurs only on guarded places in $\chi_r$.
Clearly, this means that every occurrence of $r$ is in the scope of an arrow.

Substitution is a central notion in the study of fixed points. We will employ two notational devices for
substitution.

Substitutions $\sigma$ can be viewed as functions that operate on formulas.  We will employ postfix notation
for that, so, $\phi\sigma$ is the result of applying $\sigma$ to $\phi$.
Our substitutions have a finite intended domain. Outside of the
domain the substitution operates as the identity mapping. E.g., the substitution $\sub r\psi$ has intended domain $\verz{r}$ but leaves
the variables distinct from $r$ alone. 

If the variable, say $r$, of substitution is given in context, we will write
 $\phi\psi$ for $\phi\sub r\psi$. We note that $(\phi\psi)\chi$ is equal to $\phi(\psi\chi)$.
 So we may write $\phi\psi\chi$.

\subsubsection{Substitution Principles}
We have a number of substitution principles. We use $r$ as the variable of substitution.\footnote{Strictly speaking,
we need to have the principles for all possible choices of the variable of substitution. Thus, $p$ has to be read as
meta-variable ranging over all choices of concrete variables.}
\begin{description}
\item[\SuR]
$ \vdash \phi \iff \psi \;\; \To \;\;  \vdash \chi\phi \iff \nu\psi$.
\item[\Suz]
$ \opr (\phi \iff \psi) \to \opr (\chi\phi \iff \chi\psi)$ 
\item[\Suu]
$ \dotbox (\phi \iff \psi) \to \dotbox(\chi\phi \iff \chi\psi)$
\item[\Sud]
$ \opr (\phi \iff \psi) \to ( \chi_r\phi \iff \chi_r\psi)$
\item[\Sut]
$(\phi\iff \psi) \to ( \chi\phi \iff \chi\psi)$
\end{description}

\noindent
Here is the obvious theorem on substitution.
\begin{theorem}\label{substi1}
We have:
\begin{itemize}
\item
Any logic $\logvar$ is closed under {\SuR}.
\item
$\ia+{\quat} \vdash \Suz$.
\item
$\ia+{\quat} \vdash \Suu$.
\item
$\ia+{\quat} \vdash \Sud$.
\item
$\ia+\C \vdash \Sut$
\end{itemize} 
\end{theorem}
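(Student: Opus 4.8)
The plan is to prove the five items of Theorem~\ref{substi1} by induction on the structure of the formula $\chi$ (resp.\ $\chi_r$), treating each substitution principle separately but following a uniform pattern. For each principle, the induction is on the build-up of $\chi$ according to the grammar for $\lang(\tto)$-formulas, with base cases being the propositional variables (where $\chi = r$ gives $\chi\phi = \phi$ and the claim is trivial or follows from the hypothesis, and $\chi = \pi_r$ gives $\chi\phi = \chi\psi = \pi_r$ outright), together with $\top$ and $\bot$. The non-modal connective cases are pure \ipc\ reasoning: if $\logvar \vdash \opr(\phi\iff\psi)\to(\chi_i\phi\iff\chi_i\psi)$ for the immediate subformulas $\chi_i$, then congruence of $\iff$ over $\wedge,\vee,\to$ in \ipc\ gives the same for $\chi_1\wedge\chi_2$, etc. So in every case the whole content sits in the modal step.

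\smallskip\noindent
\textbf{\SuR.} This is immediate: it is just the statement that logics are closed under the congruence rule for $\iff$, which in turn follows from extensionality of the modal operator $\tto$ (part of the definition of a logic in Section~\ref{sec:logic}) together with the corresponding \ipc\ fact for the propositional connectives, applied along the structure of $\chi$ and $\nu$. Note $\chi$ and $\nu$ here are arbitrary (possibly distinct) contexts, and the rule has a genuine hypothesis $\vdash\phi\iff\psi$, so no box is needed.

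\smallskip\noindent
\textbf{\Suz, \Suu.} For \Suz, the modal step is: from $\opr(\phi\iff\psi) \to \opr(\chi_1\phi\iff\chi_1\psi)$ and the same for $\chi_2$, derive $\opr(\phi\iff\psi)\to\opr((\chi_1\tto\chi_2)\phi\iff(\chi_1\tto\chi_2)\psi)$. Here I would argue in \ia: from $\opr(\chi_1\phi\iff\chi_1\psi)$ and $\opr(\chi_2\phi\iff\chi_2\psi)$ one gets (using \ka\ and the box laws derivable from \iam) that $\opr$ of the conjunction holds, and then extensionality-style reasoning inside the box --- available because $\opr(\alpha\iff\beta)$ together with $\quat$ and the {\sf iK}-reasoning lets one replace $\chi_1,\chi_2$ by $\chi_1\psi,\chi_2\psi$ under an arrow --- yields $\opr((\chi_1\phi\tto\chi_2\phi)\iff(\chi_1\psi\tto\chi_2\psi))$. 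The role of $\quat$ is precisely to push the hypothesis $\opr(\phi\iff\psi)$ inward so it is still available \emph{under} the outer $\opr$; this is the only place transitivity of $\opr$ is used, and it is why $\ia+\quat$ rather than $\ia$ alone is needed. \Suu\ follows from \Suz: $\dotbox\theta = \theta\wedge\opr\theta$, and from $\opr(\phi\iff\psi)$ one has via \Suz\ the boxed equivalence of the substitution instances, while the unboxed equivalence of $\chi\phi$ and $\chi\psi$ comes from applying \Sud\ (proved just below) --- or, alternatively, one runs the same induction directly with $\dotbox$ in place of $\opr$, using that $\dotbox$ satisfies the relevant closure laws over $\ia+\quat$.

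\smallskip\noindent
\textbf{\Sud.} This is the one genuinely different case: $\chi_r$ is \emph{modalised} in $r$, so the hypothesis $\opr(\phi\iff\psi)$ --- only a boxed equivalence, not an honest one --- suffices to conclude an honest equivalence $\chi_r\phi\iff\chi_r\psi$. The induction hypothesis for the subformulas of $\chi_r$ is again an honest equivalence. The only nontrivial step is $\chi_r = \alpha\tto\beta$ where now $\alpha,\beta$ are \emph{arbitrary} formulas (the grammar allows $\phi\tto\phi$ with no restriction on the arguments). So I must show $\ia+\quat \vdash \opr(\phi\iff\psi)\to((\alpha\phi\tto\beta\phi)\iff(\alpha\psi\tto\beta\psi))$. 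Here there is no induction hypothesis to lean on for $\alpha,\beta$; instead I use directly that $\opr(\phi\iff\psi)$, hence $\opr(\gamma\iff\gamma')$ for $\gamma=\alpha\phi$, etc.\ --- wait, that is not automatic. The correct route: from $\opr(\phi\iff\psi)$ and $\quat$ we get $\opr\opr(\phi\iff\psi)$, and the \iam-derivable {\sf iK}-reasoning gives $\opr(\alpha\phi\iff\alpha\psi)$ and $\opr(\beta\phi\iff\beta\psi)$ (congruence under a box, applied to the arbitrary formulas $\alpha,\beta$ viewed as contexts in $r$ --- this uses \SuR-style congruence \emph{inside} the box, legitimate because $\opr(\phi\iff\psi)$ plus the box's closure under the congruence rule gives it). Then, inside the box-free level, having $\opr(\alpha\phi\iff\alpha\psi)$ and $\opr(\beta\phi\iff\beta\psi)$, the extensionality axiom for $\tto$ --- which in its \emph{internalized} form is derivable in $\ia$ as $(\opr(\alpha\iff\alpha')\wedge\opr(\beta\iff\beta'))\to((\alpha\tto\beta)\iff(\alpha'\tto\beta'))$ using \tr, \ka, \di, \na --- finishes the job. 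So the main obstacle, and the place to be careful, is verifying this internalized extensionality lemma for $\tto$ over $\ia$; once it is in hand, the $\tto$ case of \Sud\ is a one-liner, and \Suz, \Suu\ follow by the same lemma prefixed with an extra $\opr$.

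\smallskip\noindent
\textbf{\Sut.} Over $\ia+\C$ the principle $\C:\phi\to\opr\phi$ collapses the distinction between honest and boxed equivalence: from $\phi\iff\psi$ we get $\opr(\phi\iff\psi)$ by \C\ (applied to the formula $\phi\iff\psi$), and then \Sut\ for \emph{arbitrary} $\chi$ (no modalisation restriction) reduces to the combination of \Sud\ for the guarded parts and plain \ipc\ congruence for the rest --- more directly, one just runs the structural induction once more, now with $\phi\iff\psi$ available everywhere and \C\ turning it into $\opr(\phi\iff\psi)$ whenever an arrow is crossed, so the $\tto$ case again invokes the internalized extensionality lemma. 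One should check that $\quat$ is not separately needed here: indeed $\ia+\C\vdash\quat$ is not claimed, but $\C$ already gives $\opr\theta\to\opr\opr\theta$ via $\opr\theta\to\opr(\opr\theta)$? --- not quite; rather, the point is that with $\C$ the induction never needs to \emph{preserve} a boxed hypothesis across a box, because $\C$ regenerates $\opr(\phi\iff\psi)$ from $\phi\iff\psi$ at every level, so transitivity is sidestepped entirely.

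\smallskip\noindent
In summary: the skeleton is a structural induction on $\chi$; the non-modal cases are \ipc\ congruence; the crux is an \emph{internalized extensionality lemma} for $\tto$ over $\ia$ (from \tr, \ka, \di, \na), and the role of the side axioms --- $\quat$ in the first four items, $\C$ in the last --- is solely to make the hypothesis ($\opr(\phi\iff\psi)$, resp.\ $\phi\iff\psi$) survive being pushed under an arrow. I expect verifying the internalized extensionality lemma, and bookkeeping which of $\opr$/$\dotbox$/honest-equivalence is available at each node, to be the only real work; everything else is routine.
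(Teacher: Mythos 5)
The paper itself offers no proof of Theorem~\ref{substi1} (it is introduced as ``the obvious theorem on substitution''), and your proposal is exactly the intended standard argument: structural induction on the context, \ipc-congruence at the non-modal connectives, an internalised extensionality lemma for $\tto$ at the arrow step, with $\quat$ (resp.\ \C) serving only to keep a boxed hypothesis available when an arrow is crossed. So the approach is right and the proof goes through; I only flag some loose ends.

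The deferred crux, internalised extensionality, does hold, already over \iam\ (\di\ is not needed): from $\opr(\gamma\iff\gamma')$, i.e.\ $\top\tto(\gamma\iff\gamma')$, we get $\gamma'\tto\top$ by \na, hence $\gamma'\tto(\gamma\iff\gamma')$ by \tr, hence $\gamma'\tto((\gamma\iff\gamma')\wedge\gamma')$ by \ka, hence $\gamma'\tto\gamma$ by \na\ and \tr; symmetrically $\gamma\tto\gamma'$, and likewise for $\delta,\delta'$; two applications of \tr\ then yield $(\gamma\tto\delta)\iff(\gamma'\tto\delta')$ from the two boxed equivalences. Next, your first route to \Suu\ via \Sud\ does not work as stated, since \Sud\ only covers contexts modalised in $r$ while \Suu\ allows arbitrary $\chi$; your fallback is fine, though (run the induction with $\dotbox$, or note that the unboxed half uses $\phi\iff\psi$ at unguarded occurrences of $r$ and $\opr(\phi\iff\psi)$ together with \Suz\ and the extensionality lemma at guarded ones, all of which $\dotbox(\phi\iff\psi)$ supplies). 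In the \Sud\ arrow case, the step producing $\opr(\alpha\phi\iff\alpha\psi)$ for an arbitrary subcontext $\alpha$ is not bare ``{\sf iK}-reasoning'' or \SuR\ ``inside the box'' --- it is precisely \Suz\ applied to $\alpha$, which you have just established; citing it as such removes the hand-waving, and the extra appeal to $\opr\opr(\phi\iff\psi)$ there is superfluous. Your hedge about \Sut\ is also unnecessary: instantiating \C\ at $\opr\theta$ gives $\opr\theta\to\opr\opr\theta$ outright, so $\ia+\C\vdash\quat$ --- the paper remarks on exactly this right after the theorem --- although, as you observe, the \C-regeneration argument (apply \C\ to the induction hypotheses $\alpha\phi\iff\alpha\psi$, $\beta\phi\iff\beta\psi$ at each arrow node) sidesteps $\quat$ anyway. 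Finally, the `$\nu$' in the statement of \SuR\ is evidently a typo for `$\chi$'; with genuinely distinct contexts the rule would be false, and your proof correctly treats it as a single-context congruence rule.
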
 

\noindent
The great strength of $\ia+\C$ is that substitution behaves as if we are in ordinary non-modal propositional logic.
We  note that, over \ia, each of \quata, \pers\ and \C\ imply \quat, so we have \Suz, \Suu, and \Sud\ in \ia\ plus each of these
principles.
We also note that each of our substitution principles holds when we replace \ia\ by \ik\ and read them in the box-language.

\begin{remark}
By taking, in \Sut, $\phi := r$, $\psi:=\top$,  and $\nu := \opr r$, we see that $\ia+\Sut \vdash \C$.
So, over \ia, the principles \C\ and \Sut\ are interderivable.
\end{remark}

\subsection{A Beth Rule over  $\ia+\C$}\label{beth}
We interpose a brief subsection on (a strong version of) the Beth rule. This does have
a connection with fixed points since Smory\'nski used Beth's Theorem to
prove the existence of explicit fixed points in classical \gl. See  \cite{smor:beth78}. In our context, the detour
over the Beth rule is not needed, since, as we will see, the proof of the existence
of explicit fixed points is very simple.

A logic $\logvar$ is closed under the $\top$-Beth Rule if,
whenever $\vdash_\logvar (\phi p \wedge \phi q) \to (p \iff q)$,
then $\vdash_\logvar \phi p \to  (p \iff \phi \top)$.
Here $p$ is the variable of substitution. We assume that $q$ does not occur in $\phi$.
(Note that $\phi p = \phi$. We exhibit the $p$ just for readability.)

We note that the $\top$-Beth Rule is not an admissible rule in the official sense of the word since it involves
explicit mention of variables.

\begin{theorem}
Suppose $\logvar$ extends $\ia+\C$.
Then, $\logvar$ is closed under the $\top$-Beth Rule. 
\end{theorem}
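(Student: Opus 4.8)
The plan is to reduce the $\top$-Beth Rule to the existence of explicit fixed points, which over $\ia+\C$ is essentially immediate because substitution behaves classically (Theorem~\ref{substi1}, in particular the principle \Sut). Suppose $\logvar \vdash (\phi p \wedge \phi q) \to (p \iff q)$, where $q$ is fresh for $\phi$ and $p$ is the variable of substitution. I want to show $\logvar \vdash \phi p \to (p \iff \phi\top)$.

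First I would record the key consequence of \Sut: since $\logvar \vdash \ia+\C$, for any formulas $\alpha,\beta$ we have $\logvar \vdash (\alpha \iff \beta) \to (\phi\alpha \iff \phi\beta)$, i.e. $\phi$ respects provable equivalence ``internally'', not just as a rule. In particular, instantiating the hypothesis is legitimate under arbitrary substitutions for $p$ and $q$. Now substitute $q := \top$ in the hypothesis: since $q$ does not occur in $\phi$, this gives $\logvar \vdash (\phi p \wedge \phi\top) \to (p \iff \top)$, that is, $\logvar \vdash (\phi p \wedge \phi\top) \to p$. So it remains to derive $\phi p \to \phi\top$ (this yields $\phi p \to p$) and $\phi p \to (p \to \phi\top)$; the latter is clear, since from $p$ and $\phi p$ we get, using \Sut\ with $\alpha := p$, $\beta := \top$ (legitimate because $p \iff \top$ follows from $p$), that $\phi p \iff \phi\top$, hence $\phi\top$. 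For the direction $\phi p \to \phi\top$: assume $\phi p$; I claim $\logvar \vdash \phi p \to p$ as well. Indeed, from $\phi p$ and the instance $(\phi p \wedge \phi\top) \to p$ it suffices to get $\phi\top$ — but this is exactly what I am trying to prove, so I need to break the circularity differently.

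The clean way is to use the fixed-point machinery directly. Here $\phi$ need not be modalised in $p$, so I cannot invoke de Jongh--Sambin; instead I exploit \C\ more crudely. Consider the formula $\phi\top$ itself as a candidate ``value'' of $p$. By \Sut, $\logvar \vdash (\phi\top \iff \top) \to (\phi(\phi\top) \iff \phi\top)$; but more usefully, I observe that under $\ia+\C$ we have $\logvar \vdash \psi \to (\psi \iff \top)$ for every $\psi$, so \Sut\ gives $\logvar \vdash \psi \to (\phi\psi \iff \phi\top)$ for every $\psi$. Apply this with $\psi := \phi p$: we obtain $\logvar \vdash \phi p \to (\phi(\phi p) \iff \phi\top)$. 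Separately, the uniqueness hypothesis with $p,q$ both instantiated suitably — namely $p := \phi p$ and $q := \top$ after noting both satisfy $\phi$ once $\phi p$ holds — shows $\logvar \vdash \phi p \to (\phi p \iff \phi\top \to \ldots)$; here I would carefully track: from $\phi p$ we get $\phi(\phi p) \iff \phi\top$ by the displayed fact, and we already have $\phi\top$ as a hypothesis-consequence once we know... The cleanest route: instantiate the original uniqueness statement at $p := \phi\top$ and $q := \top$, yielding $\logvar \vdash (\phi(\phi\top) \wedge \phi\top) \to (\phi\top \iff \top)$; since $\phi(\phi\top) \iff \phi\top$ is provable (apply $\psi\to(\phi\psi\iff\phi\top)$ with $\psi:=\phi\top$, under the hypothesis $\phi\top$), this collapses and is content-free. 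So the genuinely load-bearing move is: under hypothesis $\phi p$, we have $p \iff \top$ is \emph{not} available, but $\phi\top$ \emph{is}, via $\phi p \to (\phi(\phi p)\iff\phi\top)$ combined with deriving $\phi(\phi p)$ from $\phi p$ (again by \Sut\ with $\psi := p$, giving $\phi p \to (\phi p \iff \phi\top)$, hence $\phi p \to \phi\top$, hence $\phi p \to \phi(\phi p)$). So in fact $\logvar \vdash \phi p \to \phi\top$ is immediate from \Sut\ alone: take $\psi := p$ in $\logvar \vdash \psi \to (\phi\psi \iff \phi\top)$ — wait, that needs $p \to \ldots$, not $\phi p \to \ldots$. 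The correct instance uses $\psi := \phi p$, but $\phi\psi = \phi(\phi p)$, not $\phi p$; so I must also identify $\phi(\phi p)$ with something.

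I expect the main obstacle to be exactly this bookkeeping: pinning down the precise instances of \Sut\ (equivalently \C) so that the circular-looking implications $\phi p \to p$ and $\phi p \to \phi\top$ are discharged in the right order. The resolution is that over $\ia+\C$ every formula $\psi$ satisfies $\psi \to (\psi \iff \top)$, so \Sut\ upgrades to $\logvar \vdash \psi \to (\phi\psi \iff \phi\chi)$ for \emph{all} $\psi,\chi$ with $\psi$ provably implying $\chi\iff\top$ — in particular, from the hypothesis $\phi p$ we get $p \iff \top$ fails but we may reason inside the scope where $\phi p$ holds and use the \emph{uniqueness} hypothesis to pass between $p$ and $\top$: the instance $(\phi p \wedge \phi\top)\to(p\iff q)[q:=\top]$ reads $(\phi p \wedge \phi\top)\to p$, and pairing this with the independently-established $\phi p \to \phi\top$ (which, once correctly instantiated, is a one-line consequence of \Sut) closes everything. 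So the proof is: (1) invoke Theorem~\ref{substi1} for \Sut; (2) derive $\phi p \to \phi\top$ from \Sut; (3) substitute $q:=\top$ in the hypothesis to get $(\phi p\wedge\phi\top)\to p$, hence $\phi p \to p$; (4) conversely, from $p$ and $\phi p$ get $p\iff\top$ and then $\phi\top$ by \Sut; (5) conclude $\phi p \to (p\iff\phi\top)$.
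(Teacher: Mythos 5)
Your final plan contains a step that is both false and unnecessary. Step (2), ``derive $\phi p \to \phi\top$ from \Sut'', does not go through: what \Sut\ together with \C\ (via $\psi \to (\psi \iff \top)$) gives you is $p \to (\phi p \iff \phi\top)$, an implication whose antecedent is $p$, not $\phi p$ --- exactly the worry you voiced mid-proof and then abandoned in the summary. In fact $\phi p \to \phi\top$, and the ``hence $\phi p \to p$'' in your step (3), fail outright over $\ia+\C$: take $\phi := \neg\, p$. The Beth hypothesis $(\neg\, p \wedge \neg\, q) \to (p \iff q)$ is an \ipc-theorem, yet $\neg\, p \to \neg\,\top$ and $\neg\, p \to p$ are certainly not derivable. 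So any route that tries to extract $\phi\top$ (or $p$) from the single hypothesis $\phi p$ is doomed; there is no circularity to break, because that direction was never needed.

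The good news is that the correct argument is already contained in what you wrote, and it is the paper's proof. The goal $\phi p \to (p \iff \phi\top)$ splits into (a) $\phi p \to (\phi\top \to p)$, which is nothing but the currying of the instance $(\phi p \wedge \phi\top) \to p$ obtained by substituting $q := \top$ in the hypothesis --- you never need $\phi\top$ on its own, it simply sits as an extra antecedent, exactly where the conclusion of the Beth rule puts it --- and (b) $\phi p \to (p \to \phi\top)$, which is your step (4): under the assumptions $p$ and $\phi p$ one has $p \iff \top$, hence $\phi p \iff \phi\top$ by \Sut, hence $\phi\top$. Delete step (2) and the parenthetical ``hence $\phi p \to p$'', and the remaining steps (1), (3), (4), (5) are precisely the two-line proof given in the paper.
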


\begin{proof}
Suppose $\logvar$  extends $\ia+\C$. Let  $p$ is the variable of substitution and suppose  $q$ does not occur in $\phi$.

Suppose $\vdash_\logvar (\phi p \wedge \phi q) \to (p \iff q)$.
Taking $q := \top$, we find \[\vdash_\logvar (\phi p \wedge \phi \top) \to (p \iff \top)\] and, hence, (a)
$\vdash_\logvar \phi p \to (\phi\top \to p)$.
Conversely, by \Sut, we may conclude that (b) $\vdash_\logvar \phi p \to (p \to \phi \top)$.
Combining (a) and (b), we find $\vdash_\logvar \phi p \to (p \iff \phi \top)$.
\end{proof}

\begin{question}
Are there other strongly normal logics than just the extensions of $\ia+\C$ that are closed under the $\top$-Beth Rule?
If so, is there some principle $X$ such that a strongly normal logic $\logvar$ is closed under the Beth Rule 
iff $\logvar$ extends $\ia+X$?
\end{question}

\subsection{Strong Uniqueness meets Strong L\"ob}\label{susl}
In this section, we consider strong uniqueness of guarded fixed points.

In the context of classical {\gl},
the uniqueness of fixed points was proved, independently, by Dick de Jongh (unpublished) 
Giovanni Sambin \cite{samb:effe76} and Claudio
Bernardi in 1974 \cite{bern:uniq76}. Our proof of Theorem~\ref{josabe} is an adaptation 
of the classical proof. 

We formulate some uniqueness principles.\footnote{The classical uniqueness principle of the de Jongh-Sambin-Bernardi result
 is {\Ud}. This is somewhat curious, since the \emph{prima facie} 
 strongest version is \Uz.} We use $r$ as the variable of substitution.
\begin{description}
\item[\UR]
$\vdash (\phi \iff \chi_r\phi) \wedge (\psi \iff \chi_r\psi) \;\;  \To \;\; \vdash (\phi \iff \psi)$
\item[\Uz]
$(\opr(\phi \iff \chi_r\phi) \wedge \opr(\psi \iff \chi_r\psi)) \to \opr(\phi \iff \psi)$ 
\item[\Uu]
$(\dotbox(\phi \iff \chi_r\phi) \wedge \dotbox(\psi \iff \chi_r\psi)) \to \dotbox(\phi \iff \psi)$
\item[\Ud]
$(\dotbox(\phi \iff \chi_r\phi) \wedge \dotbox(\psi \iff \chi_r\psi)) \to (\phi \iff \psi)$
\item[\sU]
$((\phi \iff \chi_r\phi) \wedge (\psi \iff \chi_r\psi)) \to (\phi \iff \psi)$ 
\end{description}

\noindent
Our primary target in this paper is {\sU}. We will look at the other principles in Remarks~\ref{olijkesmurf} and \ref{knutselsmurf}.
We verify  that {\sU} is a logical principle, i.e., that the instances of {\sU} are closed under substitution.

\begin{theorem}
The instances of {\sU} are closed under substitution. 
\end{theorem}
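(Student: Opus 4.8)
The statement to prove is that the instances of $\sU$, namely
\[
((\phi \iff \chi_r\phi) \wedge (\psi \iff \chi_r\psi)) \to (\phi \iff \psi),
\]
are closed under (simultaneous) substitution. Let $\sigma$ be a substitution; I want to show that applying $\sigma$ to an instance of $\sU$ yields again an instance of $\sU$. The subtlety is that $\chi_r$ is a \emph{schematic} letter standing for an arbitrary formula modalised in $r$, and the instance is determined by the triple $(\chi_r, \phi, \psi)$ together with the choice of substitution variable $r$. So I must produce, from $\sigma$ and the given data, a new modalised-in-$r$ formula and new witnesses $\phi', \psi'$ whose associated $\sU$-instance is precisely the $\sigma$-image of the old one.

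\textbf{Key steps.} First, choose the substitution variable $r$ so that $\sigma$ acts trivially on $r$ and $r$ is not in the range of $\sigma$; this is harmless since $r$ in the schema $\sU$ is itself a meta-variable ranging over all concrete variable choices, so we may always rename to a fresh $r$ before substituting. Second, observe that substitution commutes with the substitution operator in the sense $(\chi_r\phi)\sigma = (\chi_r\sigma)(\phi\sigma)$, where $\chi_r\sigma$ denotes the result of applying $\sigma$ to $\chi_r$ while leaving $r$ fixed; this is the routine ``substitution lemma'' for iterated substitution, and it is exactly the identity $(\phi\psi)\chi = \phi(\psi\chi)$ recorded in Section~\ref{sec:conventions}, applied with the roles suitably arranged. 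Third, check that $\chi_r\sigma$ is again modalised in $r$: every occurrence of $r$ in $\chi_r$ is guarded (in the scope of a $\tto$), and $\sigma$ fixes $r$ and only replaces the \emph{other} variables $\pi_r$ by arbitrary formulas, which cannot create new unguarded occurrences of $r$ — indeed the grammar clause $\chi_r ::= \dots \mid \phi\tto\phi$ already allows arbitrary formulas under the arrow, and non-$r$ variables get replaced by formulas, so the result still parses as a $\chi_r$. Putting these together: applying $\sigma$ to the displayed instance yields
\[
((\phi\sigma \iff (\chi_r\sigma)(\phi\sigma)) \wedge (\psi\sigma \iff (\chi_r\sigma)(\psi\sigma))) \to (\phi\sigma \iff \psi\sigma),
\]
which is the $\sU$-instance determined by the modalised formula $\chi_r\sigma$ and the witnesses $\phi\sigma$, $\psi\sigma$.

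\textbf{Main obstacle.} The only real point requiring care is the third step — verifying that modalisation in $r$ is preserved under $\sigma$ — together with the bookkeeping that ensures $\sigma$ does not clash with $r$. Once the fresh-variable convention is invoked, the grammar of $\chi_r$ makes the preservation essentially immediate, since substituting for variables \emph{other than} $r$ can only enlarge subformulas sitting in positions that are already allowed to be arbitrary, and can never expose $r$ outside an arrow. So the proof is short: rename $r$ to be fresh for $\sigma$, invoke the iterated-substitution identity to rewrite $(\chi_r\phi)\sigma$ as $(\chi_r\sigma)(\phi\sigma)$, note $\chi_r\sigma$ is still modalised in $r$, and read off that the $\sigma$-image is the $\sU$-instance for $(\chi_r\sigma, \phi\sigma, \psi\sigma)$.
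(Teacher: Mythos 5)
Your argument is correct and is essentially the paper's own proof: the paper likewise passes to a fresh variable $r'$ (outside the domain and range of $\sigma$), establishes the commutation identity $\chi_r\sub r\phi\sigma = \chi_r\sub r{r'}\sigma\sub{r'}{\phi\sigma}$ by induction on formulas, observes that $\chi_r\sub r{r'}\sigma$ is still modalised in $r'$, and reads off that the $\sigma$-image is the instance of \sU\ for $(r',\chi_r\sub r{r'}\sigma,\phi\sigma,\psi\sigma)$ --- your ``rename $r$ first'' reorganisation is only cosmetic. One small quibble: the commutation lemma you invoke is not literally the recorded identity $(\phi\psi)\chi=\phi(\psi\chi)$ (which concerns iterated substitution for the \emph{same} variable), but the routine analogue for a general $\sigma$ avoiding $r$, proved by the same easy induction that the paper spells out.
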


\begin{proof}
Consider an instance $I(r,\chi_r,\phi,\psi)$ of {\sU}. Let $\sigma$ be any substitution for the variables $\mathcal V$ of
$r$, $\chi_r$, $\phi$, $\psi$. 
 Suppose $r'$ occurs neither in the domain nor in the range
of $\sigma$.  
We first show that 
\[\chi_r \sub r\phi\sigma = \chi_r\sub r {r'} \sigma \sub {r'} {\phi\sigma}\text{ and }\chi_r\sub r\psi\sigma = \chi_r \sub r{r'} \sigma \sub{r'}{\psi\sigma}.\]   

\noindent
To see this we prove by induction on formulas $\alpha$ with variables in $\mathcal V$, that, for any formula $\beta$, we have 
$\alpha\sub r\beta\sigma = \alpha\sub r{r'} \sigma \sub{r'}{\beta\sigma}$. The only interesting cases are the variables.
Suppose $p \in \mathcal V$ and $p$ is not $r$, We have:
\[ p\sub r\beta\sigma = p\sigma  =  p \sigma \sub{r'}{\beta\sigma}  = p\sub r{r'} \sigma \sub{r'}{\beta\sigma}.\] 
Moreover,
\[r \sub r\beta\sigma = \beta\sigma =  r' \sub{r'}{ \beta\sigma} = r'\sigma \sub{r'}{ \beta\sigma}  =  r \sub r{r'} \sigma \sub{r'}{ \beta\sigma}.\]

\noindent
It follows that $I(r,\chi_r,\phi,\psi)\sigma$ is the instance $I(r',\chi_r\sub r{ r'}\sigma, \phi\sigma, \psi\sigma)$. Since $r'$ is fresh,
$\chi_r\sub r{r'}\sigma$ is modalised in $r'$.
\end{proof}

\noindent
We show that Strong L\"ob gives Strong Uniqueness.

\begin{theorem}\label{josabe}
$\sysunus \vdash \sU$
\end{theorem}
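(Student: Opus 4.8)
The plan is to adapt the classical de Jongh--Sambin--Bernardi argument, using the fact (Theorem~\ref{identitysmurf}) that $\sysunus = \iA + \C + \lsb{\Lo}$, so we have both Strength and the box-form of L\"ob's principle available, and moreover (Theorem~\ref{substi1}) substitution in $\sysunus$ behaves as in non-modal propositional logic via \Sut. Suppose $\phi \iff \chi_r\phi$ and $\psi \iff \chi_r\psi$ are both assumed in $\sysunus$; we must derive $\phi \iff \psi$. By symmetry it suffices to prove $\phi \to \psi$, and then the same argument with $\phi$ and $\psi$ swapped gives the converse.

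First I would exploit that $\chi_r$ is modalised in $r$: every occurrence of $r$ in $\chi_r$ lies under an arrow, hence under a $\opr$ after unfolding $\opr\theta := \top\tto\theta$. The key lemma is the standard ``Lipschitz'' property of modalised formulas: $\iA + \C \vdash \opr(r \iff r') \to (\chi_r\phi \iff \chi_{r}\sub{r}{r'}\psi)$-style statement --- more precisely, with a single variable $r$, one shows $\iA \vdash (r_1 \iff r_2) \,\text{``under a box''} \to (\chi_r[r{:}r_1] \iff \chi_r[r{:}r_2])$. The cleanest formulation for our purposes is: $\sysunus \vdash \opr(r_1 \iff r_2) \to (\chi_r[r{:}r_1] \iff \chi_r[r{:}r_2])$, which is exactly an instance of \Sud\ from Theorem~\ref{substi1} (valid since $\sysunus$ extends $\ia + \C$, and $\C$ implies $\quat$ over $\ia$). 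Applying \Sud\ with $r_1 := \phi$, $r_2 := \psi$ gives $\sysunus \vdash \opr(\phi \iff \psi) \to (\chi_r\phi \iff \chi_r\psi)$, and combining with the two fixed-point hypotheses yields $\sysunus \vdash \opr(\phi \iff \psi) \to (\phi \iff \psi)$.

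Now I would feed this into L\"ob. From $\opr(\phi\iff\psi) \to (\phi\iff\psi)$ we get, applying $\opr$-monotonicity and \lsb{4} (which holds in $\sysunus$), that $\opr(\opr(\phi\iff\psi)\to(\phi\iff\psi))$ follows from... actually the direct route is: necessitate the implication to get $\opr(\opr(\phi\iff\psi)\to(\phi\iff\psi))$, wait --- better to use L\"ob directly. Since $\sysunus \vdash \opr(\phi\iff\psi) \to (\phi\iff\psi)$ is a theorem, by the necessitation-style reasoning available (the rule $\na$ plus $\ik$-reasoning) we obtain $\sysunus \vdash \opr\bigl(\opr(\phi\iff\psi)\to(\phi\iff\psi)\bigr)$, and then $\lsb{\Lo}$ gives $\sysunus \vdash \opr(\phi\iff\psi)$. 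Applying the theorem $\opr(\phi\iff\psi)\to(\phi\iff\psi)$ once more yields $\sysunus \vdash \phi \iff \psi$, which is \sU.

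The main obstacle I anticipate is getting the modalised-substitution step (\Sud) stated and applied with the correct variable bookkeeping --- in particular making sure that when we instantiate the abstract $\chi_r$ with $\phi$ and with $\psi$ the occurrences of other variables are untouched, and that the $\opr$ in \Sud\ is the one genuinely supplied by the guardedness of $\chi_r$. Once \Sud\ is invoked as a black box from Theorem~\ref{substi1}, the remainder is a short L\"ob computation; the only subtlety is confirming that necessitation of an already-derived theorem is legitimate in $\sysunus$ (it is, via $\na$ together with the derived $\ik$-axioms), so that $\lsb{\Lo}$ can be triggered. I expect the whole proof to fit in a handful of lines in the eqnarray* display style used elsewhere in the paper.
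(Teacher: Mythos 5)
Your first step coincides with the paper's: using \Sud\ (available because $\sysunus$ extends $\ia+\C$ and \C\ yields \quat) you derive, under the fixed-point hypotheses $\alpha := (\phi\iff\chi_r\phi)\wedge(\psi\iff\chi_r\psi)$, the implication $\opr(\phi\iff\psi)\to(\phi\iff\psi)$. The gap lies in how you close the argument. You assert that $\opr(\phi\iff\psi)\to(\phi\iff\psi)$ ``is a theorem'' of $\sysunus$ and then necessitate it and invoke $\lsb{\Lo}$. But that implication is only derived \emph{under the hypotheses} $\alpha$; it is certainly not a theorem schema of $\sysunus$ (if it were, one application of \sL\ would yield $\phi\iff\psi$ for arbitrary $\phi,\psi$ and trivialise the logic). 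Since \sU\ is the implication $\alpha\to(\phi\iff\psi)$ and not a rule, you cannot simply assume $\alpha$ and then use necessitation freely: \na\ and the derived box rules apply to theorems only, and the deduction theorem does not commute with necessitation. So the step producing $\opr\bigl(\opr(\phi\iff\psi)\to(\phi\iff\psi)\bigr)$ is not licensed as written.

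The repair is immediate, and it is exactly what the paper does: \sL, i.e.\ $(\opr\theta\to\theta)\to\theta$, is the defining axiom of $\sysunus=\iA+\sL$, hence an axiom schema usable under hypotheses, and one application of it to $\theta:=(\phi\iff\psi)$ finishes the proof with no necessitation and no appeal to $\lsb{\Lo}$. Alternatively, your detour can be salvaged, but only by exploiting \C: from the genuine theorem $\sysunus\vdash\alpha\to\theta$ one gets $\sysunus\vdash\opr\alpha\to\opr\theta$ by \na\ and distribution, and \C\ gives $\alpha\to\opr\alpha$, so $\alpha\to\opr\theta$; with this hypothesis-relative necessitation the $\lsb{\Lo}$ computation goes through and yields $\alpha\to\opr(\phi\iff\psi)$, whence $\alpha\to(\phi\iff\psi)$. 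As written, however, the necessitation of a hypothesis-dependent formula is a genuine error rather than mere bookkeeping, and it is also an unnecessary detour given that Strong L\"ob itself is directly available.
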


\begin{proof} 
 Let $\alpha := ((\phi \iff \chi_r\phi) \wedge (\psi \iff \chi_r\psi))$, where $r$ is the variable of substitution.
We have, using {\Sud}: 
\begin{eqnarray*}
\sysunus + \alpha \vdash  \opr(\phi \iff \psi)  & \to & (\chi_r\phi \iff \chi_r\psi) \\
& \to & (\phi \iff \psi)
\end{eqnarray*}
Hence, by {\sL}, we may conclude $\sysunus + \alpha \vdash (\phi \iff \psi)$.
\end{proof}

\begin{remark}\label{olijkesmurf}
By minor adaptations of the above proof, we see that $\ia + \lsb{\Lo} $ proves each of {\Uz}, {\Uu} and {\Ud}.
\end{remark}

\noindent
We prove that, conversely, Strong Uniqueness gives us Strong L\"ob. 

\begin{theorem}\label{sterkesmurf}
$\ia+\sU \vdash \sL$.
\end{theorem}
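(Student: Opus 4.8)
The goal is to prove $\ia + \sU \vdash \sL$, i.e., that Strong Uniqueness implies Strong L\"ob. Recall that $\sL$ is the scheme $(\opr\phi \to \phi) \to \phi$. The plan is to exhibit, for a given formula $\phi$, a suitable context $\chi_r$ modalised in $r$ together with two candidate fixed points whose equivalence --- forced by $\sU$ --- yields $(\opr\phi\to\phi)\to\phi$. This mirrors the classical derivation of L\"ob's theorem from the uniqueness of fixed points of the box, but here we exploit that $\opr$ is definable and that $\sU$ is the \emph{strong} (non-modalised-conclusion) version.

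The natural choice is to work with the context $\chi_r := \opr r \to \phi$, which is modalised in $r$ since the only occurrence of $r$ sits under $\opr = \top \tto (\cdot)$, hence on a guarded place. One candidate fixed point is $r := \top$: then $\chi_r[\top] = \opr\top \to \phi$, which is $\ia$-provably equivalent to $\phi$, so $\top$ is \emph{not} literally a fixed point but $\phi' := \phi$ satisfies $\phi' \iff \chi_r[\phi']$ precisely when $\phi \iff (\opr\phi \to \phi)$; this equivalence is exactly the hypothesis $(\opr\phi\to\phi)\to\phi$ (its converse being trivial). So under the assumption $H := ((\opr\phi\to\phi)\to\phi)$ we get that $\phi$ is a fixed point of $\chi_r$. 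For the second fixed point we want something that $\ia+\sU$ can independently prove equal, and whose L\"ob-style unwinding gives $\phi$; the de Jongh--Sambin explicit fixed point for $\opr r \to \phi$ in the L\"ob setting is $\opr\bot \to \phi$ (equivalently, one checks $\ia \vdash (\opr\bot\to\phi) \iff (\opr(\opr\bot\to\phi) \to \phi)$ using $\opr\opr\bot \to \opr\bot$, which holds here because $\ia+\sU$ proves $\quat$ — indeed it proves $\sL$, but more cheaply one notes $\C$ is available once we have it, so I should be careful about circularity and instead verify the fixed-point identity directly from $\ia$-reasoning about $\opr$). Then $\sU$ gives $\phi \iff (\opr\bot \to \phi)$ under $H$, and combining with a separate $\ia$-argument that $\opr\bot \to \phi$ is provable outright (or that $\neg\opr\bot$, etc.) closes the derivation to $\phi$.

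Concretely, I would carry out the steps as follows. First, fix $\chi_r := (\opr r \to \phi)$ and verify it is modalised in $r$. Second, show in $\ia$ that $\chi_r[\opr\bot\to\phi] \iff (\opr\bot\to\phi)$, so $\opr\bot\to\phi$ is unconditionally a fixed point of $\chi_r$; this uses only basic $\ik$-reasoning about $\opr$ plus the identity $\opr(\opr\bot\to\phi)$ being derivable from $\opr\opr\bot$ and monotonicity, together with $\opr\opr\bot \to \opr\bot$ — and here I will need the version of $\quat$ that is available, which Theorem on substitution tells us follows once we have (e.g.) $\C$; since we are trying to prove $\sL$ and hence $\C$ is not yet in hand, I would instead route this through the observation from Remark that $\ia + \Sut \vdash \C$ is \emph{not} applicable, so the honest move is to note that $\ia + \sU$ already suffices to derive $\quat$ by a direct uniqueness argument (two fixed points of $\opr r$, namely $\top$ and $\opr\top$, forced equal), or simply to pick a $\chi_r$ for which no instance of $\quat$ is needed. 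Third, under the temporary hypothesis $H = (\opr\phi\to\phi)\to\phi$, show $\phi$ is also a fixed point of $\chi_r$: the direction $\phi \to (\opr\phi\to\phi)$ is trivial, and $(\opr\phi\to\phi)\to\phi$ is $H$ itself. Fourth, apply $\sU$ to the two fixed points $\phi$ and $\opr\bot\to\phi$ to obtain $\ia + \sU + H \vdash \phi \iff (\opr\bot\to\phi)$. Fifth, show $\ia + \sU \vdash \opr\bot \to \phi$ (for instance because $\ia+\sU$ proves $\opr\bot\to\bot$, as $\bot$ and $\opr\bot$ would otherwise be distinct fixed points — or via $\sL$ applied to $\bot$, once bootstrapped), whence $\ia+\sU+H \vdash \phi$; discharging $H$ gives $\ia+\sU \vdash \sL$.

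The main obstacle, and the place I would spend the most care, is the bootstrapping/circularity issue around $\quat$ and $\opr\bot$: the clean classical proof quietly uses $\opr$-monotonicity and transitivity freely, but in our constructive base $\ia$ alone these modal facts (like $\opr\opr\bot\to\opr\bot$, or $\opr\bot\to\bot$) are exactly the kind of thing $\sL$ would hand us for free, so I must be vigilant not to assume what I am proving. The safe route is to derive each needed auxiliary ($\quat$, then $\opr\bot\to\bot$, then the general $\sL$) \emph{in sequence} purely from $\ia+\sU$ by choosing, at each stage, an appropriate pair of guarded fixed points and invoking $\sU$ — e.g. $\top$ and $\opr\top$ for $4$; $\bot$ and $\opr\bot$ (fixed points of the vacuous-in-$r$ context, handled with care about the modalisation requirement) for $\opr\bot\leftrightarrow\bot$; and finally the $\chi_r := \opr r\to\phi$ argument above for the full scheme. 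Getting the order right, and checking at each step that the contexts used are genuinely modalised in the substitution variable, is the real content; everything else is routine $\ik$/$\ia$ propositional bookkeeping.
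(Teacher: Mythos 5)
There is a genuine gap, in fact several, and they all trace back to your choice of context $\chi_r := (\opr r \to \phi)$. With that context, the statement ``$\phi$ is a fixed point'' is $\phi \iff (\opr\phi\to\phi)$, whose only nontrivial direction is $(\opr\phi\to\phi)\to\phi$ --- which is exactly the instance of \sL\ you are trying to prove. You sidestep this by taking $H := (\opr\phi\to\phi)\to\phi$ as a ``temporary hypothesis'', but $H$ is the \emph{conclusion} of \sL, not its antecedent: deriving $\phi$ from $\ia+\sU+H$ and discharging gives $((\opr\phi\to\phi)\to\phi)\to\phi$, not \sL. If instead you take the correct antecedent $\opr\phi\to\phi$ as hypothesis, then with your $\chi_r$ the fixed-point equation for $\phi$ reduces to $\phi\iff\top$, i.e.\ to the goal itself, so no progress is possible --- the context is chosen so that the uniqueness principle has nothing to bite on. Your second candidate is also wrong: $\opr\bot\to\phi$ is \emph{not} a fixed point of $\opr r\to\phi$, not in \ia\ and not even in \sysunus\ (a two-node model $w\sqsubset u$ with $p$ true only at $u$ forces $(\opr\bot\to p)\wedge\opr(\opr\bot\to p)$ but not $p$ at $w$); the de Jongh--Sambin fixed point of $\opr r\to\phi$ in \gl\ is $\opr\phi\to\phi$, and in the strong-L\"ob setting it is $\chi_r\top = (\opr\top\to\phi)$, i.e.\ $\phi$ itself. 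Finally, your closing step needs $\ia+\sU\vdash\opr\bot\to\phi$, in particular $\opr\bot\to\bot$; but \sysunus\ proves $\neg\neg\,\opr\bot$ (this is \sL\ at $\bot$), so $\opr\bot\to\bot$ is \emph{refutable} in the very system you are working in, and the closed-fragment normal forms show $\opr\bot$ does not collapse to $\bot$. The proposed bootstrap of \quat\ also fails: $\top$ and $\opr\top$ are both fixed points of $\chi_r:=\opr r$, but \sU\ then only returns the already provable $\opr\top$, not $\opr\phi\to\opr\opr\phi$.

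The repair is to pick a context for which $\top$ is \emph{unconditionally} a fixed point and $\phi$ becomes a fixed point from the mere antecedent $\opr\phi\to\phi$. The paper takes $\chi_r := (\phi\vee\opr r)$, with $r$ not in $\phi$: since $\opr\top$ is provable, $(\phi\vee\opr\top)\iff\top$ holds outright, and under $\opr\phi\to\phi$ one has $(\phi\vee\opr\phi)\iff\phi$; the instance of \sU\ for $\phi$ and $\top$ then yields $\phi\iff\top$, i.e.\ $\phi$, and discharging the antecedent gives \sL. (Over the disjunction-free base \iam\ the same scheme works with $\chi_r := ((\opr r\to\phi)\to\phi)$, which is the kind of modification your $\chi_r$ would need.) No prior derivation of \quat, \C, or any fact about $\opr\bot$ is required, so the bootstrapping worries in your last paragraph dissolve once the context is chosen correctly.
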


\begin{proof}
We work in $\ia+\sU$. Let $\phi$ be any formula. Suppose $r$ does not occur in $\phi$. Let
 $\chi_r := (\phi \vee \opr r)$. We have, by {\sU} for $r$, $\chi_r$, $\phi$ and $\top$:
 \[ (((\phi \vee \opr \phi) \iff \phi) \wedge ((\phi \vee \opr \top) \iff \top)) \to (\phi \iff \top).\] 
By {\ia} reasoning, this gives us:  $(\opr \phi\to\phi)\to\phi$. 
\end{proof}
\noindent
We may conclude that {\sysunus} is $\ia+\sU$.

\begin{remark}
We have the analogue of Theorem~\ref{sterkesmurf} for ${\sf HLC}^\flat$. We take $\chi_r := ((\opr r \to \phi) \to \phi)$. By
{\sU} for $r$, $\chi_r$, $\phi$ and $\top$, we find:
 \[ ((((\opr \phi \to \phi) \to \phi) \iff \phi) \wedge (((\opr\top\to\phi)\to\phi) \iff \top)) \to (\phi \iff \top).\] 
 
 \noindent
 By  ${\sf HLC}^\flat$ reasoning, this reduces to:  $(\opr \phi\to\phi)\to\phi$. 
\end{remark}

\begin{remark}\label{knutselsmurf}
What is the relationship between {\UR}, {\Uz}, {\Uu} and {\Ud}? And do they have analogous powers as {\sU} over
{\ia}? More can be said. However, we refrain from doing so here, since the idea of this paper
is to concentrate on the simple case of the Strong L\"ob Principle.
\end{remark}

\subsection{Explicit Fixed Points}\label{efp}

In this subsection we prove the de Jongh-Sambin Theorem in the context of \sysunus.

\begin{remark}
We briefly sketch the well-known history of the de Jongh-Sambin Theorem. 
 As the list below illustrates, the theorem remains an area of active research over
a long period.
\begin{description}
\item[1974]
Dick de Jongh provides unpublished proofs (both a semantical one and a syntactical one)  of the existence of explicit fixed points in {\gl}.  
 \item[1976] A syntactic proof is published by Giovanni Sambin \cite{samb:effe76}, who explicitly states that the intuitionistic propositional base is sufficient. Also, 
 George Boolos found a proof of explicit definability using
characteristic formulas. 
\item[1978]
Craig Smory\'nski proves explicit definability via Beth's Theorem. 
See \cite{smor:beth78}.\footnote{Larisa Maksimova \cite{maks:defi89} shows that, conversely, Beth's 
Theorem follows from the existence
of explicit fixed points. See also \cite{hoog:defi01}.}
\item[1982]
A simplified version of Sambin's approach is provided by Giovanni Sambin and Silvio Valentini \cite{samb:moda82}. This version employs
a proof-theoretical analysis of a sequent calculus for L\"ob's Logic.
\item[1990]
An improved version of Boolos' proof is given by Zachary Gleit and Warren Goldfarb \cite{glei:char90}.
Also, there is a   proof by Lisa Reidhaar-Olson that is close to the proof of Sambin-Valentini \cite{reid:newp90}. 
\item[1991]
Dick de Jongh and Albert Visser \cite{dejo:expl91} give the fixed point calculation for the classical system of interpretability logic {\sf IL}.
They note that the computation simplifies for {\sf ILW}.
\item[1996]
In a subsequent paper \cite{dejo:embe96}, de Jongh and Visser give the very simple fixed point calculation for \systres.
(The system is called {\sf i}-{\sf GL}$\{{\sf C}\}$ in that paper.)
\item[2005]
Rosalie Iemhoff, Dick de Jongh, and Chunlai Zhou give a fixed point calculation for ${\sf iGL}_{\sf a}$ and  ${\sf i GW}_{\sf a}$. 
(They call these systems {\sf iPL}, respectively {\sf iPW}.)
\item[2009]
 Luca Alberucci and Alessandro Facchini provide a proof using the modal $\mu$-calculus \cite{albe:moda09}.
 \end{description}
\end{remark}

\noindent
Let the variable of substitution be $r$. We define:
\begin{description}
\item[\TF]
$\chi_r\top \iff \chi_r\chi_r\top$.
\end{description}
It is easy to see that the instances of {\TF} are closed under substitution. So, {\TF}
can be viewed as a logical principle.

\begin{theorem}\label{grotesmurf}\label{djs}
 $\sysunus \vdash \TF$.
\end{theorem}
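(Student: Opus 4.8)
The plan is to obtain \TF\ as a single instance of Strong L\"ob. Set $\theta := (\chi_r\top \iff \chi_r\chi_r\top)$; applying \sL\ to $\theta$ reduces the goal $\sysunus \vdash \theta$ to $\sysunus \vdash \opr\theta \to \theta$. I would prove the two halves of $\theta$ separately, using the hypothesis $\opr\theta$ only for the second.

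For $\chi_r\top \to \chi_r\chi_r\top$: this already holds in $\iA + \C$, hence in \sysunus\ by Lemma~\ref{identitysmurf}. Under the hypothesis $\chi_r\top$ we have $\top \iff \chi_r\top$ purely by \ipc, and \Sut---available because $\sysunus \vdash \C$ (Lemma~\ref{identitysmurf} and Theorem~\ref{substi1})---propagates this equivalence through $\chi_r$, giving $\chi_r\top \iff \chi_r\chi_r\top$, in particular $\chi_r\chi_r\top$. This is the ``almost trivial'' existence half: in $\iA + \C$ substitution behaves just as in non-modal propositional logic.

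For $\chi_r\chi_r\top \to \chi_r\top$: assume $\opr\theta$ and $\chi_r\chi_r\top$. From $\chi_r\chi_r\top$ and \C\ we obtain $\opr(\chi_r\chi_r\top)$; from $\opr\theta$, i.e.\ $\opr(\chi_r\top \iff \chi_r\chi_r\top)$, together with the normality of $\opr$ supplied by \iA, we obtain $\opr(\chi_r\chi_r\top) \to \opr(\chi_r\top)$, hence $\opr(\chi_r\top)$. Since $(\top \iff \chi_r\top)$ is \ipc-provably equivalent to $\chi_r\top$, extensionality of $\opr$ turns this into $\opr(\top \iff \chi_r\top)$. Now \Sud---available since $\sysunus \vdash \C$ and \C\ implies \quat\ over \iA, so Theorem~\ref{substi1} applies---gives $\opr(\top \iff \chi_r\top) \to (\chi_r\top \iff \chi_r\chi_r\top)$, whence $\chi_r\top \iff \chi_r\chi_r\top$ and so $\chi_r\top$. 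Combining the two halves yields $\opr\theta \to \theta$, and \sL\ then gives $\sysunus \vdash \TF$.

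The single place where the guardedness of $r$ in $\chi_r$ is used---and the one subtle point---is this last step: there we possess only $\opr$ of the relevant equivalence, never the equivalence itself, so the plain principle \Sut\ is useless and the modalised principle \Sud\ is genuinely needed. The apparent circularity of invoking $\opr\theta$ to prove $\theta$ is exactly what \sL\ absorbs, so, in contrast with the classical Sambin--Valentini argument, no fixed-point construction or proof-theoretic sequent analysis is required. Combining with Theorem~\ref{josabe} moreover shows that $\chi_r\top$ is \emph{the} guarded fixed point of $\chi_r$ over \sysunus.
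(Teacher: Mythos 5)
Your proof is correct: \Sut\ for the direction $\chi_r\top \to \chi_r\chi_r\top$ and \Sud\ (available since \C\ gives \quat\ over \ia) for the converse are exactly the right substitution principles, and the single application of \sL\ to $\theta := (\chi_r\top \iff \chi_r\chi_r\top)$ legitimately absorbs the circularity. The paper uses the same ingredients but applies \sL\ at a different formula: it never forms $\opr\theta$, instead proving each direction outright. For the nontrivial direction it derives, from $\chi_r\chi_r\top$ alone, the implication $\opr\chi_r\top \to \chi_r\top$ (from $\opr\chi_r\top$ normality gives $\opr(\top\iff\chi_r\top)$, and \Sud\ then yields $\chi_r\chi_r\top \iff \chi_r\top$), and closes immediately with the instance $(\opr\chi_r\top \to \chi_r\top)\to\chi_r\top$ of \sL. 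That route is slightly leaner than yours: it needs neither the hypothesis $\opr\theta$, nor the extra \C\ step turning $\chi_r\chi_r\top$ into $\opr\chi_r\chi_r\top$, nor the unboxing of $\opr\theta$. What your decomposition buys is a uniform template --- establish $\opr\theta\to\theta$ and apply L\"ob once at the top --- which makes explicit that \TF\ is, modulo substitution reasoning, literally one instance of \sL; the paper's version instead exhibits $\chi_r\top$ itself as the formula receiving the L\"ob step. Your closing observation that Theorem~\ref{josabe} then makes $\chi_r\top$ \emph{the} fixed point is also correct and matches the paper's intent.
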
 

\begin{proof}
We have:
\begin{eqnarray*}
\sysunus \vdash \chi_r\top & \to &  (\chi_r\top \wedge (\top \iff\chi_r\top)) \\
& \to & \chi_r\chi_r \top
\end{eqnarray*}

Conversely,\qedright
\begin{eqnarray*}
\sysunus \vdash \chi_r\chi_r\top& \to & (\opr \chi_r \top \to \chi_r\top) \\
& \to & \chi_r\top 
\end{eqnarray*}
\end{proof}

\noindent
What happens for weaker theories than \sysunus?
We refer the reader to \cite{iemh:prop05} and \cite{lita:lewi19} for more information.

We prove a converse of Theorem~\ref{grotesmurf}.
\begin{theorem}\label{bricolatorsmurf}
$\ia+\TF \vdash \sL$.
\end{theorem}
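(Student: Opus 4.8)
The goal is to derive strong L\"ob $\sL$ from $\ia+\TF$, mirroring the strategy used in Theorem~\ref{sterkesmurf} where $\sU$ was shown to yield $\sL$. The idea is to pick a clever $\chi_r$, modalised in $r$, whose unfolding-fixed-point property $\TF$ (i.e.\ $\chi_r\top \iff \chi_r\chi_r\top$) forces exactly the implication $(\opr\phi\to\phi)\to\phi$ for an arbitrary formula $\phi$. Given the choice $\chi_r := (\phi\vee\opr r)$ that worked in Theorem~\ref{sterkesmurf}, the natural first attempt here is the same $\chi_r$, since $r$ occurs only under $\opr$ (hence under $\tto$), so $\chi_r$ is legitimately modalised in $r$.

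\textbf{Key steps.} First I would compute $\chi_r\top = \phi\vee\opr\top$, which is $\ia$-provably equivalent to $\top$ (since $\opr\top$ is a theorem, being $\top\tto\top$ via \na). Hence $\chi_r\chi_r\top = \phi\vee\opr(\phi\vee\opr\top)$, which in turn is $\ia$-provably equivalent to $\phi\vee\opr\top$, i.e.\ again to $\top$. So with this $\chi_r$ the instance of $\TF$ is trivially true and gives nothing --- this choice fails. The working choice is the one flagged in the Remark right after Theorem~\ref{sterkesmurf}: take $\chi_r := ((\opr r\to\phi)\to\phi)$, again modalised in $r$. Then $\chi_r\top = (\opr\top\to\phi)\to\phi$, which over $\ia$ (using $\vdash\opr\top$) is equivalent to $\phi\to\phi$, hence to $\top$. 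Consequently $\chi_r\chi_r\top = (\opr\chi_r\top\to\phi)\to\phi$; since $\chi_r\top\iff\top$ is a theorem, $\opr\chi_r\top\iff\opr\top$ by extensionality, and $\opr\top$ is a theorem, so $\chi_r\chi_r\top$ is again equivalent to $(\top\to\phi)\to\phi$, i.e.\ to $\top$. Once more $\TF$ degenerates. So I must instead iterate less na\"ively: I would not simplify $\chi_r\top$ prematurely, but rather read $\TF$ as $\chi_r\top\to\chi_r\chi_r\top$ together with its converse, and extract $(\opr\phi\to\phi)\to\phi$ by an $\ia$-calculation analogous to the proof of Theorem~\ref{grotesmurf}. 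Concretely, with $\chi_r := ((\opr r\to\phi)\to\phi)$: from $\chi_r\chi_r\top$ (which equals $(\opr\chi_r\top\to\phi)\to\phi$) and the direction $\chi_r\top\to\chi_r\chi_r\top$ of $\TF$, one should be able to derive $\chi_r\top$, and $\chi_r\top = (\opr\top\to\phi)\to\phi \equiv (\opr\phi\to\phi)\to\phi$ only if $\opr\top$ behaves like $\opr\phi$ --- which it does not in general. This tells me the right $\chi_r$ must make the \emph{inner} $\opr r$ survive as $\opr\phi$ after the first unfolding: take $\chi_r := \opr r\to\phi$, or more likely $\chi_r := ((\opr r\to\phi)\wedge\text{(something pinning }r\text{ to }\phi)$. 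I would follow the pattern of Theorem~\ref{sterkesmurf} exactly: use $\TF$ in the form it takes for the specific $\chi_r$, and massage by pure $\ia$ (equivalently $\iam$, as the authors keep noting) propositional-plus-modal reasoning, exploiting $\vdash\opr\top$, $\ka$, $\tr$, and \na.

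\textbf{Main obstacle.} The crux is choosing $\chi_r$ correctly: it must be modalised in $r$, and its one-step self-substitution $\chi_r\chi_r\top$ must expose an occurrence of $\opr\phi$ (not $\opr\top$) so that the $\TF$-equivalence collapses to $(\opr\phi\to\phi)\to\phi$. The candidate $\chi_r := \opr r\to\phi$ looks most promising: then $\chi_r\top = \opr\top\to\phi$, provably $\phi$ over $\ia$; and $\chi_r\chi_r\top = \opr(\opr\top\to\phi)\to\phi$, which by extensionality (since $\opr\top\to\phi\iff\phi$) equals $\opr\phi\to\phi$. So $\TF$ for this $\chi_r$ reads $\phi\iff(\opr\phi\to\phi)$ modulo $\ia$, whose right-to-left direction is precisely $\sL$. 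I expect the proof to be a three-or-four-line $\ia$-derivation: (i) $\ia\vdash\chi_r\top\iff\phi$; (ii) $\ia\vdash\opr(\chi_r\top)\iff\opr\phi$ by extensionality; (iii) $\ia\vdash\chi_r\chi_r\top\iff(\opr\phi\to\phi)$; (iv) from $\TF$, i.e.\ $\chi_r\top\iff\chi_r\chi_r\top$, conclude $\phi\iff(\opr\phi\to\phi)$, hence $(\opr\phi\to\phi)\to\phi$. The only delicate point to check is that $\chi_r := \opr r\to\phi$ is indeed modalised in $r$ --- it is, since $r$ appears only inside $\opr$, hence inside $\tto$ --- and that all reasoning stays disjunction-free so it goes through over $\iam$ as well, as the authors remark throughout.
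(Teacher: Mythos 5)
Your final argument is exactly the paper's proof: you take $\chi_r := (\opr r \to \phi)$ with $r$ not in $\phi$, apply \TF\ to get $(\opr\top\to\phi)\iff(\opr(\opr\top\to\phi)\to\phi)$, and simplify by \ia-reasoning (using $\vdash\opr\top$ and extensionality) to $\phi\iff(\opr\phi\to\phi)$, whose right-to-left direction is \sL. The preliminary exploration of failed candidates is harmless, and the verification that this $\chi_r$ is modalised in $r$ and that the computation goes through is correct.
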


\noindent
So \sL\ and \TF\ are interderivable over \ia.

\begin{proof}
Suppose $r$ does not occur in $\phi$. Let $\chi_r := (\opr r \to \phi)$.
We have, applying {\TF} to $\chi_r$:
\[ \ia+\TF \vdash (\opr \top \to \phi) \iff (\opr (\opr \top \to \phi)  \to \phi) .\] 
By {\ia} reasoning this gives $\ia+\TF \vdash (\opr\phi \to \phi ) \to \phi$.
\end{proof}

\begin{remark}
We note that, in this argument, the competing fixed point equations for the formulas
$\chi'_r := \opr(r\to \phi)$ and $\chi''_r := r \tto\phi$, where $r$ does not occur in $\phi$, do not deliver the goods.
 This can be seen by noting that the corresponding fixed points can be obtained as a 
 instances of  \TFu, respectively \TFt, where:
\begin{description}
\item[\TFu]
$\psi \top \iff \psi\psi\top$, where $\psi$ is of the form $\opr\nu$.
\item[\TFt]
$\psi \top \iff \psi\psi\top$, where $\psi$ is of the form $\rho\tto \nu$.
\end{description}
One can show that these principles do not yield \sL. One quick way to see this is to note that
 $\ia+\TFu+\Boxa = \ia+\TFt +\Boxa$ and that $\ia+\TFu+\Boxa$ is definitionally equivalent to
 $\igl$, the constructive version of \gl. For more information, see \cite{lita:lewi19}. 
\end{remark} 

We summarise the results of the Sections~\ref{clsl}, \ref{susl} and \ref{tfsl} about \sysunus:

\begin{theorem}
The following are axiomatisations of \sysunus:
\begin{itemize}
\item
$\iA + \sL$, 
\item
$\iA + \C + \weak$, 
\item
$\iA + \C + \lsa{\Lo} $, 
\item
$\iA + \C + \lsb{\Lo}$,
\item
$\ia+\sU$,
\item
$\ia+\TF$.
\end{itemize}
Here, in all cases, \C\ can  be replaced by \lsa{S}.
\end{theorem}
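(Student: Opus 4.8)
The plan is to prove this summary theorem by simply collecting together the equivalences already established in the excerpt, so there is essentially nothing new to do. The statement asserts that six systems all coincide with $\sysunus$, together with the remark that $\C$ can be replaced by $\lsa{S}$ throughout.

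First I would recall that $\sysunus := \iA + \sL$ by definition (Subsection~\ref{clsl}), so that item is immediate. Next, Lemma~\ref{identitysmurf} already gives us the chain of equalities
\[\sysunus = \iA + \sL = \iA + \C + \weak = \iA + \C + \lsa{\Lo} = \iA + \C + \lsb{\Lo},\]
which disposes of the second, third, and fourth items at once. For the fifth item, $\ia + \sU$: Theorem~\ref{josabe} shows $\sysunus \vdash \sU$, hence $\ia + \sU \subseteq \sysunus$ (recalling $\ia = \iA$ and $\sysunus$ extends $\iA$), while Theorem~\ref{sterkesmurf} shows $\ia + \sU \vdash \sL$, hence $\sysunus = \iA + \sL \subseteq \ia + \sU$; together these give $\sysunus = \ia + \sU$. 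For the sixth item, $\ia + \TF$: Theorem~\ref{grotesmurf} gives $\sysunus \vdash \TF$, so $\ia + \TF \subseteq \sysunus$, and Theorem~\ref{bricolatorsmurf} gives $\ia + \TF \vdash \sL$, so $\sysunus \subseteq \ia + \TF$; hence $\sysunus = \ia + \TF$.

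It remains to justify the final clause, that $\C$ may be replaced by $\lsa{S}$ in all items where it occurs (the three systems $\iA + \C + \weak$, $\iA + \C + \lsa{\Lo}$, $\iA + \C + \lsb{\Lo}$). This is exactly Lemma~\ref{minismurf}, which establishes $\iA + \C = \iA + \lsa{S}$; since adding a further axiom to equal logics yields equal logics, each $\iA + \C + X$ equals $\iA + \lsa{S} + X$. I see no genuine obstacle here — the only thing to be careful about is bookkeeping, namely checking that every system named in the statement has in fact been matched to one of the cited results, and that the identifications $\ia = \iA = {\sf HLC}^{\sharp}$ are used consistently so that the inclusions are being compared over the same base. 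The proof is therefore a one-paragraph assembly, which is presumably why the authors would state it as a summary and either omit the proof or leave it to the reader.

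\begin{proof}
All of this has already been shown. By definition $\sysunus = \iA + \sL$. Lemma~\ref{identitysmurf} gives
\[\iA + \sL = \iA + \C + \weak = \iA + \C + \lsa{\Lo} = \iA + \C + \lsb{\Lo}.\]
For $\ia + \sU$: by Theorem~\ref{josabe}, $\sysunus \vdash \sU$, so $\ia + \sU \subseteq \sysunus$; by Theorem~\ref{sterkesmurf}, $\ia + \sU \vdash \sL$, so $\sysunus \subseteq \ia + \sU$. For $\ia + \TF$: by Theorem~\ref{grotesmurf}, $\sysunus \vdash \TF$, so $\ia + \TF \subseteq \sysunus$; by Theorem~\ref{bricolatorsmurf}, $\ia + \TF \vdash \sL$, so $\sysunus \subseteq \ia + \TF$. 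Finally, by Lemma~\ref{minismurf} we have $\iA + \C = \iA + \lsa{S}$, so in each of the three systems in which $\C$ occurs it may be replaced by $\lsa{S}$.
\end{proof}
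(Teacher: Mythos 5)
Your proposal is correct and matches the paper's treatment: the theorem is stated there as a summary of Lemma~\ref{identitysmurf}, Theorems~\ref{josabe}, \ref{sterkesmurf}, \ref{grotesmurf} and \ref{bricolatorsmurf}, together with Lemma~\ref{minismurf} for the interchangeability of $\C$ and $\lsa{S}$, with no separate proof given. Your assembly of these results, including the two-directional inclusions for $\ia+\sU$ and $\ia+\TF$, is exactly the intended argument.
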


\subsection{Primitive Fixed Points}\label{fpsl}\label{tfsl}
In Subsection~\ref{efp}, we proved a converse, Theorem~\ref{bricolatorsmurf}, of our
version of the de Jongh-Sambin Theorem, i.e. Theorem~\ref{grotesmurf}. Here we used
the specific explicit form of the fixed points. What happens if, instead, the fixed points are primitively added,
so that we cannot make use of information on what the fixed point looks like?
In that case, we also have a converse, but we do need \C\ in order to derive \sL.\footnote{That \C\ is really needed can be seen from the
fact that (i) \sL\ gives \C\ over \ia, and (ii) the possibility to extend finite tree-models of \ia\ with fixed points.}

We study the addition of primitive fixed points for modalised formulas to $\ia+\C$. We show that the added fixed points
give us the Strong L\"ob Principle. The reasoning is a minor adaptation of L\"ob's classical proof in \cite{loeb:solu55}.

We need to define the notion of a fixed point logic for explicit guarded fixed points. There are three (main) options (as far as we can see). 
We can take each fixed point just a constant.
If we do that however, we do lose the substitution property, so the resulting system is not really a logic. 
Secondly, we can add a fixed point operator that binds variables. 
The advantage of the variable-binding approach is that we can make visible that taking fixed points commutes with substitution.
This seems to be an important insight. In the case at hand, where we study Strong L\"ob, this insight is immediate. It also holds
more generally, for weaker forms of L\"ob's Principle, but in those cases to check it is more delicate. 
Thirdly we can develop the theory on cyclic syntax. (This third option was explored for the case of classical provability logic without
the principle 4 in \cite{viss:cycl21}.)

In the present paper, we will follow a middle road related to the second option: we add a fixed point operator that can only bind one designated
variable that we write as $\ast$. To study the addition of a variable binding operator in the full sense,
one needs to develop some machinery that is beyond the scope of the present paper. We hope to return to these matters in
a subsequent paper.

We prove \sL\ using the addition of primitive fixed points as constants. 
We define system $\iaff$. The language of \iaff\ is given by $\phi$ in the following grammar:
{\small
\begin{itemize}
\item
$\pi ::= p_0 \mid p_1\mid p_2 \dots$, 
\item
$\phi ::=  \pi \mid \top \mid \bot \mid \fip\chi \mid (\phi \wedge \phi) \mid   (\phi \vee \phi) \mid (\phi\to\phi) \mid (\phi\tto \phi)$,
\item
$\psi ::=  \ast \mid \pi \mid \top \mid \bot \mid \fip\chi \mid (\psi \wedge \psi) \mid   (\psi \vee \psi) \mid (\psi\to\psi) \mid (\psi\tto \psi) $,
\item
$\chi ::= \pi \mid \top \mid \bot \mid \fip\chi  \mid (\chi \wedge \chi) \mid (\chi \vee \chi) \mid (\chi \to \chi) \mid (\psi \tto \psi)$.
\end{itemize}
}

\noindent
We note that $\ast$ is supposed to be bound by the lowest $\fip{}$ that is above it in the parse tree.
 We are allowed to substitute $\phi$-formulas in $\phi$-, $\psi$ and $\chi$-formulas in the usual way.  

The reasoning system of \iaff\ is given as follows.
\begin{itemize}
\item We have the axioms of \ia\ extended to the new language. 
\item We add as axioms
 fixed point equations: $\fip\chi \iff \chi \fip\chi$, where $\ast$ functions as the variable of substitution.
\item We add necessitation for the extended system.
\end{itemize}
 
 \begin{theorem}\label{slsmurf}
$\iaff+{\C} \vdash \sL$
\end{theorem}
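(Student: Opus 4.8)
The plan is to mimic L\"ob's original argument, now with a genuine primitive fixed point doing the work that the de Jongh--Sambin construction did in Theorem~\ref{bricolatorsmurf}, and to see exactly where the extra hypothesis \C\ is used. Fix a formula $\phi$ in which $\ast$ does not occur, and apply the fixed-point machinery to the formula $\chi := (\opr\ast \to \phi)$, which is a legitimate $\chi$-formula of \iaff\ since $\ast$ occurs only guarded (in the scope of $\opr$, hence of $\tto$). Let $f := \fip\chi$, so that by the fixed-point axiom we have in \iaff:
\[
f \iff (\opr f \to \phi).
\]
This is the analogue of the equivalence $(\opr\top\to\phi)\iff(\opr(\opr\top\to\phi)\to\phi)$ from the proof of Theorem~\ref{bricolatorsmurf}, but now $f$ is an opaque constant rather than an explicit term.

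First I would reason, purely in \ia\ extended to the new language, from the L\"ob hypothesis $(\opr\phi\to\phi)$ towards $\phi$. Assume $\opr\phi\to\phi$. From $f \to (\opr f \to \phi)$ and necessitation together with distribution of $\opr$ over implication (available from \ia\ reasoning for the box, as noted after the definition of \iam), one gets $\opr f \to (\opr\opr f \to \opr\phi)$. The obstacle is the usual one in L\"ob-style arguments: to close the loop one needs $\opr f \to \opr\opr f$, i.e. the principle \lsb{4} for this particular box-formula. In the classical/box setting \lsb{4} is derivable from \gl; here it is \emph{not} available from \ia\ alone, and this is precisely the point of the footnote about \C\ being genuinely needed. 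This is where I expect the main difficulty to lie, and where \C\ enters: by \C\ we have $\opr f \to \opr\opr f$ as an instance of $\psi\to\opr\psi$ with $\psi := \opr f$. With that in hand, $\opr f \to \opr\phi$, hence by the L\"ob hypothesis $\opr f \to \phi$, hence by the fixed-point equivalence (the direction $(\opr f\to\phi)\to f$) we get $\opr f \to f$, and then one more application of necessitation and distribution gives $\opr(\opr f \to f) \to \opr f$; combined with $\opr f \to \phi$ and, crucially again, $\C$ (to get $\opr f \to f$ implies $\opr(\opr f \to f)$, or more directly to push $\opr f\to f$ under $\opr$) one derives $\opr f$, whence $\phi$. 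Discharging the assumption yields $(\opr\phi\to\phi)\to\phi$, i.e. \sL.

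In writing this up cleanly I would present it as a short chain of implications in the style already used throughout Section~\ref{sec:fixpoints}, first deriving $\opr f \to \opr\phi$ (the step that consumes \C\ via $\opr f \to \opr\opr f$), then $\opr f \to \phi$, then $\opr f \to f$, then $\opr f$, then $\phi$. I would remark that this is the same calculation as in Theorem~\ref{bricolatorsmurf} read through the lens of the explicit fixed point $\chi_r\top = (\opr\top\to\phi)$ — indeed substituting the explicit solution for $f$ recovers that proof verbatim — the difference being that without the explicit form we cannot shortcut the $\opr f \to \opr\opr f$ step by unfolding, so we must invoke \C. Finally I would note, echoing the footnote, that \C\ is not an artefact of this particular proof: since \sL\ yields \C\ over \ia, any axiomatisation of $\sysunus$ via primitive fixed points over a base weaker than $\ia+\C$ would have to recover \C\ somehow, and the extendability of finite tree models of \ia\ by fixed points shows the bare fixed-point equations do not suffice.
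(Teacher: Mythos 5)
Your proof is correct and essentially reproduces the paper's own argument: the paper reasons in $\iaff+\C$ with the fixed point $\fip{\opr(\ast\to\phi)}$ and explicitly notes that your choice $\fip{(\opr\ast\to\phi)}$ yields an alternative proof, with \C\ used exactly where you invoke it. One remark: your ending is more roundabout than necessary --- once you have $\opr f\to\phi$, the fixed-point equation gives $f$ outright, then \C\ gives $\opr f$ and hence $\phi$, so the detour through $\opr f\to f$ and the (derivable, but not spelled out) lemma $\opr(\opr f\to f)\to\opr f$ can simply be dropped.
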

\begin{proof}
We reason in $\iaff+\C$. Suppose (\dag) $\opr\phi \to \phi$. Let $\ell := \fip{\opr(\ast \to \phi)}$.
Suppose $\ell$. It follows, by {\C}, that $\opr\ell$ and, by the fixed point equation, that $\opr(\ell \to \phi)$. Hence, $\opr\phi$. By
(\dag), we have $\phi$. We cancel our assumption $\ell$, to obtain (\ddag) $\ell \to \phi$. It follows that $\opr(\ell \to \phi)$. Hence,
again by the fixed point equation, $\ell$. Combining this with (\ddag), we find $\phi$.  
\end{proof}

\noindent
It is easy to see that $\ell':= \fip{(\opr \ast \to \phi)}$ gives us an alternative proof.

We note that via Strong L\"ob we have explicit and unique fixed points, so:
 \[ \iaff+\C \vdash \fip \chi \iff \chi\top.\]
 
 \begin{remark}
 We can extend the notion of interpretation, along the lines of \cite{viss:lobs05}, in such 
 a way that one can show that \sysunus\ and $\iaff+\C$ are isomorphic
 in the resulting category.
 \end{remark}

\subsection{Extending The Class of Fixed Points} \label{expansivesmurf}
In the classical case, we can extend the fixed point property to a wider class of formulas. 
See \cite{bent:moda06} and \cite{viss:lobs05}. We extend the idea to \sysunus.

Let us say that $p$ occurs only \emph{semi-positively} in $\phi$ if it all its 
occurences are positive or in the scope of a
$\tto$. We have:

\begin{theorem}
Let $p$ be the variable of substitution.
Suppose $p$ occurs only semi-positively in $\phi$. Then,
$\sysunus \vdash \phi \top \iff  \phi\phi\top$. 
Moreover, $\phi\top$ is the maximal fixed point for $\top$.
\end{theorem}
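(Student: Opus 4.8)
The plan is to generalise the two directions of the de Jongh-Sambin Theorem (Theorem~\ref{grotesmurf}) from the guarded/modalised case to the semi-positive case. First I would record the basic monotonicity fact that underlies everything: if $p$ occurs only semi-positively in $\phi$, then over \sysunus\ (indeed over \ia), from $\alpha \to \beta$ we can derive $\phi\alpha \to \phi\beta$. This is proved by a routine induction on the construction of $\phi$ as a semi-positive formula. The positive occurrences are handled exactly as in ordinary monotonicity arguments for \ipc; the new case is an occurrence inside the scope of a $\tto$, say in a subformula $\rho\tto\nu$, and there one uses \na\ together with \ka, \tr\ and \di\ (all available in \ia) to push the substitution through -- the point being that an arbitrary subformula $\rho\tto\nu$ is, as a whole, a legitimate building block of a modalised formula $\chi_r$ in the sense of Section~\ref{sec:conventions}, so occurrences \emph{inside} a $\tto$ impose no constraint at all. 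In particular every semi-positive $\phi$ is monotone in $p$.

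Granting monotonicity, the first half --- that $\phi\top$ is a fixed point, i.e. $\sysunus\vdash \phi\top \iff \phi\phi\top$ --- goes just as in Theorem~\ref{grotesmurf}. For the forward direction $\phi\top\to\phi\phi\top$: from $\phi\top$ we get $\top \iff \phi\top$ is witnessed on one side trivially and on the other side by the assumption, so by monotonicity (or directly by \Sut, which is available since \C\ holds in \sysunus) we may substitute $\phi\top$ for $\top$ inside $\phi$ and conclude $\phi\phi\top$. For the converse $\phi\phi\top\to\phi\top$: here the guarded-case proof used $\opr\chi_r\top\to\chi_r\top$, which is not literally available because $\phi$ need not be modalised. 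Instead I would argue $\phi\phi\top \to \phi\top$ directly from $\phi\top\to\top$ (trivial) by monotonicity of $\phi$ in $p$. So in fact, once monotonicity is in hand, \emph{both} inclusions between $\phi\top$ and $\phi\phi\top$ are instances of monotonicity applied to the trivially true implications $\phi\top\to\top$ and $\top\to\phi\top$... wait, the latter is not trivial; but $\phi\top \to \phi\phi\top$ is precisely monotonicity applied to $\top \to \phi\top$ only if we already know $\top\to\phi\top$, which we do not. The correct split is: $\phi\top\to\phi\phi\top$ needs the substitution principle \Sut\ with the equivalence $\phi\top \iff \top$ established under the hypothesis $\phi\top$ (so: work under that hypothesis, get $\phi\top\iff\top$, apply \Sut\ inside $\phi$); and $\phi\phi\top\to\phi\top$ is monotonicity from $\phi\top\to\top$. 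This is the same two-step pattern as Theorem~\ref{grotesmurf}, now with \Sut\ and monotonicity standing in for \Sud\ and the $\opr$-elimination.

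For maximality: suppose $\psi$ is any formula (not involving the variable $p$) with $\sysunus\vdash \psi\iff\phi\psi$; I want $\sysunus\vdash\psi\to\phi\top$. Since $\psi\to\top$, monotonicity gives $\phi\psi\to\phi\top$, hence $\psi\to\phi\top$, and iterating is unnecessary --- one step suffices. The main obstacle, such as it is, is getting the monotonicity induction exactly right at the $\to$ case (where semi-positivity of the whole forces the antecedent to contain $p$ only \emph{negatively}, i.e. semi-positively-with-flipped-polarity, so the induction must be stated for both polarities simultaneously in the $\to$-free-of-guards part) and checking that the $\tto$ case genuinely drops all polarity bookkeeping; I would state the induction hypothesis as a simultaneous ``positive/negative'' claim for the $\{\top,\bot,\wedge,\vee,\to\}$-skeleton, with $\tto$-subformulas treated as atoms. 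The rest is bookkeeping already carried out in Theorem~\ref{grotesmurf} and Theorem~\ref{substi1}.
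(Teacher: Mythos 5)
There is a genuine gap, and it sits at the very centre of your argument: the ``basic monotonicity fact'' is false. For $p$ occurring only semi-positively in $\phi$, it is \emph{not} true over \ia\ (nor over \sysunus) that from $\vdash\alpha\to\beta$ one may infer $\vdash\phi\alpha\to\phi\beta$. Take $\phi:=(p\tto\bot)$, $\alpha:=\bot$, $\beta:=\top$: we have $\vdash\bot\to\top$ and $\vdash\bot\tto\bot$, so your rule would yield $\vdash\top\tto\bot$, i.e.\ $\vdash\opr\bot$, which fails in \sysunus\ (refuted at the root of the two-node model $k\sqsubset m$). The slogan that occurrences inside a $\tto$ ``impose no constraint at all'' is correct only for \emph{equivalence} substitution: \Sut\ lets you replace under $\tto$ given $\alpha\iff\beta$, and \Sud\ given $\opr(\alpha\iff\beta)$; a one-directional implication gives you nothing there (antecedent positions are antitone, and even for monotone positions the internalised form needs a box). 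Consequently your one-step derivations of the hard direction $\phi\phi\top\to\phi\top$ (``monotonicity from $\phi\top\to\top$'') and of maximality both invoke a false lemma. A further sanity check that no such shortcut can exist: with $\phi=(p\tto\bot)$ one has $\phi\top=\opr\bot$ and $\phi\phi\top=(\opr\bot\tto\bot)$, and $(\opr\bot\tto\bot)\to\opr\bot$ already fails in \ia+\C\ (one reflexive $\sqsubset$-point), so any correct proof of $\phi\phi\top\to\phi\top$ must use \sL\ somewhere --- your proposed proof of that direction never does.

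The paper's proof is shaped exactly by this obstacle. It splits the occurrences of $p$ into the positive ones and the $\tto$-guarded ones, writing $\widetilde\phi(p_0,p_1)$. Genuine monotonicity is used only for the positive variable $p_0$, to establish the fixed-point property for positive contexts. For the guarded variable $p_1$ one works under the hypothesis $\opr\phi\top$, which yields $\opr(\top\iff\phi\top)$ and hence, via \Sud, allows replacing $\phi\top$ by $\top$ at the guarded occurrences; combining the two gives $\sysunus+\phi\phi\top\vdash\opr\phi\top\to\phi\top$, and only then does Strong L\"ob discharge the boxed hypothesis to give $\phi\top$. Your forward direction ($\phi\top\to\phi\phi\top$ via \Sut\ under the hypothesis $\phi\top$) is fine and agrees with the paper, and your maximality claim is easily repaired the same way --- under the hypothesis $\alpha$ one has $\alpha\iff\top$, so \Sut\ gives $\phi\alpha\to\phi\top$ without any appeal to monotonicity --- but the converse inclusion needs the occurrence-splitting, \Sud, and \sL\ machinery that your proposal omits.
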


\begin{proof}
We note that by \Sut, we have $\sysunus \vdash \psi \top \to  \psi\psi\top$, for any $\psi$.
So, we have this direction, \emph{a fortiori}, for $\phi$.

Now suppose $p$ occurs only positively in $\psi$. We have $\sysunus \vdash \psi \top \to \psi\psi\top$ and,
hence, by positivity, $\sysunus \vdash \psi\psi \top \to \psi\psi\psi\top$. (This step requires an easy induction that we
leave to the reader.)
By \Sut, we find 
$\sysunus \vdash \psi\psi \top \to \psi\top$, and, thus, we have the fixed point property for $\psi$, to wit  $\sysunus \vdash \psi\top \iff \psi\psi \top$.

We partition the occurrences of $p$ in $\phi$ in two groups in such a way that all occurrences
in the first  group are positive and all occurrences in the second group are in the scope of $\tto$.
Let $p_0$ and $p_1$ be fresh variables and let $\widetilde \phi(p_0,p_1)$ be the result of replacing
the occurences of $p$ in the first group by $p_0$ and the occurrences of $p$ in the second group by
$p_1$. We have:
\begin{eqnarray*}
\sysunus +  \phi\phi\top  & \vdash & \opr \phi \top \to \widetilde \phi (\phi\top, \top) \\
& \vdash  &   \opr \phi \top \to \widetilde \phi (\top, \top) \\
& \vdash &  \opr \phi \top \to  \phi ( \top) \\
& \vdash & \phi\top
\end{eqnarray*}

\noindent The second step of the above argument uses the fixed point property for $\widetilde \phi(p_1,\top)$ w.r.t. the variable $p_1$.

To see that $\phi\top$ is maximal, let $\alpha$ be another fixed point.
We have:\qedright
\begin{eqnarray*}
\sysunus + \alpha &  \vdash & (\alpha \iff \top) \wedge \phi\alpha \\
& \vdash & \phi\top 
\end{eqnarray*}
\end{proof}

\begin{remark}
We can also prove the existence of a minimal fixed point using a result of Wim Ruitenburg  \cite{ruit:peri84,ghil:heyt20} or using uniform interpolation.
We can use these results to interpret a version of the constructive  $\mu\nu$-calculus in \sysunus.
\end{remark}


\section{Kripke Completeness and Bounded Bisimulations}\label{ks}


In the present section we introduce Kripke semantics for our systems \sysunus\ and \sysduo. After presenting the basic definitions (Section \ref{badsmurf}), we instantiate the general ``finitary Henkin construction'' \cite{iemh:prov01,iemh:prop05} 
to prove completeness of our systems (Section \ref{fhc}). In Section \ref{bobbysmurf}, we introduce the notion of bounded bisimulation, which will be a main tool to prove uniform interpolation.

\subsection{Basic Definitions} \label{badsmurf}
We treat the Kripke semantics for \sysunus\ and \sysduo. 
We  use $\preceq$ for the  weak partial ordering associated with intuitionistic logic and 
$\prec$ for the corresponding strict ordering. Moreover, we use $\sqsubset$ for the ordering associated to
the Lewis arrow. We employ the following semantics for \sysunus. 

A \emph{frame} $\mathcal F$ is a triple
$\tupel{K,\preceq,\sqsubset}$. Here:
\begin{itemize}
\item
 $K$ is a finite, non-empty set.
 \item
 $\preceq$ is a weak partial order on
$K$.
\item
$\sqsubset$ is an irreflexive relation on $K$. We demand further:
\begin{description}
\item[strong]
$\sqsubset$ is a subrelation of $\preceq$,
\item[${\tto}\hyph\mathbf p$]
If $x \preceq y \sqsubset z$, then $x \sqsubset w$.
\end{description}
\end{itemize}

\noindent
We note that it follows that $\sqsubset$ is transitive. Thus, $\sqsubset$ is a strict partial order.

Let $\mathcal P$ be a set of propositional variables.
A \emph{$\mathcal P$-model} $\mathcal K$ is a quadruple  $\tupel{K,\preceq,\sqsubset,\Vdash_0}$.
 The relation $\Vdash_0$ relates the nodes in $K$ with propositional
variables in $\mathcal P$. We demand that, if $x \preceq y$ and $x\Vdash_0 p$, then $y\Vdash_0 p$.
We define the forcing relation $\Vdash$ for $\mathcal K$ as follows:
\begin{itemize}
\item
$k\Vdash p$ iff $k\Vdash_0 p$;
\item
$k\nVdash \bot$;
\item
$k\Vdash \top$;
\item
$k \Vdash \psi \wedge \chi$ iff $k\Vdash \psi$ and $k\Vdash \chi$;
\item
$k \Vdash \psi \vee \chi$ iff $k\Vdash \psi$ or $k\Vdash \chi$;
\item
$k \Vdash \psi \to \chi$ iff, for all $m\succeq k$, if $m\Vdash \psi$, then $m\Vdash \chi$;
\item
$k \Vdash \psi \tto \chi$ iff, for all $m\sqsupset k$, if $m\Vdash \psi$, then $m\Vdash \chi$.
\end{itemize}
It is easy to verify that we have persistence for $\Vdash$:  if $x \preceq y$ and $x\Vdash \phi$, then $y\Vdash \phi$.
We note that ${\tto}\hyph\mathbf p$ is needed in the induction step for the persistence of $\tto$.

We will often omit the designated set of variables $\mathcal P$. Usually, this is simply
the set of all variables. However, in the proof
of uniform interpolation, it will be convenient to keep track of the variables of the model at hand.
 
We easily verify the soundness of our models for \sysunus.

The principle \Boxa\ corresponds to the following frame condition:
\begin{description}
\item[brilliancy]
Suppose $x \sqsubset y \preceq z$, then $x\sqsubset z$.
\end{description}

\noindent
Thus, brilliant models are sound for \sysduo.

\subsection{The Finitary Henkin Construction} \label{fhc}
In this section, we present  (finitary) Henkin constructions for \sysunus\ and \sysduo.  For those who are familiar with
the Henkin constructions for L\"ob's Logic and for intuitionistic logic, what happens here is rather predictable.
For a more extended presentation, see e.g. \cite{iemh:prov01}. 

\subsubsection{The Henkin Construction for \sysunus}
Let us call a set of formulas $X$ \emph{adequate} if it is closed under subformulas and contains $\bot$ and $\top$.
Clearly, every finite set of formulas has a finite
adequate closure.

Let $X$ be a finite adequate set. 
We construct the Henkin model $\mathbb H_X$ on $X$ as follows. 
Our nodes are the $\varDelta\subseteq X$ that are \emph{$X$-prime}, i.e.,
\begin{enumerate}[a.] 
 \item
 $ \varDelta \nvdash_{\sysunus}\bot$,
 \item
 Suppose $\phi \in X$ and $\varDelta\vdash_{\sysunus} \phi$, then $\phi\in \varDelta$,
 \item
 Suppose $(\psi\vee\chi) \in X$ and $\varDelta\vdash_{\sysunus} (\psi\vee \chi)$, then $\psi\in \varDelta$ or $\chi\in \varDelta$.
\end{enumerate}

\noindent
We note that (a), (b) and (c) are really one principle: if $\varDelta$ proves a finite disjunction in \sysunus, then it contains
one of the disjuncts. Here the empty disjunction is $\bot$. 

We take the Henkin model to be a $\mathcal P$-model, where
$\mathcal P$ is the set of propositional variables in $X$.

We take:
\begin{itemize}
\item
$\varDelta\preceq \varDelta'$ iff $\varDelta \subseteq \varDelta'$;
\item
$\varDelta \sqsubset \varDelta'$ iff $\varDelta \neq \varDelta'$ and, for all $\psi\in X$, if 
$\varDelta \vdash_{\sysunus} \bigwedge \varDelta'\tto\psi$, then $\psi\in \varDelta'$;
\item
$\varDelta\Vdash_0 p$ iff $p\in\varDelta$.
\end{itemize}

\begin{theorem}
$\mathbb H_X$ is a $\mathcal P$-model.
\end{theorem}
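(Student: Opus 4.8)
The plan is to verify the three structural requirements on $\mathbb H_X$: that $\preceq$ is a weak partial order, that $\sqsubset$ is irreflexive and satisfies the two frame conditions \textbf{strong} and ${\tto}\hyph\mathbf p$, and that $\Vdash_0$ is persistent along $\preceq$. The last two items are essentially immediate from the definitions: $\preceq$ is $\subseteq$ restricted to the (finite) set of $X$-prime sets, hence a weak partial order; and $\Vdash_0$ is persistent because $\varDelta \preceq \varDelta'$ means $\varDelta \subseteq \varDelta'$, so $p \in \varDelta$ gives $p \in \varDelta'$. So the real content is the analysis of $\sqsubset$.

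First I would record the two obvious facts about $\sqsubset$: it is irreflexive by fiat (the clause $\varDelta \neq \varDelta'$ is built into the definition), and it implies $\preceq$ (the condition \textbf{strong}). For the latter, suppose $\varDelta \sqsubset \varDelta'$ and let $\psi \in \varDelta$ with $\psi \in X$; I must show $\psi \in \varDelta'$. Since $\psi \in \varDelta$ we have $\varDelta \vdash_{\sysunus} \psi$, and by \C\ (available since \sysunus\ extends $\ia+\C$) we get $\varDelta \vdash_{\sysunus} \opr\psi$, i.e.\ $\varDelta \vdash_{\sysunus} \top \tto \psi$, whence $\varDelta \vdash_{\sysunus} \bigwedge\varDelta' \tto \psi$ by weakening the antecedent using \na\ and \tr. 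The defining clause of $\sqsubset$ then yields $\psi \in \varDelta'$. (Here one uses that $\varDelta'$ is $X$-deductively closed, clause (b).)

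Next, the condition ${\tto}\hyph\mathbf p$: suppose $\varDelta_0 \preceq \varDelta_1 \sqsubset \varDelta_2$; I must show $\varDelta_0 \sqsubset \varDelta_2$. First, $\varDelta_0 \neq \varDelta_2$: since $\varDelta_1 \sqsubset \varDelta_2$ gives $\varDelta_1 \subseteq \varDelta_2$ by \textbf{strong} and $\varDelta_1 \neq \varDelta_2$, there is some $\psi \in \varDelta_2 \setminus \varDelta_1$; as $\varDelta_0 \subseteq \varDelta_1$, that $\psi$ also witnesses $\varDelta_0 \neq \varDelta_2$. For the implication clause, suppose $\varDelta_0 \vdash_{\sysunus} \bigwedge\varDelta_2 \tto \psi$ with $\psi \in X$; since $\varDelta_0 \subseteq \varDelta_1$, also $\varDelta_1 \vdash_{\sysunus} \bigwedge\varDelta_2 \tto \psi$, and $\varDelta_1 \sqsubset \varDelta_2$ then forces $\psi \in \varDelta_2$. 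Thus $\varDelta_0 \sqsubset \varDelta_2$, and in particular $\sqsubset$ is transitive, as noted after the frame definition.

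The main obstacle, such as it is, is that the frame definition also presupposes $K$ is \emph{non-empty}: one needs at least one $X$-prime subset of $X$, which amounts to checking $\emptyset \nvdash_{\sysunus} \bot$ — i.e.\ that \sysunus\ is consistent — and then applying the standard Lindenbaum-style argument (closing under provable consequences within $X$ and, at each disjunction forced by a node, choosing a disjunct) to extend the empty set to an $X$-prime set. This is the one place where a genuine (if routine) construction is invoked; everything else is a matter of unwinding the definitions together with \C, \na, and \tr. I would therefore organize the proof as: (1) consistency and existence of a prime extension, hence $K \neq \emptyset$; (2) $\preceq$ a weak partial order and $\Vdash_0$ persistent, both trivial; (3) irreflexivity of $\sqsubset$ and \textbf{strong}, via \C; (4) ${\tto}\hyph\mathbf p$ and hence transitivity of $\sqsubset$, via monotonicity in the antecedent.
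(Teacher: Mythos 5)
Your proof is correct and takes essentially the same route as the paper's: \textbf{strong} via the Strength/Completeness principle (the paper tacitly uses $\lsa{S}$ where you spell out \C, \na\ and \tr), irreflexivity by definition, and ${\tto}\hyph\mathbf p$ by monotonicity of the defining clause in the left argument; your extra checks of non-emptiness, finiteness and persistence are routine points the paper leaves tacit. (One small nit: in the \textbf{strong} argument the conclusion $\psi\in\varDelta'$ follows directly from the defining clause of $\sqsubset$, not from clause (b) of $X$-primeness.)
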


\begin{proof}
Clearly, $\preceq$ is a weak partial ordering. Suppose $\varDelta\sqsubset \varDelta'$ and $\phi\in \varDelta$.
Then, $\varDelta \vdash_{\sysunus} \bigwedge \varDelta'\tto \phi$. Ergo, $\phi \in \varDelta'$.
So, $\Delta \subseteq \Delta'$ and, thus, we have strength.

Clearly, $\sqsubset$ is irreflexive. Suppose $\varDelta \preceq \varDelta' \sqsubset\varDelta''$.
It follows, by irreflexivity and strength, that $\varDelta \subseteq \varDelta' \subset \varDelta''$.
Ergo, $\varDelta \neq \varDelta''$.  Suppose that $\phi\in X$ and
 $\varDelta \vdash_{\sysunus} \bigwedge\varDelta'' \tto \phi$. Since $\varDelta\subseteq \varDelta'$,
 we find $\varDelta' \vdash_{\sysunus} \bigwedge\varDelta'' \tto \phi$, and, hence,
 $\phi\in\varDelta''$. We may conclude that $\varDelta\sqsubset \varDelta''$.
 \end{proof}
 
 \begin{theorem}
Let $\phi\in X$. We have
$\varDelta \Vdash \phi$ iff $\phi\in \varDelta$.
\end{theorem}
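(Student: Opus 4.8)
The plan is to prove the Truth Lemma by induction on the structure of $\phi \in X$, establishing $\varDelta \Vdash \phi \iff \phi \in \varDelta$ for every $X$-prime $\varDelta$. The atomic, $\top$, $\bot$ cases are immediate from the definitions, and the $\wedge$, $\vee$, $\to$ cases run exactly as in the standard Henkin construction for intuitionistic propositional logic, using the characterization that $\varDelta$ contains every disjunction in $X$ it proves. For the $\to$ case, the left-to-right direction is straightforward from the induction hypothesis and adequacy; the right-to-left direction uses the standard Lindenbaum-style argument: if $\varDelta \nvdash_{\sysunus} \psi \to \chi$ with $\psi \to \chi \in X$, then $\varDelta \cup \{\psi\} \nvdash_{\sysunus} \chi$, so by a prime extension lemma (relative to $X$) there is an $X$-prime $\varDelta' \supseteq \varDelta$ with $\psi \in \varDelta'$, $\chi \notin \varDelta'$; apply the induction hypothesis.

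The only genuinely new case is $\phi = (\psi \tto \chi)$. For the forward direction, suppose $(\psi\tto\chi) \in \varDelta$ and let $\varDelta' \sqsupset \varDelta$ with $\varDelta' \Vdash \psi$; I must show $\varDelta' \Vdash \chi$. By the induction hypothesis $\psi \in \varDelta'$ (noting $\psi \in X$ by adequacy). Since $\varDelta' \vdash_{\sysunus} \psi \tto \chi$ follows from $\varDelta' \supseteq \varDelta$ and $(\psi\tto\chi)\in\varDelta$, together with $\psi\in\varDelta'$ we get $\varDelta' \vdash_{\sysunus} \bigwedge\varDelta'\tto\chi$ — here one uses $\ka$ and $\na$ to combine $\bigwedge\varDelta' \to \psi$ with $\psi \tto \chi$. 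Wait: more directly, from $\psi \in \varDelta'$ we have $\bigwedge\varDelta' \vdash \psi$, hence $\na$ gives $\bigwedge\varDelta' \tto \psi$; combined with $\psi\tto\chi$ and $\tr$ this yields $\bigwedge\varDelta'\tto\chi$, so by the definition of $\sqsubset$ (applied with $\varDelta'$ in both roles — note $\varDelta'\sqsubset\varDelta'$ is false, so one should instead argue via the $\sqsubset$-witness) $\chi \in \varDelta'$. The cleanest route: since $\varDelta \sqsubset \varDelta'$ and $\varDelta \vdash_{\sysunus} \bigwedge\varDelta'\tto\chi$ (derived from $(\psi\tto\chi)\in\varDelta$, $\bigwedge\varDelta' \to \psi$, and $\tr$), the defining clause of $\sqsubset$ gives $\chi\in\varDelta'$, hence $\varDelta'\Vdash\chi$ by the induction hypothesis.

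For the backward direction of the $\tto$ case, I argue contrapositively: suppose $(\psi\tto\chi) \in X$ but $(\psi\tto\chi) \notin \varDelta$; I must construct $\varDelta' \sqsupset \varDelta$ with $\varDelta' \Vdash \psi$ and $\varDelta' \nVdash \chi$. The key is to find a finite set $Y \subseteq X$ such that $\varDelta \nvdash_{\sysunus} \bigwedge Y \tto \chi$, $\psi \in Y$, and $Y$ is ``$\sqsubset$-coherent'' over $\varDelta$, then extend to an $X$-prime $\varDelta' \supseteq Y$ that still omits $\chi$ and with $\varDelta \sqsubset \varDelta'$; this is where Strong L\"ob enters, exactly as in the Henkin proof for $\igl$/\sysunus. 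Concretely, one shows that the set $\{\xi : \varDelta \vdash_{\sysunus} \bigwedge\varDelta' \tto \xi\} \cup \{\Boxa\text{-type closure conditions}\}$ together with $\psi$ and $\neg\chi$ is consistent, using that $\sL$ rules out the problematic ``loop'' $\varDelta \sqsubset \varDelta$; then a prime extension relative to $X$ yields $\varDelta'$. One must check $\varDelta \sqsubset \varDelta'$: that $\varDelta \neq \varDelta'$ since $\chi$ (or some formula) separates them, and the defining implication holds by construction; and that $\psi \in \varDelta'$, $\chi \notin \varDelta'$, so by the induction hypothesis $\varDelta'\Vdash\psi$ and $\varDelta'\nVdash\chi$, giving $\varDelta\nVdash(\psi\tto\chi)$.

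The main obstacle is the backward direction of the $\tto$ clause: correctly identifying the consistent theory to Lindenbaum-extend so that the resulting prime set $\varDelta'$ genuinely satisfies $\varDelta\sqsubset\varDelta'$ in the sense of the Henkin definition, while omitting $\chi$ and containing $\psi$. This requires invoking $\sL$ (Strong L\"ob) in the form that guarantees one can ``climb'' along $\sqsubset$ without the relation being reflexive — the same mechanism that makes the finitary Henkin construction work for L\"ob logics — together with careful bookkeeping of which instances of $\tto$-formulas from $X$ must be forced or refuted at the new node. All the non-modal steps and the forward $\tto$ direction are routine given $\tr$, $\ka$, $\na$, and the prime extension lemma for \ipc\ relativized to the adequate set $X$.
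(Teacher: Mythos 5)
The routine cases and the left-to-right direction for $\tto$ in your proposal are fine and match the paper's argument (from $(\psi\tto\chi)\in\varDelta$ and $\psi\in\varDelta'$, derive $\varDelta\vdash_{\sysunus}\bigwedge\varDelta'\tto\chi$ via \na\ and \tr, then apply the defining clause of $\sqsubset$). But the right-to-left direction for $\tto$ --- which you yourself flag as ``the main obstacle'' --- is left genuinely unresolved, and the sketch you give of how to resolve it is confused. You describe the theory to be extended as $\{\xi : \varDelta\vdash\bigwedge\varDelta'\tto\xi\}$, which refers to $\varDelta'$ before it has been constructed; you invoke ``$\Boxa$-type closure conditions'', which belong to the Henkin construction for \sysduo\ (where $\sqsubset$ is defined by passing from $\nu\tto\rho$ to $\nu\to\rho$), not to the \sysunus\ construction at hand; you propose adding $\neg\chi$, which is both unnecessary and too strong intuitionistically; and you appeal to a generic ``prime extension lemma'', which cannot work here, because the relevant notion of consistency is not $\varDelta'\nvdash\chi$ but the nonstandard condition $\varDelta\nvdash_{\sysunus}\bigwedge\varDelta'\tto\chi$, and a generic prime extension gives no reason why the resulting set should satisfy the defining clause of $\sqsubset$ (``every $\xi\in X$ with $\varDelta\vdash\bigwedge\varDelta'\tto\xi$ lies in $\varDelta'$'').

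The missing ideas are precisely the ones the paper uses. First, since $(\psi\tto\chi)\in X\setminus\varDelta$, primeness gives $\varDelta\nvdash_{\sysunus}\psi\tto\chi$, and then the principle \weak\ (derivable in \sysunus; this is where Strong L\"ob actually enters, not via ``ruling out the loop $\varDelta\sqsubset\varDelta$'', since irreflexivity is built into the definition) yields $\varDelta\nvdash_{\sysunus}(\psi\wedge\opr\chi)\tto\chi$, hence $\varDelta\nvdash_{\sysunus}(\psi\wedge(\psi\tto\chi))\tto\chi$. This licenses seeding the new node with $\varDelta'_0:=\{\psi,\psi\tto\chi\}$; the seed $\psi\tto\chi$ is what guarantees $\varDelta\neq\varDelta'$ at the end. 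Second, one does not Lindenbaum-extend abstractly but saturates greedily through an enumeration of $X$, adding $\phi_n$ exactly when $\varDelta\nvdash_{\sysunus}(\phi_n\wedge\bigwedge\varDelta'_n)\tto\chi$; the resulting maximality property ($\phi\in\varDelta'$ iff $\varDelta\nvdash_{\sysunus}(\phi\wedge\bigwedge\varDelta')\tto\chi$) is what simultaneously gives $\chi\notin\varDelta'$, the defining clause of $\sqsubset$, and --- using the axiom \Di\ to split disjunctions under $\tto$ --- the $X$-primeness of $\varDelta'$. None of this bookkeeping appears in your proposal, so as it stands the crucial half of the crucial case is a gap rather than a proof.
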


\begin{proof}
 The proof is by induction of $\phi$.
There are just two non-trival cases: the case of implication and the case of the Lewis arrow.
The case of implication is the same as in the proof of the analogous result for intuitionistic propositional logic. We treat the case of
$\tto$.

Suppose $(\psi\tto\chi) \in \varDelta$ and $\varDelta\sqsubset\varDelta'$ and $\varDelta'\Vdash \psi$.
By the induction hypothesis, we have $\psi\in \varDelta'$. So, it follows that $\varDelta \vdash_{\sysunus}  \bigwedge\varDelta'\tto \chi$.
Hence, by the definition of $\sqsubset$, we find that $\chi\in \varDelta'$. Again, by the induction hypothesis,
we have $\varDelta'\Vdash \chi$.

Suppose $(\psi\tto\chi) \in X$ and $(\psi\tto\chi) \not\in \varDelta$. Since,
we have ${\sf W}_{\tt a}$ in \sysunus, it follows that  $\varDelta \nvdash_{\sysunus} (\psi\wedge \opr\chi)\tto\chi$, and,
hence, \emph{a fortiori}, $\varDelta \nvdash_{\sysunus}(\psi\wedge (\psi\tto\chi))\tto\chi$.
We fix some enumeration $\phi_0,\dots \phi_{N-1}$ of $X$.
Let $\varDelta'_0 := \verz{\psi,(\psi \tto \chi)}$. Suppose we have constructed $\varDelta'_n$, for $n< N$.
We define:
\[ 
\varDelta'_{n+1} := \left \{ \begin{array}{ll}
\varDelta'_n \cup\verz{\phi_n} & \text{if $\varDelta \nvdash_{\sysunus} (\phi_n \wedge\bigwedge \varDelta'_n) \tto \chi$} \\
\varDelta_n' & \text{otherwise}
\end{array} \right.
\]

\noindent
We take $\varDelta' := \varDelta'_N$. By induction we find that $\varDelta\nvdash_{\sysunus} \bigwedge\varDelta'\tto \chi$.

Suppose $\phi\in X$. We claim that (\dag) $\phi \in \varDelta'$ iff $\varDelta\nvdash_{\sysunus} (\phi\wedge\bigwedge\varDelta')\tto \chi$.
the left-to-right direction is trivial. Suppose $\varDelta\nvdash_{\sysunus} (\phi\wedge\bigwedge\varDelta')\tto \chi$.
Let $\phi = \phi_i$.  We find that  $\varDelta\nvdash_{\sysunus} (\phi\wedge\bigwedge\varDelta'_i)\tto \chi$.
So $\phi=\phi_i\in\varDelta'_{i+1}\subseteq \varDelta'$.

We verify that $\varDelta'$ is $X$-prime. We just treat case (c), the other cases being simpler.
Suppose $ (\sigma \vee \tau)\in X$ and $\varDelta'\vdash (\sigma\vee\tau)$.
 We note that, by \ia-reasoning, we have:
 \[ \varDelta \vdash_{\sysunus} (((\sigma\vee \tau) \wedge \bigwedge\varDelta')\tto\chi) \iff
 (((\sigma \wedge \bigwedge\varDelta')\tto\chi) \wedge ((\tau \wedge \bigwedge\varDelta')\tto\chi)).\]
 It follows that $\varDelta\nvdash_{\sysunus} \bigwedge((\sigma\vee\tau)\wedge\varDelta')\tto\chi$ iff 
   $\varDelta\nvdash_{\sysunus} (\sigma\wedge \bigwedge\varDelta')\tto\chi$ or $\varDelta\nvdash_{\sysunus} (\tau \wedge \bigwedge\varDelta')\tto\chi$.
  Since $\varDelta'\vdash (\sigma\vee\tau)$, we find that $\varDelta\nvdash_{\sysunus} \bigwedge((\sigma\vee\tau)\wedge\varDelta')\tto\chi$
  and, hence, we have, by (\dag), $\sigma\in \varDelta'$ or $\tau\in \varDelta'$.

We show that $\varDelta \sqsubset \varDelta'$. First $\varDelta \neq \varDelta'$, since $(\psi\tto\chi) \not\in \varDelta$ and  $(\psi\tto\chi)\in \varDelta'$.
Secondly, suppose $\varDelta \vdash_{\sysunus} \bigwedge \varDelta' \tto \nu$. By (\dag), we find $\nu\in\varDelta'$.
 
 We have $\psi\in\varDelta'$ and $\chi\not\in\varDelta'$. Hence, by the induction hypothesis, $\varDelta'\Vdash \psi$ and
 $\varDelta'\nVdash \chi$, and we are done.
 \end{proof}

\noindent
We give an important theorem that is the obvious analogue of a similar fact about Intuitionistic Propositional Logic.

\begin{theorem}\label{priyes}
Let $X$ be a finite adequate set.
Suppose $\varDelta$ is $X$-prime. Then, $\varDelta$ has the disjunction property, i.e., whenever
$\varDelta \vdash_{\sysunus} \phi \vee \psi$, then $\varDelta\vdash_{\sysunus}\phi$ or $\varDelta\vdash_{\sysunus} \psi$.
\end{theorem}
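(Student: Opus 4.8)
The plan is to reduce Theorem~\ref{priyes} to the already-established Henkin model characterisation, namely that for $\phi \in X$ we have $\varDelta \Vdash \phi$ iff $\phi \in \varDelta$. First I would observe that the disjunction property as stated concerns arbitrary $\phi,\psi$, not just formulas in $X$, so the first move is to pass to a suitable adequate superset. Given $\varDelta$ that is $X$-prime and a derivation $\varDelta \vdash_{\sysunus} \phi \vee \psi$, let $Y$ be a finite adequate set containing $X \cup \{\phi,\psi,\phi\vee\psi\}$ (its existence is noted in the text). The subtlety is that $\varDelta$, while $X$-prime, need not be $Y$-prime, so I cannot directly apply the truth lemma in $\mathbb H_Y$ to $\varDelta$ itself.

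To get around this, the key step is a Lindenbaum-style extension: since $\varDelta \nvdash_{\sysunus} \bot$ and $\varDelta \subseteq X \subseteq Y$, one extends $\varDelta$ to a $Y$-prime set $\varDelta^\star$ with $\varDelta \subseteq \varDelta^\star$ and, crucially, such that $\varDelta^\star$ decides membership of the old formulas in $X$ in the same way — i.e.\ for $\theta \in X$, $\theta \in \varDelta^\star$ iff $\varDelta \vdash_{\sysunus} \theta$ iff $\theta \in \varDelta$ (the last equivalence being $X$-primeness of $\varDelta$). This follows from the standard prime-extension lemma for \ipc-based logics applied with the closure condition built in; concretely, enumerate $Y$ and greedily add formulas maintaining consistency-relative-to-$\varDelta$, exactly as in the arrow-saturation construction already carried out in the previous proof. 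Then $\varDelta^\star$ is a node of $\mathbb H_Y$, and by the truth lemma for $\mathbb H_Y$, for every $\sigma \in Y$ we have $\varDelta^\star \Vdash \sigma$ iff $\sigma \in \varDelta^\star$.

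Now I would run the usual argument. Since $\varDelta \subseteq \varDelta^\star$ and $\varDelta \vdash_{\sysunus} \phi\vee\psi$, we get $\varDelta^\star \vdash_{\sysunus} \phi \vee \psi$, and as $(\phi\vee\psi) \in Y$ with $\varDelta^\star$ being $Y$-prime, $\varDelta^\star \vdash_{\sysunus} \phi\vee\psi$ forces $(\phi\vee\psi)\in\varDelta^\star$, whence $\phi \in \varDelta^\star$ or $\psi \in \varDelta^\star$; say $\phi \in \varDelta^\star$. By the truth lemma in $\mathbb H_Y$, $\varDelta^\star \Vdash \phi$. Then completeness (soundness of $\mathbb H_Y$ for \sysunus, already verified) does \emph{not} immediately give $\varDelta \vdash_{\sysunus} \phi$; instead I would argue directly: $\phi \in \varDelta^\star$ together with the property that $\varDelta^\star$ decides $X$-formulas exactly as $\varDelta$ does is only useful if $\phi \in X$. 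So the cleanest route is actually to take $Y \supseteq X \cup \{\phi, \psi, \phi \vee \psi\}$ but then note that what we really want, $\varDelta \vdash_{\sysunus}\phi$ or $\varDelta\vdash_{\sysunus}\psi$, should be extracted before extending: alternatively, choose $\varDelta^\star$ to be a $Y$-prime extension of $\varDelta \cup \{\neg\phi\}$ when $\varDelta \nvdash \phi$ and of $\varDelta \cup\{\neg\psi\}$ when $\varDelta\nvdash\psi$, and derive a contradiction with $\varDelta \vdash \phi\vee\psi$ inside $\mathbb H_Y$.

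Thus the final structure I would write is: assume $\varDelta \nvdash_{\sysunus} \phi$ and $\varDelta \nvdash_{\sysunus}\psi$; then $\varDelta\cup\{\neg\phi\}$ and, separately, extend toward a single prime set refuting both, or rather build a $Y$-prime $\varDelta^\star \supseteq \varDelta$ with $\phi\notin\varDelta^\star$ (possible since $\varDelta\nvdash\phi$ and $\phi\in Y$, using that $\varDelta$ is consistent and the extension lemma can avoid any single non-derivable formula), and likewise we may simultaneously arrange $\psi\notin\varDelta^\star$ provided $\varDelta\nvdash_{\sysunus}\phi\vee\psi$ — but here is exactly where the hypothesis $\varDelta\vdash_{\sysunus}\phi\vee\psi$ bites: we cannot avoid both, because $\varDelta^\star$ being $Y$-prime and containing $\varDelta$ forces $\phi\vee\psi\in\varDelta^\star$ hence one of $\phi,\psi$. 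So $\phi\in\varDelta^\star$, say, giving $\varDelta^\star\Vdash\phi$ in $\mathbb H_Y$; but the node $\varDelta$ itself sits below $\varDelta^\star$ in $\preceq$, and we chose the extension to keep $\phi\notin\varDelta^\star$ — contradiction. The main obstacle, and the one requiring care in the write-up, is precisely this prime-extension lemma keeping the model coherent: one must confirm that adding formulas greedily does not destroy the clauses (a)--(c) of $X$-primeness relativised to $Y$, which is routine given that \sysunus\ extends \ipc\ and is closed under the rules, and that avoiding one fixed non-derivable formula is compatible with reaching a prime set. I would state this extension lemma explicitly (or cite the analogous \ipc\ fact together with the arrow-saturation argument from the preceding proof) and then the disjunction property is a two-line consequence.
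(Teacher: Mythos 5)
There is a genuine gap, and it sits exactly at the point where you announce the contradiction. Your legitimate move is this: since $\varDelta \nvdash_{\sysunus} \phi$, a standard saturation argument produces a $Y$-prime $\varDelta^\star \supseteq \varDelta$ with $\phi \notin \varDelta^\star$. But then $Y$-primeness together with $\varDelta \vdash_{\sysunus} \phi\vee\psi$ simply forces $\psi \in \varDelta^\star$, and nothing whatsoever is wrong with that: you only assumed $\varDelta \nvdash_{\sysunus} \psi$, which in no way prevents a prime \emph{extension} of $\varDelta$ from proving $\psi$. You acknowledge that one cannot arrange $\phi \notin \varDelta^\star$ and $\psi \notin \varDelta^\star$ simultaneously, but that impossibility is not in conflict with your hypotheses; it would be a contradiction only if you had the lemma ``if $\varDelta \nvdash_{\sysunus}\phi$ and $\varDelta\nvdash_{\sysunus}\psi$, then some $Y$-prime extension of $\varDelta$ omits both'', and that lemma \emph{is} the disjunction property you are trying to prove. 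Indeed it is false for arbitrary consistent sets (the deductive closure of $p\vee q$ proves $p\vee q$ but neither disjunct, and no prime extension omits both $p$ and $q$), so any correct proof must exploit the $X$-primeness of $\varDelta$ in an essential way; your Lindenbaum-style argument never does. The underlying disconnect is that membership of $\phi$ in a prime extension $\varDelta^\star$ (equivalently, forcing at that node of $\mathbb H_Y$) does not reflect derivability from $\varDelta$: prime extensions over-commit on formulas not derivable from $\varDelta$, so no single Henkin node can witness the deductive closure of $\varDelta$. Two side remarks: the variant via $\varDelta\cup\{\neg\phi\}$ is intuitionistically unsound ($\varDelta=\{\neg\neg p\}$, $\phi = p$ shows $\varDelta\nvdash\phi$ does not give consistency of $\varDelta\cup\{\neg\phi\}$), and the ``conservative extension'' claimed in your second paragraph ($\varDelta^\star\cap X = \varDelta$ with $\phi\vee\psi\in Y$) is unjustified and essentially as strong as the theorem itself.

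What is actually needed --- and what the paper does --- is a semantic gluing construction rather than a syntactic extension. From $\varDelta\nvdash_{\sysunus}\phi$ and $\varDelta\nvdash_{\sysunus}\psi$ one obtains (by the completeness just established) rooted models $\mathcal K$ and $\mathcal M$ whose roots force $\varDelta$ but refute $\phi$, respectively $\psi$. One places these, together with the submodel of $\mathbb H_X$ generated by $\varDelta$, above a fresh root $b$, with $b\Vdash p$ iff $p\in\varDelta$, and checks the frame conditions for the new $\sqsubset$. An induction over $X$ --- this is where $X$-primeness, adequacy of $X$, and the Henkin truth lemma do real work, with a separate analysis of the $\to$ and $\tto$ cases at $b$ --- shows that $b$ forces exactly the members of $\varDelta$ among $X$-formulas, hence forces all of $\varDelta$. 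Soundness then gives $b\Vdash\phi\vee\psi$, so $b$ forces one of the disjuncts, contradicting persistence up to the root of $\mathcal K$ or of $\mathcal M$. Your proposal contains no substitute for this construction, so the argument as written does not go through.
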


\begin{proof}
Let $X$ be a finite adequate set and let $\varDelta$ be $X$-prime. 
Suppose $\varDelta \vdash _{\sysunus}\phi\vee\psi$, but $\varDelta\nvdash _{\sysunus}\phi$ and  $\varDelta\nvdash_{\sysunus}\psi$.
Consider a Kripke model $\mathcal K$ with root $k$ such that $k\Vdash \varDelta$ and $k\nVdash \phi$.
Consider a Kripke model $\mathcal M$ with root $m$ such that $m\Vdash \varDelta$ and $m\nVdash \phi$.
Let $\mathcal H$ be the upwards closed submodel of $\mathbb H_X$ with root $\varDelta$.
We add a new root $b$ under (the disjoint sum of) $\mathcal H$, $\mathcal K$ and $\mathcal M$.
This root will be $\preceq$ below every node of the original models. It will be $\sqsubset$-below every
node that is $\sqsubset$-above the root of one of the original models.\footnote{The three models involved
might be models on different sets of propositional variables. We adapt the models to models in all propositional variables 
by the minimal action of reading the $\Vdash_0$ as relations between nodes and all variables.}

We stipulate that $b\Vdash p$ iff $p\in \varDelta$. We show that, for $\chi\in X$, 
we have $b\Vdash \chi$ iff $\chi\in \varDelta$. Suppose
$b\Vdash \chi$. Then, by persistence $\varDelta\Vdash \chi$, and, thus, by
the properties of the Henkin model, $\chi\in \varDelta$. We show by induction
on $\chi\in X$, that, if $\chi\in\varDelta$, then $b\Vdash \chi$.

We treat the case of $\to$.
Suppose $b\nVdash \sigma\to\tau$, where $(\sigma\to\tau)\in X$. In case, $b\Vdash \sigma$,  it
follows that $b\nVdash \tau$, and, hence, by the induction hypothesis that $\sigma\in\varDelta$ and $\tau\not\in\varDelta$.
So, \emph{a fortiori}, $(\sigma\to\tau)\not\in\varDelta$.
If $b\nVdash \sigma$, it follows that, for some $c\succ b$, $c\Vdash \sigma$ and $c\nVdash \tau$.
Clearly, $c\succeq b'$, for some root $b'$ of our original three models. It follows that
$b'\nVdash\sigma\to\tau$, and, since $b'\Vdash\varDelta$, $(\sigma\to\tau)\not\in\varDelta$.

 The case of $\tto$ is similar to the second half of the case of $\to$. Surprisingly, the case
 of $\tto$ does not use the induction hypothesis.

 We may conclude that $b\Vdash \varDelta$, but $b\nVdash \phi$ and
$b\nVdash \psi$. A contradiction.
\end{proof}

\subsubsection{The Henkin Construction for \sysduo}

We extend the notion of subformula to sub${}^+$formula, where we also count
$(\psi \to \chi)$ as a sub${}^+$formula of $(\psi\tto\chi)$.
Let us call a set of formulas $X$ \emph{adequate${}^+$} if it is closed under sub${}^+$formulas and contains $\bot$ and $\top$.
Clearly, every finite set of formulas has a finite
adequate${}^+$ closure.

Let $X$ be a finite adequate${}^+$ set. 
We construct the Henkin model $\mathbb H^+_X$ on $X$ as follows. 
Our nodes are the $\varDelta\subseteq X$ that are \emph{$X$-prime}.
We take:
\begin{itemize}
\item
$\varDelta\preceq \varDelta'$ iff $\varDelta \subseteq \varDelta'$;
\item
$\varDelta \sqsubset \varDelta'$ iff
$\varDelta\subset \varDelta'$ and,
for all $(\nu \tto \rho)\in \varDelta$, we have $(\nu \to\rho)\in \varDelta'$;
\item
$\varDelta\Vdash_0 p$ iff $p\in\varDelta$.
\end{itemize}

\begin{theorem}
$\mathbb H^+_X$ is a $\mathcal P$-model.
\end{theorem}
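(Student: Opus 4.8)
The plan is simply to run through the clauses of the definition of a $\mathcal P$-model from Section~\ref{badsmurf}, which is routine here because on $\mathbb H^+_X$ both relations are defined directly via (proper) set inclusion. First I would observe that $\preceq$, being $\subseteq$ on a family of sets, is automatically a weak partial order; that $K$ is finite, since it consists of subsets of the finite set $X$; that $K$ is non-empty, because any $\Gamma\subseteq X$ with $\Gamma\nvdash_{\sysduo}\bot$---for instance $\Gamma=\verz\top$---extends to an $X$-prime set by the usual prime-extension argument; and that $\Vdash_0$ is persistent, since $\varDelta\preceq\varDelta'$ and $p\in\varDelta$ give $p\in\varDelta'$ directly from $\varDelta\subseteq\varDelta'$.

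It then remains to check the conditions on $\sqsubset$: irreflexivity, \textbf{strong}, ${\tto}\hyph\mathbf p$, and---since we are now targeting \sysduo---\textbf{brilliancy}. Irreflexivity is immediate, as $\varDelta\sqsubset\varDelta'$ requires $\varDelta\subsetneq\varDelta'$; the same observation gives \textbf{strong}, i.e.\ $\varDelta\sqsubset\varDelta'\Rightarrow\varDelta\subseteq\varDelta'\Rightarrow\varDelta\preceq\varDelta'$. For ${\tto}\hyph\mathbf p$ I would take $\varDelta\preceq\varDelta'\sqsubset\varDelta''$: then $\varDelta\subseteq\varDelta'\subsetneq\varDelta''$, so $\varDelta\subsetneq\varDelta''$, and any $(\nu\tto\rho)\in\varDelta$ lies in $\varDelta'$ (as $\varDelta\subseteq\varDelta'$), whence $(\nu\to\rho)\in\varDelta''$ by $\varDelta'\sqsubset\varDelta''$; hence $\varDelta\sqsubset\varDelta''$. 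For \textbf{brilliancy} I would take $\varDelta\sqsubset\varDelta'\preceq\varDelta''$: then $\varDelta\subsetneq\varDelta'\subseteq\varDelta''$, so $\varDelta\subsetneq\varDelta''$, and any $(\nu\tto\rho)\in\varDelta$ yields $(\nu\to\rho)\in\varDelta'\subseteq\varDelta''$; hence again $\varDelta\sqsubset\varDelta''$. Transitivity of $\sqsubset$ then comes for free from \textbf{strong} and ${\tto}\hyph\mathbf p$, exactly as noted after the definition of a frame.

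I do not expect a genuine obstacle. Unlike the \sysunus-case, where $\sqsubset$ was defined through derivability of $\bigwedge\varDelta'\tto\psi$ and one had to detour through provability to obtain \textbf{strong} and ${\tto}\hyph\mathbf p$, the present $\sqsubset$ is a sub-order of proper inclusion, so each clause collapses to a one-line manipulation of $\subseteq$ and $\subsetneq$. The only point deserving a flicker of attention is that the definition of $\sqsubset$ on $\mathbb H^+_X$ asks for \emph{proper} inclusion $\varDelta\subsetneq\varDelta'$ (rather than $\varDelta\neq\varDelta'$ together with $\subseteq$, as in the $\mathbb H_X$ case), and it is exactly keeping track of that properness through the composite situations above that makes $\varDelta\subsetneq\varDelta''$ drop out. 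The genuine interplay of \textbf{brilliancy} with the adequate${}^+$ closure of $X$ only surfaces later, in the truth lemma for $\mathbb H^+_X$, not in this statement.
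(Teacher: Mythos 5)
Your proof is correct; the paper leaves this verification entirely to the reader, and your clause-by-clause check (weak partial order, finiteness, non-emptiness via prime extension, persistence, irreflexivity, \textbf{strong}, ${\tto}\hyph\mathbf p$) is exactly the intended routine argument, with the properness of the inclusion in the definition of $\sqsubset$ handled correctly. Your additional verification of \textbf{brilliancy} goes slightly beyond the literal statement (a $\mathcal P$-model does not require it), but it is precisely what the completeness argument for \sysduo\ needs, so including it is sensible.
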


\noindent
We leave the easy proof to the industrious reader.

\begin{theorem}
Suppose $\phi\in X$.
We have $\phi\in \varDelta$ iff $\varDelta\Vdash \phi$.
\end{theorem}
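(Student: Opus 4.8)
The plan is to prove the truth lemma for $\mathbb H^+_X$ by induction on $\phi \in X$, mimicking the proof of the corresponding result for $\mathbb H_X$ but adapting the two modal clauses to the new definition of $\sqsubset$ and using the enlarged adequacy condition (closure under sub${}^+$formulas). As in the $\sysunus$ case, the propositional connectives are handled exactly as for \ipc, and the implication case is the standard intuitionistic one, so the only genuine work is in the $\tto$-clause. Throughout, I would freely use that $X$-prime sets satisfy the disjunction-in-$X$ property and that, since $X$ is adequate${}^+$, whenever $(\psi\tto\chi)\in X$ we also have $(\psi\to\chi)\in X$.

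For the easy direction of the $\tto$-case, suppose $(\psi\tto\chi)\in\varDelta$ and $\varDelta\sqsubset\varDelta'$ with $\varDelta'\Vdash\psi$; by the induction hypothesis $\psi\in\varDelta'$. From the definition of $\sqsubset$ in $\mathbb H^+_X$ we get $(\psi\to\chi)\in\varDelta'$, hence (since $\varDelta'$ is $X$-prime and closed under provability in $X$) $\chi\in\varDelta'$, so $\varDelta'\Vdash\chi$ by the induction hypothesis. For the hard direction, suppose $(\psi\tto\chi)\in X$ but $(\psi\tto\chi)\notin\varDelta$; I need to build $\varDelta'$ with $\varDelta\sqsubset\varDelta'$, $\psi\in\varDelta'$, $\chi\notin\varDelta'$. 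Here I would use the \Boxa-enabled reasoning: since $\systwo=\sysduo$ proves $\Boxa$, from $(\psi\tto\chi)\notin\varDelta$ one extracts that $\varDelta\nvdash((\bigwedge\{\nu\to\rho : (\nu\tto\rho)\in\varDelta\})\wedge\psi)\to\chi$ — essentially because $\Boxa$ lets one re-internalise the $\tto$-facts of $\varDelta$ as $\to$-facts that the successor must respect. Then I would run the usual stepwise saturation over an enumeration $\phi_0,\dots,\phi_{N-1}$ of $X$, starting from $\varDelta'_0$ containing $\psi$ together with all $(\nu\to\rho)$ for $(\nu\tto\rho)\in\varDelta$, adding $\phi_n$ precisely when doing so keeps $\chi$ underivable (relative to $\varDelta$ and the appropriate $\to$), and set $\varDelta'=\varDelta'_N$. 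One checks the key equivalence ($\phi\in\varDelta'$ iff adding it preserves non-derivability of $\chi$), then $X$-primeness of $\varDelta'$ (the disjunction case via the $\iA$-distribution identity for $\to$ over $\wedge$, exactly as in the $\sysunus$ proof), then $\psi\in\varDelta'$ and $\chi\notin\varDelta'$, and finally that $\varDelta\sqsubset\varDelta'$: we have $\varDelta\subset\varDelta'$ since $(\psi\tto\chi)$ separates them (after noting $\varDelta\subseteq\varDelta'$ because every $\theta\in\varDelta$ survives the construction), and for each $(\nu\tto\rho)\in\varDelta$ we put $(\nu\to\rho)$ into $\varDelta'$ at the outset.

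The main obstacle I expect is the precise form of the "re-internalisation" step: converting $(\psi\tto\chi)\notin\varDelta$ into the right underivability statement involving only $\to$ and the $\to$-shadows of $\varDelta$'s arrow-formulas. In the $\sysunus$ proof this was handled by $\weak$ giving $\varDelta\nvdash(\psi\wedge\opr\chi)\tto\chi$ and then weakening; in the $\sysduo$ setting one instead wants to exploit $\Boxa$ (brilliancy) to pass between $(\cdot\wedge\cdot)\tto\cdot$ and $\cdot\tto(\cdot\to\cdot)$, and one must be careful that the finitely many arrow-formulas of $\varDelta$ that get "downgraded" to implications really do capture the obstruction, i.e. that $\varDelta\nvdash \big(\bigwedge_{(\nu\tto\rho)\in\varDelta}(\nu\to\rho)\wedge\psi\big)\to\chi$. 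Once that underivability is in hand, the rest is a routine replay of the finitary Henkin bookkeeping already carried out for $\mathbb H_X$, including the (mildly surprising) observation that the $\tto$-direction of the truth lemma does not invoke the induction hypothesis on the antecedent.
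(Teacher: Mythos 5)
Your overall strategy is the paper's: induct on $\phi$, treat only the $\tto$-clause, exploit sub${}^+$-closure so that the $\to$-shadows $(\nu\to\rho)$ are available in $X$, and build the successor by finitary saturation; the easy direction is exactly the paper's. But there is a genuine gap in the hard direction, precisely at your closing claim that ``$(\psi\tto\chi)$ separates'' $\varDelta$ from $\varDelta'$. In $\mathbb H^+_X$ the relation $\sqsubset$ demands the \emph{proper} inclusion $\varDelta\subset\varDelta'$, and your only route to properness is $(\psi\tto\chi)\in\varDelta'$. Nothing in your construction secures this: you did not seed $(\psi\tto\chi)$ into $\varDelta'_0$, and your stated underivability $\varDelta\nvdash\bigl(\bigwedge\verz{\nu\to\rho\mid(\nu\tto\rho)\in\varDelta}\wedge\psi\bigr)\to\chi$ says nothing about it. Whether $(\psi\tto\chi)$ survives the saturation depends on what is in $\varDelta$ or enumerated earlier: for instance, an adequate${}^+$ set $X$ may well contain a formula such as $\opr(\psi\to\chi)\to\chi$, which is $\sysduo$-equivalent to $(\psi\tto\chi)\to\chi$; if it sits in $\varDelta$ (or is added first), then $(\psi\tto\chi)$ is blocked and your separation argument collapses. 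One can show that your $\varDelta'$ is nevertheless always a proper extension of $\varDelta$, but that proof is exactly the missing ingredient: if $\varDelta'=\varDelta$, the key equivalence yields $\varDelta\vdash(\psi\tto\chi)\to\chi$, and one must then derive $\varDelta\vdash\psi\tto\chi$ (contradicting $X$-primeness) via $\opr\chi\vdash\psi\tto\chi$, then \lsa{S} to get $\varDelta\vdash(\psi\wedge\opr\chi)\tto\chi$, then \weak. This \weak-step is the paper's opening move (from $(\psi\tto\chi)\notin\varDelta$ infer $\varDelta\nvdash(\psi\wedge\opr\chi)\tto\chi$), and it is absent from your proposal, which appeals only to \Boxa.

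Relatedly, the underivability you do state is correct but left unproven (you flag it as the expected obstacle); its contrapositive goes through with \lsa{S}, the \Boxa-consequence $(\nu\tto\rho)\to\opr(\nu\to\rho)$, \ka\ and \tr, and primeness of $\varDelta$. The clean repair is the paper's: take $\varDelta'_0:=\varDelta\cup\verz{\psi,(\psi\tto\chi)}\cup\verz{(\nu\to\rho)\mid(\nu\tto\rho)\in\varDelta}$, prove the stronger fact $\varDelta\nvdash_{\sysduo}\bigwedge\varDelta'_0\tto\chi$ (using the \weak-step above), conclude $\varDelta'_0\nvdash_{\sysduo}\chi$ by \lsa{S}, and then saturate relative to $\nvdash\chi$; properness is then immediate because $(\psi\tto\chi)\in\varDelta'_0\setminus\varDelta$, and the rest of your bookkeeping (automatic preservation of $\varDelta$, the shadows placed at the outset, no induction hypothesis needed on the antecedent in the $\tto$-direction) is fine.
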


\begin{proof}
We treat  the case of the induction where $\phi = (\psi\tto\chi)\in X$.
Suppose $(\psi\tto\chi) \in \varDelta$ and $\varDelta \sqsubset\varDelta'\Vdash \psi$.
By the induction hypothesis,  $\psi\in\varDelta'$. Since  $\varDelta \sqsubset\varDelta'$,
we have $(\psi\to \chi)\in \varDelta'$. Hence, $\chi\in \varDelta'$. It follows by the induction hypothesis that
$\varDelta' \Vdash \chi$.

Suppose $(\psi\tto\chi) \not\in \varDelta$. It follows that $\varDelta \nvdash_{\sysunus} (\phi \wedge\opr\chi)\tto \chi$, and,
\emph{a fortiori}, $\varDelta \nvdash (\phi \wedge (\psi\tto\chi))\tto \chi$.
Let $\varDelta'_0:= \varDelta\cup\verz{\psi, (\psi\tto\chi)} \cup \verz{(\nu \to \rho)\mid (\nu \tto\rho)\in \varDelta}$.
We can now show that $\varDelta \nvdash_{\sysduo} \bigwedge \varDelta'_0 \tto\chi$. Hence,
$\varDelta_0'\nvdash_{\sysduo} \chi$.
We extend $\varDelta_0'$ in the usual way to a $X$-prime set $\varDelta'$ such that $\chi\not\in \varDelta'$.
One easily sees that $\varDelta\sqsubset \varDelta'$, $\varDelta'\Vdash \psi$, and $\varDelta'\nVdash \chi$.
\end{proof}

\subsubsection{Pre-Henkin Models}
In the proof of uniform interpolation, we will need the notion of pre-Henkin model.
This is simply defined like Henkin model minus the clauses that insure irreflexivity.

Let $X$ be a finite adequate set. For \sysunus, we define the \emph{pre-Henkin model}
 $\mathbb H^\circ_X$ as follows.
 \begin{itemize}
 \item
 The nodes of our model are the $X$-prime subsets $\varDelta$;
\item
$\varDelta\preceq \varDelta'$ iff $\varDelta \subseteq \varDelta'$;
\item
$\varDelta \sqsubset \varDelta'$ iff, for all $\psi\in X$, if 
$\varDelta \vdash_{\sysunus} \bigwedge \varDelta'\tto\psi$, then $\psi\in \varDelta'$;
\item
$\varDelta\Vdash_0 p$ iff $p\in\varDelta$.
\end{itemize}
Inspection shows that we still have strength and ${\tto}\hyph\mathbf p$. Moreover, we still have:
 if $\phi\in X$, then  $\varDelta \Vdash\phi $ iff $\phi\in\varDelta$.

Let $X$ be a finite adequate${}^+$ set. For \sysduo, we define  the pre-Henkin${}^+$ model $\mathbb H^{\circ +}$  by:
\begin{itemize}
 \item
 The nodes of our model are the $X$-prime subsets $\varDelta$;
\item
$\varDelta\preceq \varDelta'$ iff $\varDelta \subseteq \varDelta'$;
\item
$\varDelta \sqsubset \varDelta'$ iff $\varDelta \subseteq \varDelta'$ and, for all $(\psi\tto\chi)\in \varDelta$, we
have $(\psi\to\chi)\in \varDelta'$;
\item
$\varDelta\Vdash_0 p$ iff $p\in\varDelta$.
\end{itemize}
Inspection shows that we still have strength and ${\tto}\hyph\mathbf p$ and brilliance. 
Moreover, if $\phi\in X$, then  $\varDelta \Vdash\phi $ iff $\phi\in\varDelta$.

We will employ the notion ${\sf d}_X(\varDelta)$ of depth in a pre-Henkin model.
This is simply the length of the longest $\prec$-path upwards from $\varDelta$ in 
$\mathbb H^{\circ}_X$. In other words, ${\sf d}_X(\varDelta) := {\sf sup}\verz{{\sf d}_X(\varDelta') \mid \varDelta\prec \varDelta'}$.

\subsection{Bounded Bisimulations} \label{bobbysmurf}
We will use $\pvsa,\pvsb,\pvsc$ to range over finite sets of variables. 
We write ${\sf Sub}^{(+)}(\phi)$ for the set of sub${}^{(+)}$formulas of $\phi$ and we write
${\sf PV}(\phi)$ for the set of proposional variables in $\phi$.

We define the complexity
$\comp{\phi}$ of $\phi$ by recursion on $\phi$ as follows:
\begin{itemize}
\item
$\comp{p} := \comp{\bot} :=\comp{\top} := 0$,
\item
$\comp{\psi\wedge \chi} := \comp{\psi \vee \chi} := {\sf max}(\comp{\psi},\comp{\chi})$,
\item
$\comp{\psi\to \chi} := \comp{\psi\tto \chi} :=  {\sf max}(\comp{\psi},\comp{\chi})+1$,
\end{itemize}

We define:
\begin{itemize}
\item
$\lang(\pvsa\,) := \verz{\phi\in \lang \mid {\sf PV}(\phi) \subseteq \pvsa\,} $;
\item
  $\cocl{n}(\pvsa\,) := \verz{\phi\in \lang(\pvsa) \mid \comp{\phi} \leq n}$.
  \end{itemize}
  The set $\cocl{n}(\pvsa\,)$ is closed under  sub${}^+$formulas. Moreover, it is easy to see that, modulo
  provable equivalence, it is finite. Hence, we can find a finite  adequate${}^+$ set that represents
  $\cocl{n}(\pvsa\,)$ modulo provable equivalence. We will, on occasion, confuse $\cocl{n}(\pvsa\,)$ with such a set of
  representatives.
  
 Let $\omega^+$ be $\omega\cup\verz{\infty}$. We extend the ordering on $\omega$ be stipulating $n<\infty$.
 We take $n+\infty := \infty+n := \infty+\infty := \infty$.
 Consider models $\mathcal K$ with domain $K$ and $\mathcal M$ with domain $M$. \emph{A bounded bisimulation $\mathcal Z$ for $\pvsa$
  between $\mathcal K$ and $\mathcal N$}
 is a ternary relation between $K$, $\omega^+$ and $M$. We write $\bba k\alpha m$ for $\mathcal Z(k,\alpha,m)$.
 We often consider $\mathcal Z$ as an indexed set of binary relations
 $\mathcal Z_\alpha$. 
 Let $\kle$ range over $\preceq$ and $\sqsubset$. 
 We demand:
 \begin{itemize}
 \item
 if $\bba k\alpha m$, then $k\Vdash p$ iff $m\Vdash p$, for all $p\in \pvsa$;
 \item 
  if $\bba k{\alpha+1} m$ and $k\kle k'$, then there is an $m'\gg m$ with $\bba {k'}\alpha {m'}$\\ (zig property);
  \item
   if $\bba k{\alpha+1} m$ and $m\kle m'$, then there is an $k'\gg k$ with $\bba {k'}\alpha{m'}$\\ (zag property).
 \end{itemize}
 
 \noindent
 Note that,  by our conventions, e.g., the zig property for the case $\alpha=\infty$ is:
   if $\bba k{\infty} m$ and $k\kle k'$, then there is an $m'\gg m$ with $\bba {k'}\infty {m'}$.
   Note also that, e.g., the empty set is also a bounded bisimulation.
 
 A bounded bisimulation $\mathcal Z$ is \emph{downward closed} iff, whenever $\alpha \leq \beta$ and $\bba k\beta m$, we have
 $\bba k\alpha m$. Clearly, we can always extend a bounded bisimulation to a downward closed one.
 
 Consider a bounded bisimulation where all triples have the form $\tupel{k,\infty,m}$.
 If we omitt the second components we obtain a binary relation which is \emph{a bisimulation}.
A bisimulation is officially defined as follows. 
 \begin{itemize}
 \item
 if $ k\mathrel{\mathcal B} m$, then $k\Vdash p$ iff $m\Vdash p$, for all $p\in \pvsa$;
 \item 
  if $k \mathrel{\mathcal B} m$ and $k\kle k'$, then there is an $m'\gg m$ with $k'\mathrel{\mathcal B} m'$ (zig property);
  \item
   if $k\mathrel{\mathcal B}m$ and $m\kle m'$, then there is an $k'\gg k$ with $k'\mathrel{\mathcal B} m'$ (zag property).
 \end{itemize}
 
 \noindent
 We note that by replacing all pairs $\tupel{k,m}$ by $\tupel{k,\infty,m}$ we obtain a bounded bisimulation.
 So, bisimulations can be viewed as a subclass of the bounded bisimulations.
 
 It is easy to see that bounded bisimulations and bisimulations for a fixed $\pvsa$ are closed under unions.
 We will write:
 \begin{itemize}
 \item
  $k\simeq_{\pvsa}m$ for: there exists a bisimulation $\mathcal B$ between $\mathcal K$ and
 $\mathcal M$ for $\pvsa$ such that $k\mathrel{\mathcal B} m$;
 \item
 $k\simeq_{\alpha,\pvsa}m$ for: there exists a bounded  bisimulation $\mathcal Z$ between $\mathcal K$ and
 $\mathcal M$ for $\pvsa$ such that $\bba k\alpha m$. 
 \end{itemize}
Clearly, $\simeq_{\pvsa}$ is the maximal bisimulation on $\pvsa$ and $\simeq_{(\cdot),\pvsa}$ is the maximal bounded bisimulation on $\pvsa$.

 \begin{theorem}
 Suppose $\mathcal Z$ is a bounded bisimulation for $\pvsa$ between $\mathcal K$ and $\mathcal M$. 
   We have:
   \begin{enumerate}[a.]
   \item
   if $\bba k\infty m$ and $\phi\in \lang(\pvsa\,)$, then $k\Vdash\phi$ iff $m\Vdash \phi$.
   \item
    if $\bba knm$ and $\phi\in \cocl{n}(\pvsa\,)$, then $k\Vdash\phi$ iff $m\Vdash \phi$.
   \end{enumerate}
   \end{theorem}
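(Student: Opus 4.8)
The plan is to establish part (b) first, by induction on the structure of $\phi$, with the index $n$ and the pair of nodes left universally quantified in the induction hypothesis; part (a) will then follow with almost no extra work. Since $\mathcal K$ and $\mathcal M$ play symmetric roles in the definition of a bounded bisimulation --- the zig property for passing $\mathcal K\to\mathcal M$ is exactly the zag property for passing $\mathcal M\to\mathcal K$, and the atomic clause is two-sided --- it suffices, in each case of the induction, to prove a single implication, say $k\Vdash\phi \To m\Vdash\phi$, under the hypothesis $\bba knm$ with $\comp\phi\le n$.

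For $\phi$ atomic the claim is the first clause in the definition of $\mathcal Z$ (using that ${\sf PV}(\phi)\subseteq\pvsa$), and for $\phi=\top,\bot$ it is immediate. For $\phi=\psi\wedge\chi$ or $\phi=\psi\vee\chi$ we have $\comp\psi,\comp\chi\le\comp\phi\le n$, so $\psi,\chi\in\cocl n(\pvsa\,)$, and the induction hypothesis applied to $\bba knm$ handles both immediate subformulas. The first substantial case is $\phi=\psi\to\chi$ with $\comp\phi\le n$; then $n\ge 1$, write $n=n'+1$, and $\comp\psi,\comp\chi\le n'$, i.e. $\psi,\chi\in\cocl{n'}(\pvsa\,)$. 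Assume $k\Vdash\psi\to\chi$, and let $m'\succeq m$ with $m'\Vdash\psi$; we must show $m'\Vdash\chi$. By the zag property for $\preceq$, applied to $\bba k{n'+1}m$ and $m\preceq m'$, there is $k'\succeq k$ with $\bba{k'}{n'}{m'}$. By the induction hypothesis for $\psi$ at index $n'$ we get $k'\Vdash\psi$; since $k\preceq k'$ and $k\Vdash\psi\to\chi$, the forcing clause for $\to$ at $k$ gives $k'\Vdash\chi$; by the induction hypothesis for $\chi$ at index $n'$ we conclude $m'\Vdash\chi$. The case $\phi=\psi\tto\chi$ is the same argument with $\preceq$ replaced by $\sqsubset$: from $\bba k{n'+1}m$ and an arbitrary $m'\sqsupset m$ with $m'\Vdash\psi$, the zag property for $\sqsubset$ produces $k'\sqsupset k$ with $\bba{k'}{n'}{m'}$, and then the clause for $\tto$ at $k$, together with the induction hypothesis for $\psi$ and for $\chi$, yields $m'\Vdash\chi$. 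This completes the induction and hence the proof of (b).

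For part (a), first extend $\mathcal Z$ to a downward closed bounded bisimulation $\mathcal Z'$ (put $k\mathrel{{\mathcal Z}'_\alpha}m$ iff $\bba k\beta m$ for some $\beta\ge\alpha$; the zig and zag conditions survive because any $\beta\ge\alpha+1$ can be written $\gamma+1$ with $\gamma\ge\alpha$, using the convention $\infty+1=\infty$). If $\bba k\infty m$, then $k\mathrel{{\mathcal Z}'_n}m$ for every $n$; given $\phi\in\lang(\pvsa\,)$, take $n:=\comp\phi$, so $\phi\in\cocl n(\pvsa\,)$, and apply (b) to $\mathcal Z'$ at index $n$. (Alternatively, (a) can be proved directly, by structural induction exactly as in (b), working throughout at the single index $\infty$ and using $\infty+1=\infty$, so that no complexity bookkeeping is needed.) The only point requiring care --- there is no genuine obstacle here, the statement being a routine invariance lemma --- is in the $\to$ and $\tto$ steps: one must invoke the back-and-forth condition in the direction (zag, not zig) that produces a node $k'$ \emph{above} $k$, where the universally quantified semantic clause for $k$ is usable, and one must check that the complexity really drops by one across an implication or an arrow, so that the induction hypothesis is available at the lower index $n'$.
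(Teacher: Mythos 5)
Your proof is correct and follows essentially the same route as the paper: part (b) by induction on $\phi$, with the zag property producing the node $k'$ above $k$ at the decremented index in the $\to$ and $\tto$ cases, the other direction being symmetric. Your explicit derivation of (a) from (b) via downward closure (or the direct induction at index $\infty$) correctly fills in the part the paper leaves to the reader.
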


 \begin{proof}
 We just prove (b). 
 We proceed by induction on $\phi$. The cases of atoms, conjunction and disjunction are trivial.
 The cases of $\to$ and $\tto$ are analogous. We treat the case of $\tto$.
 Suppose $\phi$ is $\psi\tto \chi$. Since, $\phi\not\in\cocl{0}(\pvsa\,)$, we only need to consider the case
 where the complexity is a successor.
 Suppose $\bba k{n+1}m$ and  $\phi\in\cocl{n+1}(\pvsa)$. In this case, $\psi$ and
 $\chi$ will be in $\cocl{n}(\pvsa\,)$.
 
 Suppose $k\Vdash \psi\tto \chi$ and $m\sqsubset  m' \Vdash \psi$. Then, for some $k'\sqsupset k$, 
 we have $\bba {k'}n{m'}$. It follows, by the induction hypothesis,
  that $k'\Vdash \psi$ and, hence,
 $k'\Vdash \chi$. Since  $\bba {k'}n{m'}$, again by the induction hypothesis, we may
  conclude that $m'\Vdash \chi$.  Hence,  $m\Vdash \psi\tto \chi$.
 The other direction is similar.
 \end{proof}
   
   \begin{theorem}
   Consider two models $\mathcal K$ and $\mathcal M$ and consider
   a node $k$ of $\mathcal K$ and a node $m$ of $\mathcal M$.
   Suppose, for all $\phi\in \cocl{n}(\pvsa\,)$, we have
   $k\Vdash \phi$ iff $m\Vdash \phi$. Then $k\simeq_{n,\pvsa} m$.
   \end{theorem}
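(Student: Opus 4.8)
The plan is to produce explicitly a bounded bisimulation $\mathcal Z$ for $\pvsa$ between $\mathcal K$ and $\mathcal M$ with $\bba knm$, and then to invoke the definition of $\simeq_{n,\pvsa}$. Since all our frames are finite, for each $j\leq n$ the set $\cocl{j}(\pvsa\,)$ is finite modulo provable equivalence; I would fix once and for all a finite set of representatives for each such $\cocl{j}(\pvsa\,)$, and for a node $x$ of either model let $t_j(x)$ be the set of those representatives forced at $x$. Then I would define $\bba xjy$ to hold exactly when $j\leq n$ and $t_j(x)=t_j(y)$, i.e.\ when $x$ and $y$ force the same formulas of $\cocl{j}(\pvsa\,)$. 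The atomic clause is immediate, because every $p\in\pvsa$ has $\comp{p}=0\leq j$, so $p\in\cocl{j}(\pvsa\,)$; and $\bba knm$ holds by the hypothesis of the theorem. So everything comes down to checking the zig and zag conditions, and by the evident symmetry it suffices to treat zig.

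For zig, suppose $\bba x{j+1}y$ (so $j+1\leq n$) and $x\kle x'$ with $\kle\in\verz{\preceq,\sqsubset}$; I must find $y'\gg y$ with $\bba{x'}{j}{y'}$. I would let $\theta^{+}$ be the conjunction of those representatives of $\cocl{j}(\pvsa\,)$ forced at $x'$ and $\theta^{-}$ the disjunction of those not forced at $x'$ (with the usual conventions that an empty conjunction is $\top$ and an empty disjunction is $\bot$). Then $\comp{\theta^{+}},\comp{\theta^{-}}\leq j$, so, writing $\theta^{+}\odot\theta^{-}$ for $\theta^{+}\to\theta^{-}$ when $\kle$ is $\preceq$ and for $\theta^{+}\tto\theta^{-}$ when $\kle$ is $\sqsubset$, we get $\theta^{+}\odot\theta^{-}\in\cocl{j+1}(\pvsa\,)$. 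The key point is that $x\nVdash\theta^{+}\odot\theta^{-}$: indeed $x'$ itself witnesses the failure, since $x'\Vdash\theta^{+}$ while $x'\nVdash\theta^{-}$ --- in the $\sqsubset$-case directly because $x\sqsubset x'$, so $x'$ is an arrow-successor of $x$ at which the implicative clause fails; in the $\preceq$-case one first observes $x'\nVdash\theta^{+}\to\theta^{-}$ and then transfers this downwards along $x\preceq x'$ by the contrapositive of persistence. Now $\bba x{j+1}y$ and $\theta^{+}\odot\theta^{-}\in\cocl{j+1}(\pvsa\,)$ give $y\nVdash\theta^{+}\odot\theta^{-}$, and unwinding the forcing clause for $\odot$ yields a node $y'$ with $y\kle y'$ (that is, $y'\gg y$), $y'\Vdash\theta^{+}$, and $y'\nVdash\theta^{-}$. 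From $y'\Vdash\theta^{+}$, the node $y'$ forces every $\cocl{j}(\pvsa\,)$-representative forced at $x'$; from $y'\nVdash\theta^{-}$ (a disjunction, so no disjunct is forced), it forces none of the others; hence $t_j(y')=t_j(x')$, i.e.\ $\bba{x'}{j}{y'}$, as required. Zag would be proved symmetrically, building $\theta^{\pm}$ from $y'$ rather than from $x'$. Together with $\bba knm$ this gives $k\simeq_{n,\pvsa}m$.

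The one genuinely delicate point --- the hard part --- is the choice of witnessing formula, and it is where the intuitionistic setting bites. The ``obvious'' characteristic formula $\bigwedge\verz{\phi \mid x'\Vdash\phi}\wedge\bigwedge\verz{\neg\phi \mid x'\nVdash\phi}$ is useless here, because intuitionistically $x'\nVdash\phi$ does \emph{not} entail $x'\Vdash\neg\phi$; that formula need not be forced at $x'$ at all, so it cannot be transported from $x$ to $y$. Working instead with the single implication $\theta^{+}\odot\theta^{-}$, which is \emph{refuted} at $x$ (hence at $y$) and whose refutation produces a successor with precisely the right $\cocl{j}(\pvsa\,)$-type, is what circumvents the difficulty. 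Everything else --- finiteness of $\cocl{j}(\pvsa\,)$ modulo provable equivalence, the atomic clause, and the bookkeeping that keeps $\to$ paired with $\preceq$ and $\tto$ with $\sqsubset$ --- is routine, and the previous theorem already supplies the converse implication.
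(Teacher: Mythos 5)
Your proposal is correct and takes essentially the same route as the paper: the same relation (agreement on $\cocl{j}(\pvsa\,)$), the same witnessing formulas $\theta^{+}\to\theta^{-}$, resp.\ $\theta^{+}\tto\theta^{-}$, refuted at $x$ and hence at $y$, and the same extraction of the matching successor. One minor quibble: the finiteness of $\cocl{j}(\pvsa\,)$ modulo provable equivalence is a purely syntactic fact (finitely many variables, bounded nesting of implications and Lewis implications), not a consequence of the finiteness of the frames, but since the paper asserts exactly this fact beforehand, nothing in your argument is affected.
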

   
   \begin{proof}
   We define the relation $\bba k\alpha m$ iff  $\alpha\in\omega$ and, for all $\phi\in \cocl{\alpha}(\pvsa\,)$, we have
   $k\Vdash \phi$ iff $m\Vdash \phi$. It is sufficient to check that $\mathcal Z$ is a bounded bisimulation.
   
   The atomic case is easy. We prove the zig-property for $\mathcal Z_{n+1}$.
   
   \medent
   Suppose $\bba k{n+1}m$ and $k\preceq k'$. We define:
   \begin{itemize}
   \item
   $\theta^+ := \bigwedge \verz{\phi\in \cocl{n}(\pvsa\,)\mid k'\Vdash \phi}$;
   \item
     $\theta^- := \bigvee \verz{\psi\in \cocl{n}(\pvsa\,)\mid k'\nVdash \psi}$.
     \end{itemize}
     Then, $k \nVdash \theta^+\to\theta^-$. Since $(\theta^+\to\theta^-)\in\cocl{n+1}(\pvsa)$,
     we find $m \nVdash \theta^+\to\theta^-$. It follows that there is an
     $m'\succeq m$ with $m'\Vdash \theta^+$ and $m'\nVdash \theta^-$.
     This implies that $\bba{k'}n{m'}$.
     
     Suppose $\bba k{n+1}m$ and $k\sqsubset k'$. We define $\theta^+$ and $\theta^-$ as before.
       It follows that $k \nVdash \theta^+\tto\theta^-$. Since $(\theta^+\tto\theta^-)\in\cocl{n+1}(\pvsa\,)$,
     we find $m \nVdash \theta^+\tto\theta^-$. Thus, there is an
     $m'\sqsupset m$ with $m'\Vdash \theta^+$ and $m'\nVdash \theta^-$.
     This implies that $\bba{k'}n{m'}$.
   \end{proof}

We will need the notion of bisimulation $\pvsb$-extension.
Let $\pvsa$ and $\pvsb$ be disjoint sets of variables.
We consider  a $\pvsa$-model $\mathcal K$  with root $k_0$ and a $\pvsa,\pvsb$-model $\mathcal M$  with root $m_0$.
We say that $\mathcal M$ is a \emph{bisimulation $\pvsb$-extension} of
$\mathcal K$ if $k_0$ is $\pvsa$-bisimilar with $m_0$.


\section{Uniform Interpolation} \label{unifint}

In this section, we prove uniform interpolation for our central systems. We begin with formulating the main result, with the remainder of the section devoted to its proof. 

Let $\logvar$ be one of \sysunus\ and \sysduo. We write $|X|$ for the cardinality of $X$. We have:

\begin{theorem}[Uniform Interpolation] \label{unii}
Consider any formula $\phi$ and any finite set of variables $\pvsb$. Let 
\[\nu:= |\averz{\psi}{{\sf Sub}(\phi) }
{\psi \text{ is a variable, an implication or a  Lewis implication}}|  \]
We have the following.
\begin{enumerate}[1.]
\item
There is a formula $\exists \pvsb\; \phi$, the \emph{post-interpolant} of $\phi$, such that:
     \begin{enumerate}[a.]
     \item
     ${\sf PV}(\exists\pvsb \; \phi) \subseteq  {\sf PV}(\phi)\setminus \pvsb$
    \item
     For all $\psi$ with ${\sf PV}(\psi)\cap  \pvsb=\emptyset$, we have:  
    \[\logvar\vdash  \phi \rightarrow  \psi 
    \;\; \Leftrightarrow \;\;  \logvar\vdash  \exists  \pvsb\;\phi  \rightarrow  \psi.\]
    \item
     $\comp{\exists\pvsb \; \phi} \leq 2 \nu +2$.
    \end{enumerate}
\item 
 There is a formula $\forall \pvsb\; \phi$, the \emph{pre-interpolant} of $\phi$, such that:
         \begin{enumerate}[a.]
         \item
         ${\sf PV}(\forall \pvsb \; \phi) \subseteq  {\sf PV}(\phi)\setminus \pvsb$
         \item
        For all $\psi$ with ${\sf PV}(\psi)\cap  \pvsb=\emptyset$, we have:
        \[ \logvar\vdash  \psi \rightarrow  \phi 
         \;\;\Leftrightarrow\;\;  \logvar\vdash \psi \rightarrow  \forall \pvsb \; \phi.\]
         \item
          $\comp{\forall \pvsb \; \phi} \leq 2\nu  +1$.
        \end{enumerate}
\end{enumerate}
\end{theorem}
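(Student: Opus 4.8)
\noindent
The plan is to give a model-theoretic proof, uniformly for $\logvar\in\{\sysunus,\sysduo\}$, that realises $\exists\pvsb\,\phi$ and $\forall\pvsb\,\phi$ as \emph{bisimulation quantifiers}, extending the semantic proof of uniform interpolation for \ipc. Write $\pvsa := {\sf PV}(\phi)\setminus\pvsb$. The guiding idea is that a formula $\psi$ with ${\sf PV}(\psi)\cap\pvsb=\emptyset$ is invariant under $\pvsa$-reducts and under \emph{full} bisimulations over $\pvsa$ \emph{regardless of its complexity}; so the quantified formulas only have to track $\phi$ up to $\pvsa$-bisimilarity, and the complexity bounds $2\nu+2$ and $2\nu+1$ will be extracted entirely from a finitary analysis of $\phi$. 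The case of \systres\ (and, if one wishes, a second route to the bare existence of interpolants for \sysduo) is then obtained in Section~\ref{retrosmurf} from the \sysunus\ case via the retraction of Theorem~\ref{ochtendsmurf}, since uniform interpolation transfers along retractions.

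Concretely, fix $n := 2\nu+2$ for the post-interpolant and $n := 2\nu+1$ for the pre-interpolant. By Section~\ref{bobbysmurf}, $\cocl{n}(\pvsa)$ is finite modulo provable equivalence, so for a node $k$ of any model whose variables include $\pvsa$ we may speak of its \emph{$(\pvsa,n)$-type}: the conjunction of a fixed set of representatives of the formulas in $\cocl{n}(\pvsa)$ that $k$ forces. By the characterisation of bounded bisimilarity in Section~\ref{bobbysmurf}, two nodes have the same $(\pvsa,n)$-type iff they are $\simeq_{n,\pvsa}$-related, and $k$ forces a given $(\pvsa,n)$-type iff $k$ is $\simeq_{n,\pvsa}$-related to a node realising it. Take $X$ to be a finite adequate${}^+$ set containing $\phi$ and representing $\cocl{n}(\pvsa)$ modulo provable equivalence, form the pre-Henkin model of Section~\ref{ks} — $\mathbb H^{\circ}_X$ for \sysunus, $\mathbb H^{\circ +}_X$ for \sysduo\ — and put
\[ \exists\pvsb\,\phi := \bigvee \{\, t : t \text{ is an } (\pvsa,n)\text{-type realised by some node } \varDelta \text{ with } \phi\in\varDelta \,\}, \]
\[ \forall\pvsb\,\phi := \bigvee \{\, t : t \text{ is an } (\pvsa,n)\text{-type with } \logvar\vdash t\to\phi \,\}. \]
Items (1a), (1c), (2a), (2c) are then immediate: each disjunct lies in $\cocl{n}(\pvsa)$ and disjunction does not raise complexity, so these are $\lang(\pvsa)$-formulas of complexity at most $n$.

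For the universal property one argues through completeness (Section~\ref{fhc}). First, $\logvar\vdash\phi\to\exists\pvsb\,\phi$: any node forcing $\phi$ in any model corresponds, by completeness, to an $X$-prime set $\varDelta$ with $\phi\in\varDelta$ having the same $(\pvsa,n)$-type, which it therefore forces; hence $\logvar\vdash\phi\to\psi$ implies $\logvar\vdash\exists\pvsb\,\phi\to\psi$, giving the nontrivial implication of (1b) in one direction and all of it once combined with soundness. Conversely, assume $\logvar\vdash\phi\to\psi$ with ${\sf PV}(\psi)\cap\pvsb=\emptyset$, and let $k$ force a disjunct $t$ of $\exists\pvsb\,\phi$, witnessed by $\varDelta$ with $\phi\in\varDelta$ and $k\simeq_{n,\pvsa}\varDelta$. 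The heart of the proof is to \emph{upgrade} this bounded bisimilarity: weave the model of $k$ together with the pre-Henkin model near $\varDelta$ along the $n$-bisimulation, truncating at depth $n$ using the depth function ${\sf d}_X$ of Section~\ref{badsmurf}, so as to obtain a genuine $\logvar$-model $(\mathcal M,m)$ over ${\sf PV}(\phi)$ with $m\Vdash\phi$ (its ${\sf PV}(\phi)$-structure down to depth $n\geq\comp\phi$ agrees with that near $\varDelta$) and $k\simeq_\pvsa m$ (its $\pvsa$-reduct is fully bisimilar to that of $k$). Then $m\Vdash\psi$ by soundness, and $k\Vdash\psi$ by $\pvsa$-bisimulation invariance of $\psi$ — a step that uses no bound on $\comp\psi$. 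The pre-interpolant is treated dually: for (2b), given $\logvar\vdash\psi\to\phi$, if the $(\pvsa,n)$-type $t$ of a node $k\Vdash\psi$ were not among the disjuncts of $\forall\pvsb\,\phi$ then $\logvar\nvdash t\to\phi$, and the same weaving produces $(\mathcal M,m)$ with $m\nVdash\phi$ and $k\simeq_\pvsa m$, whence $m\Vdash\psi$ and so $m\Vdash\phi$, a contradiction.

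The main obstacle is exactly this weaving/truncation construction, together with the verification that the chosen $n$ is large enough. One must show that the depth of the Henkin model of $\phi$ alone (for the adequate${}^+$ closure of $\phi$) is controlled by $\nu=|\{\,\psi\in{\sf Sub}(\phi) : \psi \text{ is a variable, an implication, or a Lewis implication}\,\}|$ — with the interleaving of the intuitionistic $\preceq$-layers and the L\"obian $\sqsubset$-layers accounting for the factor $2$ and the small additive constant — so that $n$ levels of the bisimulation game suffice to turn $\simeq_{n,\pvsa}$ into full $\pvsa$-bisimilarity after the weave; and one must re-check that the woven, depth-truncated structure is still a legitimate model of the system, i.e.\ that it satisfies \textbf{strong}, ${\tto}\hyph\mathbf p$, persistence and, in the \sysduo\ case, \textbf{brilliancy}. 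The remaining points — the routine handling of variables of $\psi$ lying outside ${\sf PV}(\phi)$, and the parallel run of the whole argument for \sysduo\ with $\mathbb H^{\circ +}_X$ and the sub${}^+$formula machinery in place of $\mathbb H^{\circ}_X$ — are straightforward.
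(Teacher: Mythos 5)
Your candidate formulas are in fact fine (once the theorem is available, one checks easily that your disjunctions of types are provably equivalent to the paper's interpolants), but your argument for the universal properties (1b) and (2b) has a genuine gap at its pivot: the claim that ``$k$ forces a given $(\pvsa,n)$-type iff $k$ is $\simeq_{n,\pvsa}$-related to a node realising it''. An $(\pvsa,n)$-type, as you define it, is the conjunction of the formulas of $\cocl{n}(\pvsa)$ forced at a node; it carries only \emph{positive} information. Intuitionistically a node can force this conjunction while forcing strictly more of $\cocl{n}(\pvsa)$: take any node $\varDelta$ that does not decide $p\in\pvsa$ and let $k$ be a $\preceq$-successor of $\varDelta$ at which $p$ becomes true; by persistence $k$ forces the entire type of $\varDelta$, yet $k$ and $\varDelta$ disagree on the atom $p$, so they are not even $\simeq_{0,\pvsa}$-related, let alone $\simeq_{n,\pvsa}$-related. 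Consequently the step ``let $k$ force a disjunct $t$ of $\exists\pvsb\,\phi$, witnessed by $\varDelta$ with $\phi\in\varDelta$ and $k\simeq_{n,\pvsa}\varDelta$'' is unjustified and in general false; and without that bounded bisimilarity your weaving/truncation construction (which is the analogue of Lemma~\ref{unijoin}, and which you in any case only sketch) has no input to work with, since Lemma~\ref{unijoin} explicitly requires the two-sided hypothesis $k_0\simeq_{2\nico{X}+1,\pvsa}m_0$. The same defect undermines your treatment of the pre-interpolant.

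This is precisely the point where the paper's choice of definitions does real work. There $\exists\pvsb\,\phi$ is the \emph{conjunction} of all $\cocl{2\nu+2}(\pvsa)$-consequences of $\phi$; at a node $m$ refuting $\exists\pvsb\,\phi\to\psi^\ast$ one forms both $\theta^{+}$ (the conjunction of what $m$ forces in $\cocl{2\nu+1}(\pvsa)$) and $\theta^{-}$ (the disjunction of what it does not force), notes that $\theta^{+}\to\theta^{-}$ itself lies in $\cocl{2\nu+2}(\pvsa)$ --- this is where the ``$+2$'' comes from --- and concludes $\phi,\theta^{+}\nvdash_{\sysunus}\theta^{-}$; a root $k$ forcing $\phi,\theta^{+}$ and omitting $\theta^{-}$ then agrees with $m$ on \emph{all} of $\cocl{2\nu+1}(\pvsa)$, both positively and negatively, and only this two-sided agreement yields the bounded bisimilarity fed into Lemma~\ref{unijoin}. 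For the pre-interpolant the corresponding consistency claim $\theta^{+}\nvdash\theta^{-}\vee\phi$ additionally uses the disjunction property of $X$-prime theories (Theorem~\ref{priyes}). To repair your proof you would need a comparable mechanism for recovering the negative information about the node forcing your disjunct (your disjuncts record none), in addition to actually proving the amalgamation lemma you defer; as written, the bisimilarity you rely on is simply not available.
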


\noindent
We note that uniform interpolation strengthens interpolation, since the uniform interpolants just depend on one of the
two formulas involved in interpolation. In classical logic the post- and the pre-interpolant are interdefinable.
In constructive logic, the post-interpolant is definable from the pre-interpolant by taking 
 \[
 \exists \pvsb \; \phi :\iff \forall  r\, (\forall \pvsb \,( \phi \to r) \to r),
 \]
where $r$ is a fresh variable. However, we cannot, \emph{prima facie},  generally define a pre-interpolant from a post-interpolant. 

\begin{example} \label{martasmurf}
The following counter-example was suggested to us by Marta Bilkova. The positive fragment of \ipc\ has post-interpolation,
but not pre-interpolation. The fact that the positive fragment has post-interpolation was shown by Dick de Jongh and Zhiguang Zhao in their article
\cite{dejo:posi15}. The fragment cannot have post-interpolation, since, for example, $\forall p \; p$ defines $\bot$.
\end{example}

\begin{remark}[Historical Note]
The idea of uniform interpolation for \ipc\ was anticipated by Wim Ruitenburg \cite[Example 2.6]{ruit:peri84} and proved by Andrew Pitts \cite{pitt:inte92} 
 using
proof systems allowing efficient cut-elimination  developed
independently by  J\"org Hudelmaier \cite{hude:boun89} and Roy Dyckhoff \cite{dyck:cont92}.
Silvio Ghilardi and Marek Zawadowski \cite{ghil:shea95}, and, independently but somewhat later,
 \visser\  \cite{viss:laye96}, found a model-theoretic proof for Pitts' result using bounded bisimulations.
Both Ghilardi {\&}\ Zawadowski and  \visser\  establish the connection between uniform interpolation and bisimulation quantifiers.

The corresponding result for {\gl} was proved  by Volodya Shavrukov \cite{shav:suba93} using the method of characters as
developed by Zachary Gleit and Warren Goldfarb, who employed them to provide alternative proofs of the Fixed Point Theorem 
 and the ordinary Interpolation Theorem \cite{glei:char90}. The methods of Gleit \& Goldfarb and later of Shavrukov can
be viewed as model-theoretic. The papers  \cite{ghil:shea95} and  \cite{viss:laye96} also provide a new proof for \gl.

While completing the final draft,  F\'{e}r\'{e}e et al. \cite{HFetal24} have announced a Pitts-style syntactic proof of uniform interpolation for \systres. 
Furthermore, their development is supported by a formalization in the Coq proof assistant, which furthermore allows extraction of  verified code computing interpolants. 
\end{remark}

\noindent
We first prove our result for \sysunus. In the Subsection~\ref{microsmurf}, we sketch how to adapt the proof to
the case of \sysduo. In Subsection~\ref{retrosmurf}, we show how do derive the result for \sysduo\ and \systres\
from the result for \sysunus.

The proof of uniform interpolation in Subsection~\ref{uniunus} is a direct adaptation of the proof in \cite{viss:laye96} for the case of
{\sf IPC}.

\subsection{Uniform Interpolation for \sysunus}\label{uniunus}
We prove a central lemma. We define, for a given Kripke model $\mathcal K$ and a $\mathcal K$-node $k$:
 ${\sf Th}_X(k) := \verz{\phi\in X\mid k \Vdash \phi}$.

\begin{lemma}\label{unijoin} Consider pairwise disjoint finite sets of propositional variables $\pvsb$, 
$\pvsa$ and $\pvsc$. Let $X\subseteq {\lang}(\pvsb,\pvsa\,)$
 be a finite adequate set with $\pvsa\subseteq X$. Let $\mathcal K$ be a $\pvsb,\pvsa$-model with root $k_0$, and let
$\mathcal M$ be a  $\pvsa, \pvsc$-model with root $m_0$. we define:{\small
\[ \nico X  := |\averz{\gamma}{X}{\text{$\gamma$ is a propositional variable, an implication or a Lewis implication}}|. \]
}
Suppose that $k_0 \simeq _{2\nico X  +1,\pvsa}m_0$. Then 
there is a bisimulation $\pvsb$-extension $\mathcal N$  of $\mathcal M$ with root $n_0$ such that 
${\sf Th}_X(n_0) = {\sf Th}_X(k_0)$.      
\end{lemma}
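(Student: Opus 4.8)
The plan is to build the desired model $\mathcal N$ by a Henkin-style saturation over $\mathcal M$, using the bounded bisimulation $\mathcal Z$ witnessing $k_0 \simeq_{2\nico X + 1,\pvsa} m_0$ as a ``budget'' that tells us, at each node, how many layers of $X$-theory we still need to mimic. Concretely, I would first fix a downward-closed bounded bisimulation $\mathcal Z$ for $\pvsa$ between $\mathcal K$ and $\mathcal M$ with $\bba{k_0}{2\nico X + 1}{m_0}$. The nodes of $\mathcal N$ will be pairs (or finite sequences encoding the construction history) of the form $\tupel{\varDelta, m}$ where $m$ is a node of $\mathcal M$, $\varDelta$ is an $X$-prime set, and some bisimulation index $\alpha$ is attached so that $\bba{k}{\alpha}{m}$ for a suitable $\mathcal K$-node $k$ with ${\sf Th}_X(k) = \varDelta$. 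The root is $\tupel{{\sf Th}_X(k_0), m_0}$, which makes sense precisely because $k_0 \simeq_{\ge 1,\pvsa} m_0$ forces $k_0$ and $m_0$ to agree on all of $\pvsa$, and the $\pvsb$-part of the theory is read off $k_0$. The orderings $\preceq$ and $\sqsubset$ on $\mathcal N$ are inherited from the $\mathcal M$-component together with the subset ordering on the $\varDelta$-components, and we check \textbf{strong} and ${\tto}\hyph\mathbf p$ (and, in the \sysduo\ case, \textbf{brilliancy}) are preserved.

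\emph{The saturation step.} Given a node $\tupel{\varDelta, m}$ of $\mathcal N$ carrying index $\alpha$ and a witnessing $\mathcal K$-node $k$, and given an ``obligation'' — either a $\preceq$-successor or a $\sqsubset$-successor demand coming from the forcing clauses for $\to$ or $\tto$ applied to formulas of $X$ — I would use the zig/zag properties of $\mathcal Z$ to move $k$ to an appropriate $k'$ with $\bba{k'}{\alpha-1}{m'}$ for a corresponding $m'$ above $m$, and then extend ${\sf Th}_X(k')$ to an $X$-prime set $\varDelta'$ compatible with $\sqsubset$ or $\preceq$ using the standard Lindenbaum construction already employed in the Henkin proofs of Section \ref{fhc}. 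The key arithmetic bookkeeping: each formula of $X$ of the relevant shape costs one unit of bisimulation budget when we ``fire'' its forcing clause, and the total number of such formulas along any branch is bounded by $\nico X$; since each modal/implicational step of the construction decrements $\alpha$ by (at most) one, and the truth lemma for formulas of complexity $\le n$ only requires budget $n$, the starting budget $2\nico X + 1$ is comfortably enough to keep the bisimulation alive down to the leaves. (The factor $2$ absorbs the fact that $\preceq$-steps and $\sqsubset$-steps both consume budget and can alternate.)

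\emph{The truth lemma.} Once $\mathcal N$ is built, I would prove by induction on $\phi \in X$ that for every node $\tupel{\varDelta,m}$ of $\mathcal N$ we have $\tupel{\varDelta,m}\Vdash \phi$ iff $\phi\in\varDelta$, exactly as in the Henkin lemma for \sysunus: atoms and the non-modal connectives are immediate from $X$-primeness, and the $\to$ and $\tto$ cases use, respectively, the $\preceq$-saturation and $\sqsubset$-saturation built into the construction. Applying this at the root gives ${\sf Th}_X(n_0) = \varDelta_0 = {\sf Th}_X(k_0)$. Finally, to see that $\mathcal N$ is a bisimulation $\pvsb$-extension of $\mathcal M$, note that the projection $\tupel{\varDelta,m}\mapsto m$ is, by construction, a $\pvsa$-bisimulation (it respects atoms in $\pvsa$ because the $\varDelta$-components were always chosen to extend ${\sf Th}_X$ of a $\mathcal K$-node $\mathcal Z$-related to the $\mathcal M$-node, and zig/zag for this projection is exactly what the saturation guarantees), and it sends $n_0$ to $m_0$.

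\textbf{Main obstacle.} The delicate point is the budget accounting in the saturation: one must set up the recursion so that \emph{every} obligation that could ever be generated at a node — including those that only become visible after several further extension steps — is discharged while the attached bisimulation index is still large enough for the truth lemma to go through at the complexity of the formula triggering it. Getting the constant right (why $2\nico X + 1$ and not something larger) requires carefully distinguishing the roles of $\preceq$-steps, which do not increase $\sqsubset$-depth, from $\sqsubset$-steps, and observing that the relevant quantity to control is not the raw path length but the number of distinct $X$-formulas of implication/arrow type whose clauses still need witnessing, which is monotonically consumed. This is the same phenomenon as in \cite{viss:laye96} for {\sf IPC}, adapted to the two-relation setting; the extra care here is that a single node must simultaneously saturate for $\preceq$ (as in {\sf IPC}) and for $\sqsubset$.
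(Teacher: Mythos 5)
There is a genuine gap, and it sits exactly where you flag your ``main obstacle'': the budget accounting is not actually supplied, and the version you state would fail. Your accounting (``each modal/implicational step decrements $\alpha$ by at most one, and the number of such formulas along a branch is bounded by $\nico X$'') ties budget consumption to construction steps, but the steps are not in bijection with firings of $X$-formulas. In particular, to get that the projection $\tupel{\varDelta,m}\mapsto m$ is a $\pvsa$-bisimulation you need the \emph{zag} property with respect to \emph{every} move $m\kle s$ in $\mathcal M$, not only the obligations generated by refuted implications or Lewis implications in $X$; an obligation-driven saturation does not create nodes above arbitrary $s$, and if you instead respond to all $\mathcal M$-moves while decrementing $\alpha$ each time, chains in $\mathcal M$ of length greater than $2\nico X+1$ exhaust any finite budget. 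The missing idea (which is the heart of the paper's proof) is an invariant that lets you decrement \emph{only when the $X$-theory strictly grows}: the paper attaches to each node $\tupel{\varDelta,m}$ a ``witnessing triple'' $k'\preceq k$, $m'\preceq m$ with $\varDelta={\sf Th}_X(k)={\sf Th}_X(k')$, $k'\,\mathcal Z_{2{\sf d}_X(k')+1}\,m'$ and $k\,\mathcal Z_{2{\sf d}_X(k')}\,m$, where ${\sf d}_X$ is depth in the pre-Henkin model $\mathbb H^\circ_X$ (so ${\sf d}_X\leq\nico X$). When a move does not change the theory, the old base $(k',m')$ is re-used and no budget is spent; when the theory strictly grows, ${\sf d}_X$ strictly drops, so $2{\sf d}_X(\ell)+1\leq 2{\sf d}_X(k')$ restores the invariant. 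Your closing remark correctly identifies ``the number of distinct $X$-formulas still to be witnessed'' as the right quantity, but without a concrete device of this kind the construction and both claims (the truth lemma and the bisimulation-extension claim) do not go through, and the factor $2$ and the $+1$ cannot be justified.

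Two further points. First, ${\sf Th}_X(k')$ is automatically $X$-prime (by soundness), so no Lindenbaum extension is needed there; more importantly, defining $\sqsubset_{\mathcal N}$ using mere subset ordering on the $\varDelta$-components is not enough: in the left-to-right direction of the truth lemma for $\tto$, from $(\psi\tto\chi)\in\varDelta$, $\psi\in\varGamma$ and $\varDelta\subseteq\varGamma$ you cannot conclude $\chi\in\varGamma$. You need the first components to be related by the arrow relation of the pre-Henkin model $\mathbb H^\circ_X$ (which transfers the commitment $\bigwedge\varGamma\tto\chi$ into membership of $\chi$), as in the paper's componentwise definition $\varDelta\sqsubset_{\mathbb H^\circ_X}\varGamma$ and $m\sqsubset_{\mathcal M}s$. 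Second, the paper defines $\mathcal N$ statically as the set of all pairs admitting a witnessing triple rather than by recursive saturation; this is what makes the zag argument for arbitrary $\mathcal M$-successors possible at all.
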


\begin{proof}
Let $K$ be the set of nodes of $\mathcal K$ and let  $M$ be the set of nodes of $\mathcal M$.
 Let ${\mathcal Z}$ be a downwards closed witness of 
$k_0 \simeq_{2\nico X  +1,\pvsa} m_0$.
We define, for $k$ in $\mathcal K$: 
${\sf d}_X(k) := {\sf d}_{\mathbb H^\circ_X}({\sf Th}_X(k))$. Note that ${\sf d}_X(k)\leq \nico X $, since
every time we go strictly up in $\mathbb H^\circ_X$, the higher node must contain an extra propositional variable or an
extra implication or an extra Lewis implication.

Consider a pair $\tupel{ \varDelta ,m}$, where $\varDelta$ is in $\mathbb H^\circ_X$ and $m$ is in $\mathcal M$. 
We say that  $k',k,m'$ is a {\em witnessing triple} for 
$\tupel{ \varDelta ,m}$ if: 
\[\varDelta ={\sf Th}_X (k)={\sf Th}_X (k'),\; k'\preceq k,\; m'\preceq m,\; 
k'{\mathcal Z}_{2  {\sf d}_X(k'){+}1}m',\;k{\mathcal Z}_{2 {\sf d}_X(k')}m.\]

\medskip
\[
\begin{tikzcd}[column sep = large]
  \varDelta \arrow[mapsfrom]{r}{{\sf Th}_X}  & 
 k  \arrow[-]{r}{{\mathcal Z}_{2{\sf d}_X(k')}} & 
 m \\
 \varDelta \arrow[mapsfrom]{r}[swap]{{\sf Th}_X} \arrow{u}{=}  & k'  \arrow[-]{r}[swap]{{\mathcal Z}_{2{\sf d}_X(k')+1}}\arrow{u}{\preceq}
  &   m' \arrow{u}[swap]{\preceq}
\end{tikzcd} 
\]

%

%

\medskip
We define the $\pvsb,\pvsa,\pvsc$-model $\mathcal N$
\begin{itemize}
\item 
$N := \verz{\tupel{ \varDelta ,m} \;|\; 
\mbox{there is a witnessing triple for }\tupel{ \varDelta ,m}}$,
\item 	
$n_0 := \tupel{ {\sf Th}_X(k_0),m_0}$,
\item 
$\tupel{ \varDelta ,m} \preceq_{\mathcal N} \tupel{ \varDelta' ,m'} 
\; :\Leftrightarrow \;  \varDelta \preceq_{\mathbb H^\circ_X}\varDelta'
\text{ and } m\preceq _{\mathcal M}m'$,
\item 
$\tupel{ \varDelta ,m} \sqsubset_{\mathcal N} \tupel{ \varDelta' ,m'} 
\; :\Leftrightarrow \;  \varDelta \sqsubset_{\mathbb H^\circ_X}\varDelta'
\text{ and } m\sqsubset _{\mathcal M}m'$,
\item 
$\tupel{ \varDelta ,m} \Vdash _{\mathcal N}a \; :\Leftrightarrow \;  
\varDelta \Vdash_{\mathbb H^\circ_X}a
\mbox{ or }m\Vdash_ {\mathcal M}a$, where $a\in \pvsa,\pvsb,\pvsc$.
\end{itemize}
We easily check that the new accessibility relations have the desired properties for an \sysunus-model.
We note that the irreflexivity
of $\sqsubset_{\mathcal M}$ is sufficient to yield the irreflexivity of $\sqsubset_{\mathcal N}$.
By assumption $k_0{\mathcal Z}_{2\nico X{+}1}m_0$. Moreover, we have
$2  {\sf d}_X(k_0){+}1 \leq 2  \nico X {+}1$.
Hence: $k_0{\mathcal Z}_{2{\sf d}_X(k_0){+}1}m_0$. So, we can take 
$k_0,k_0,m_0$ as witnessing triple  for $n_0$. Thus, indeed, $n_0\in N$.

Let $k',k,m'$ be a witnessing triple  for $\tupel{ \varDelta ,m}$. We note that,
for $p\in \pvsa$,
\[\varDelta \Vdash p \;\;\Leftrightarrow \;\;  k\Vdash p \;\; \Leftrightarrow \;\;  m\Vdash p,\]
and hence: \[\tupel{ \varDelta ,m} \Vdash p  \;\;\Leftrightarrow  \;\; \varDelta \Vdash p  \;\;\Leftrightarrow \;\;  m\Vdash p.\]

\noindent
We claim:
\begin{description}
\item[Claim 1]  $n_0 \simeq_{\pvsa,\pvsc} m_0$.
\item[Claim 2]  For $\phi \in X: \tupel{ \varDelta ,m} \Vdash \phi  
\; \Leftrightarrow \;  \phi \in\varDelta$.
\end{description}
Evidently, the lemma is immediate from the claims.

\medent
{\em We prove Claim 1.} Take, as $\pvsa,\pvsc$-bisimulation, the relation $\mathcal B$ with 
$\tupel{ \varDelta ,m} \mathrel{\mathcal B} m$. 
Clearly, we have
${\sf Th}_{\pvsa,{\pvsc}}(\tupel{ \varDelta ,m}) = {\sf Th}_{\pvsa, \pvsc}(m)$.
Moreover, ${\mathcal B}$ has the zig-property.
We check that ${\mathcal B}$ has the zag-property. 

Let $\kle$ be either $\preceq$ or $\sqsubset$.
Suppose 
$\tupel{ \varDelta ,m}\mathrel{\mathcal B}m \kle s$. We are looking for a pair 
$\tupel{ \varGamma ,s}$ in $N$ such that $\varDelta \kle \varGamma$. 

Let $k',k,m'$
 be a witnessing triple  for $\tupel{ \varDelta ,m}$.  We note that also $m' \kle s$, either by the transitivity $\preceq$ or by $\tto$-p. Since, 
$k' \mathrel{{\mathcal Z}_{2{\sf d}_X(k')+1}} m' \kle s$, there is an $\ell$ such that 
$k'\kle \ell \mathrel{{\mathcal Z}_{2{\sf d}_X(k')}} s$. We take $\varGamma :={\sf Th}_X(\ell)$. 
We will produce a witnessing triple $\ell',\ell,s'$  for $\tupel{\varGamma,s}$.
We distinguish two possibilities.

First,  $\varDelta =\varGamma$. In this case we can take: $\ell' := k'$,
and  $s' :=m'$.  This yields a witnessing triple, since $\kle$ is a subrelation of $\preceq$.
We note that, in case $\kle$ is $\sqsubset$, we have $\varDelta \sqsubset\varDelta$ in $\mathbb H^\circ$, because $k' \sqsubset \ell$.

\[
\begin{tikzcd}[column sep = large]
\varGamma = \varDelta  \arrow[mapsfrom, dashed]{r}{{\sf Th}_X} & \ell  \arrow[-, dashed]{r}{{\mathcal Z}_{2{\sf d}_X(k')}}& s \\
  \varDelta \arrow[mapsfrom]{r}{{\sf Th}_X}  & 
 k  \arrow[-]{r}{{\mathcal Z}_{2{\sf d}_X(k')}} & 
 m \arrow{u}{\kle} \\
 \varDelta \arrow[mapsfrom]{r}[swap]{{\sf Th}_X} \arrow{u}[swap]{=}\arrow[dashed, bend left]{uu}{=} & 
 \ell'= k'  \arrow[-]{r}[swap]{{\mathcal Z}_{2{\sf d}_X(k')+1}}\arrow{u}[swap]{\preceq} \arrow[dashed, bend left]{uu}[near end]{\kle}
  &   s'= m' \arrow{u}{\preceq}\arrow[bend right]{uu}[swap]{\kle}
\end{tikzcd} 
\]

%


\medskip\noindent
Secondly,  $\varDelta \neq\varGamma$. In this case we can take:
 $\ell' := \ell $, $s':=s$. To see this, note that, since $k'\kle \ell $, we have: 
$\varDelta = {\sf Th}_X (k')\prec \varGamma$. 
Ergo ${\sf d}_X(\ell )< {\sf d}_X(k')$. It follows that:
$2{\sf d}_X(\ell)+1 \leq 2{\sf d}_X(k')$.
So, $\ell \mathrel{{\mathcal Z}_{2  {\sf d}_X(\ell')+1}} s$ (and, by downward closure, also $\ell \mathrel{{\mathcal Z}_{2{\sf d}_X(\ell')}} s$).
\[
\begin{tikzcd}[column sep = large]
\varGamma   \arrow[mapsfrom, dashed]{r}{{\sf Th}_X} & \ell= \ell'  \arrow[-, dashed]{r}{{\mathcal Z}_{2{\sf d}_X(\ell)+1}}& s'=s \\
  \varDelta \arrow[mapsfrom]{r}{{\sf Th}_X}  & 
 k  \arrow[-]{r}{{\mathcal Z}_{2{\sf d}_X(k')}} & 
 m \arrow{u}{\kle} \\
 \varDelta \arrow[mapsfrom]{r}[swap]{{\sf Th}_X} \arrow{u}[swap]{=}\arrow[dashed, bend left]{uu}{\kle,\neq} & 
 k'  \arrow[-]{r}[swap]{{\mathcal Z}_{2{\sf d}_X(k'){+}1}}\arrow{u}[swap]{\preceq} \arrow[dashed, bend left]{uu}[near end]{\kle}
  &   m' \arrow{u}{\preceq}\arrow[bend right]{uu}[swap]{\kle}
\end{tikzcd} 
\]


\medskip\noindent
Finally, clearly, $n_0 \mathrel{\mathcal B} m_0$.

\smadent {\em We prove Claim 2.} The proof is by induction on $X$. 
The cases of atoms, conjunction and disjunction are trivial. We treat implications and Lewis implications. 
Consider the node 
$\tupel{ \varDelta ,m}$ with witnessing triple  $k',k,m'$.

Let $\phi = (\psi \to \chi)$.
Suppose $\varDelta  \nVdash \psi\rightarrow \chi$. 

If we have $\varDelta \Vdash \psi$ and $\varDelta  \nVdash \chi$, it follows
by the induction hypothesis that $\tupel{\varDelta, m} \Vdash \psi$ and   $\tupel{\varDelta, m}  \nVdash \chi$. Hence,
$\tupel{\varDelta, m}  \nVdash \psi \to \chi$.

Suppose $\varDelta \nVdash \psi$. 
We have  $k  \nVdash \psi \to \chi $. Hence, for some $\ell$ with $k \preceq \ell$, we have $\ell \Vdash \psi$ and $\ell \nVdash \chi$.
Let $\varGamma :={\sf Th}_X (\ell)$. We find:
$\varDelta\prec\varGamma$ and, thus, ${\sf d}_X(k) >  {\sf d}_X(\ell) $.   We note that it follows that $2{\sf d}_X(k')\geq 2$. Since,
 $k \mathrel{{\mathcal Z}_{2 {\sf d}_X(k')}} m$ and $k\preceq \ell$, there is an $s\succeq m$ with
$\ell \mathrel{{\mathcal Z}_{2  {\sf d}_X(k')-1}} s$. Moreover, $2  {\sf d}_X(\ell )+1 \leq 2  {\sf d}_X(k')-1$.
Ergo, $\ell\mathrel{{\mathcal Z}_{2  {\sf d}_X(\ell)+1}}s$. So, $\ell,\ell,s$ is a witnessing triple for 
$\tupel{ \varGamma,s}$. Clearly, $\tupel{ \varDelta ,m}\preceq \tupel{ \varGamma,s}$. 
By the Induction Hypothesis: $\tupel{ \varGamma,s}\Vdash \psi$ and 
$\tupel{ \varGamma,s} \nVdash \chi$. Hence,
$ \tupel{ \varDelta ,m} \nVdash \psi \rightarrow \chi $.

\[
\begin{tikzcd}[column sep = large]
\varGamma   \arrow[mapsfrom,dashed]{r}{{\sf Th}_X} & \ell= \ell'  \arrow[-, dashed]{r}{{\mathcal Z}_{2{\sf d}_X(\ell)+1}}& s'=s \\
  \varDelta \arrow[mapsfrom]{r}{{\sf Th}_X}\arrow[dashed]{u}{\prec}  & 
 k  \arrow[-]{r}{{\mathcal Z}_{2{\sf d}_X(k')}} \arrow[dashed]{u}{\preceq}& 
 m \arrow[dashed]{u}[swap]{\preceq} \\
 \varDelta \arrow[mapsfrom]{r}[swap]{{\sf Th}_X} \arrow{u}{=}& 
 k'  \arrow[-]{r}[swap]{{\mathcal Z}_{2{\sf d}_X(k'){+}1}}\arrow{u}{\preceq}
  &   m' \arrow{u}[swap]{\preceq}
\end{tikzcd} 
\]


\medskip\noindent Conversely, suppose $\tupel{ \varDelta ,m} \nVdash \psi\rightarrow \chi$. There is a 
$\tupel{ \varGamma ,s}$ in $\mathcal N$ with 
$\tupel{ \varDelta ,m}\preceq \tupel{ \varGamma ,s}$ and 
$\tupel{ \varGamma ,s}\Vdash \psi$ and $\tupel{ \varGamma ,s} \nVdash \chi $. 
Clearly $\varDelta \preceq \varGamma$. By the Induction Hypothesis 
$\varGamma \Vdash \psi$ and $\varGamma  \nVdash \chi$. Ergo,
$\varDelta  \nVdash \psi\rightarrow \chi$. 

\medent
We turn to the case that $\phi = ( \psi \tto \chi)$. Suppose $\varDelta  \nVdash \psi \tto \chi$. It follows that $k  \nVdash
 \psi \tto \chi$. Hence, there is an $\ell \sqsupset k$ such that $\ell \Vdash \psi$ and $\ell \nVdash \chi$. We choose $\ell$ to be $\sqsubset$-maximal with
this property. So, we have $\ell \Vdash \psi \tto \chi$. Let $\varGamma := {\sf Th}_X(\ell)$. We have $\varGamma \sqsupset \varDelta$ and
$\varGamma \neq \varDelta$. So ${\sf d}_X(k) > {\sf d}_X(\ell)$. Since,
 $k \mathrel{{\mathcal Z}_{2  {\sf d}_X(k')}} m$ and $k\sqsubset \ell$, there is an $s\sqsupset m$ with
$\ell \mathrel{{\mathcal Z}_{2  {\sf d}_X(k')-1}} s$. Moreover, $2  {\sf d}_X(\ell )+1 \leq 2  {\sf d}_X(k')-1$.
Ergo: $\ell\mathrel{{\mathcal Z}_{2  {\sf d}_X(\ell)+1}}s$. So $\ell,\ell,s$ is a witnessing triple for 
$\tupel{ \varGamma,s}$. Clearly, $\tupel{ \varDelta ,m}\sqsubset \tupel{ \varGamma,s}$. 
By the Induction Hypothesis:  
$\tupel{ \varGamma,n} \Vdash \psi$ and $\tupel{ \varGamma,n} \nVdash \psi$. Hence,
$ \tupel{ \varDelta ,m} \nVdash \psi \tto \chi$.

\medskip
\[
\begin{tikzcd}[column sep = large]
\varGamma   \arrow[mapsfrom,dashed]{r}{{\sf Th}_X} & \ell= \ell'  \arrow[-, dashed]{r}{{\mathcal Z}_{2{\sf d}_X(\ell)+1}}& s'=s \\
  \varDelta \arrow[mapsfrom]{r}{{\sf Th}_X}\arrow[dashed]{u}{\sqsubset, \neq}  & 
 k  \arrow[-]{r}{{\mathcal Z}_{2{\sf d}_X(k')}} \arrow[dashed]{u}{\sqsubset}& 
 m \arrow[dashed]{u}[swap]{\sqsubset} \\
 \varDelta \arrow[mapsfrom]{r}[swap]{{\sf Th}_X} \arrow{u}{=}& 
 k'  \arrow[-]{r}[swap]{{\mathcal Z}_{2{\sf d}_X(k'){+}1}}\arrow{u}{\preceq}
  &   m' \arrow{u}[swap]{\preceq}
\end{tikzcd} 
\]


\medent Conversely, suppose $\tupel{ \varDelta ,m} \nVdash \psi \tto \chi$. There is a 
$\tupel{ \varGamma ,n}$ in $\mathcal N$ with 
$\tupel{ \varDelta ,m}\sqsubset \tupel{ \varGamma ,n}$, $\tupel{ \varGamma ,n} \Vdash \psi $ and
 $\tupel{ \varGamma ,n} \nVdash \chi $. 
Clearly $\varDelta \sqsubset \varGamma$. By the Induction Hypothesis, $\varDelta \Vdash \psi$ and
$\varGamma  \nVdash \chi$. Ergo,
$\varDelta  \nVdash \psi \tto \chi$.  
\end{proof}
 
 \begin{remark}
 The definition of witnessing triple in the proof of Lemma~\ref{unijoin} is exactly the same
 as the definition given in \cite{viss:laye96} for the case of \ipc.  Thus, modulo a few details,
 our uniform interpolation proof is as simple as the proof of uniform interpolation for \ipc. 
 
 It is abundantly clear that something
 more complicated needs to be done for other intuitionistic modal logics than the ones we
 study here.
 \end{remark}
 
 \noindent
 We are now ready to prove Theorem~\ref{unii} for \sysunus.

\begin{proof}[Proof of Theorem~\ref{unii} for \sysunus]
 (1) We prove the existence of the post-interpolant $\exists \pvsb \,\phi$.
We consider $\phi$ and $\pvsb$. Let 
$\pvsa:={\sf PV}(\phi)\setminus \pvsb$. We take:
\[\exists  \pvsb\; \phi := \bigwedge \averz{\chi}{\cocl{2\nu +2}(\pvsa\,) }
{ \sysunus \vdash \phi\rightarrow \chi}.\]
Clearly, $\exists\pvsb\, \phi$ satisfies (a) and (c) of the statement of the theorem. Moreover, 
$\sysunus \vdash \phi \rightarrow \exists  \pvsb\; \phi$. Thus, all we have to prove is 
that for all $\psi$ with ${\sf PV}(\psi)\cap  \pvsb=\emptyset$, we have:
\[\sysunus\vdash  \phi  \rightarrow   \psi \;\;  \Rightarrow \;\;  
\sysunus\vdash  \exists  \pvsb \; \phi \rightarrow  \psi.\]

\noindent
Suppose, to obtain a contradiction, that, for some $\psi^\ast$ with
${\sf PV}(\psi^\ast)\cap  \pvsb=\emptyset$, we have 
$\sysunus\vdash  \phi \rightarrow  \psi^\ast$ and 
$\sysunus \nvdash  \exists  \pvsb \; \phi \rightarrow  \psi^\ast$.

We take $\pvsc:={\sf PV}(\psi^\ast)\setminus \pvsa$. 
Note that $\pvsa,\pvsb,\pvsc$ are pairwise disjoint, 
${\sf PV}(\phi)\subseteq \pvsb\cup\pvsa$ and
${\sf PV}(\psi^\ast)\subseteq \pvsa\cup\pvsc$.

Let $m$ be the root of a $\pvsa,\pvsc$-model $\mathcal M$ with 
$m\Vdash \exists  \pvsb \; \phi$ and $m \nVdash \psi^\ast$.
We define:
\begin{itemize}
\item 
$\theta^+ := \bigwedge \verz{\rho\in \cocl{2\nu+1}(\pvsa\,) \mid m\Vdash \rho }$
\item 
$\theta^{-} := \bigvee \verz{\sigma\in \cocl{2\nu+1}(\pvsa\,) \mid m \nVdash \sigma}$
\end{itemize}
We claim that $\phi,\theta^+ \nvdash_{\sysunus} \theta^-$. Suppose
$\phi\vdash_{\sysunus}  \theta^+\rightarrow \theta^-$. Hence, by definition, 
$\exists  \pvsb\, \phi  \vdash_{\sysunus}  \theta^+ \to \theta^-$. \emph{Quod non}, since 
$m\Vdash \exists  \pvsb \, \phi$ and $m\Vdash \theta^+$ and $m \nVdash \theta^-$. 

Let $k$ be the root of  some $\pvsb,\pvsa$-model $\mathcal K$ such that:
$k\Vdash \phi,\theta^+$ and $k \nVdash \theta^-$. We find that 
$k \simeq _{2\nu +1,\pvsa}m$. We note that $\nu = \nico{{\sf Sub}(\phi)}$.
We apply 
Lemma~\ref{unijoin}
with ${\sf Sub}(\phi)$ in the role of $X$ to find a  
$\pvsb,\pvsa,\pvsc$-node $n$ with:
$m \simeq_{\pvsa,\pvsc} n$ and 
${\sf Th}_{{\sf Sub}(\phi)}(k) = {\sf Th}_{{\sf Sub}(\phi)}(n)$.
It follows that  $n \nVdash \psi^\ast$, but $n\Vdash \phi$. A contradiction.

\bigskip\noindent
(2) We prove the existence of the pre-interpolant $\forall \pvsb\, \phi$.

Consider $\phi$ and $\pvsb$. Let $\pvsa:={\sf PV}(\phi)\setminus \pvsb$. 
We take
\[\forall \pvsb \; \phi := \bigvee \averz{\chi}{\cocl{2\nu +1}(\pvsa\,) }
{  \sysunus \vdash \chi\to  \phi}.\]
Clearly, $\forall \pvsb\, \phi$ satisfies (a) and (c). Moreover, 
$\sysunus \vdash \forall \pvsb \,\phi \rightarrow \phi$. Thus, all we
 have to prove is that, for all $\psi$ with 
${\sf PV}(\psi)\cap  \pvsb=\emptyset$, we have: 
\[\sysunus\vdash  \psi \rightarrow  \phi\;\; \Rightarrow\;\;  \sysunus\vdash  \psi \rightarrow  \forall \pvsb \, \phi.\]
Suppose that, to the contrary, for some $\psi^\ast$, we have
	${\sf PV}(\psi^\ast)\cap  \pvsb=\emptyset$   and 
$\sysunus\vdash  \psi^\ast \rightarrow  \phi$ and $\sysunus \nvdash  \psi^\ast \rightarrow  \forall \pvsb\, \phi$.
Take $\pvsc:={\sf PV}(\psi^\ast)\setminus \pvsa$. We note that 
$\pvsa$, $\pvsb$, and $\pvsc$ are pairwise disjoint and that
${\sf PV}(\psi^\ast)\subseteq \pvsb,\pvsa$ and ${\sf PV}(\phi)\subseteq \pvsa,\pvsc$.

Let $m$ be the root of a  $\pvsa,\pvsc$-model $\mathcal M$ with $m\Vdash \psi^\ast$ and 
$m \nVdash \forall \pvsb\; \phi$. Let 
\begin{itemize}
\item
$\theta^+:= \bigwedge\averz{\rho}{\cocl{2\nu+1}(\pvsa\,)}{m\Vdash \rho}$,
\item
$\theta^- :=  \bigvee\averz{\sigma}{\cocl{2\nu+1}(\pvsa\,)}{m\nVdash \sigma}$. 
\end{itemize}

\noindent
We claim that
$\theta^+ \nvdash_{\sysunus} \theta^-\vee \phi$. We note that, by Theorem~\ref{priyes}, the formula $\theta^+$ 
has the disjunction property. So, if 
$\theta^+\vdash_{\sysunus} \theta^-\vee \phi$, then $\theta^+\vdash_{\sysunus}  \theta^-$ or $ \theta^+\vdash_{\sysunus} \phi$.
Since $m$ witnesses that $ \theta^+ \nvdash_{\sysunus}  \theta^-$, it follows that $ \theta^+\vdash_{\sysunus}  \phi$. But, then, by
 definition, $ \theta^+\vdash_{\sysunus}  \forall \pvsb\,\phi$. \emph{Quod non}, since 
$m\Vdash_{\sysunus}   \theta^+$ and $m \nVdash_{\sysunus}  \forall \pvsb\,\phi$. 

Let $k$ be the root of a
$\pvsb,\pvsa$-model $\mathcal K$ such that $k\Vdash  \theta^+$ and 
$k \nVdash  \theta^-\vee \psi$. We find that 
$k \simeq_{2\nu +1,\pvsa}m$. We apply 
Lemma~\ref{unijoin}, with 
${\sf Sub}(\phi)$ in the role of $X$, to find a  
$\pvsb,\pvsa,\pvsc$-node $n$ with
$n \simeq_{\pvsa,\pvsc}m$ and 
${\sf Th}_{{\sf Sub}(\phi)}(k) = {\sf Th}_{{\sf Sub}(\phi)}(n)$.
It follows that $n\Vdash \psi^\ast$, but $n \nVdash \phi$. A contradiction.      
\end{proof}

\noindent

\subsection{Uniform Interpolation for \sysduo}\label{microsmurf}
We can prove uniform interpolation for \sysduo\ by an easy adaptation of
the argument for the case of \sysunus. The first step is to restate
Lemma~\ref{unijoin}.

\begin{lemma}\label{duojoin} Consider pairwise disjoint finite sets of propositional variables $\pvsb$, 
$\pvsa$ and $\pvsc$. Let $X\subseteq {\lang}(\pvsb,\pvsa\,)$
 be a finite adequate${}^+$ set. Let $\mathcal K$ be a brilliant $\pvsb,\pvsa$-model with root $k_0$, and let
$\mathcal M$ be a brilliant  $\pvsa, \pvsc$-model with root $m_0$. Let:
{\small
\[ \nu  := |\averz{\gamma}{X}{\text{$\gamma$ is a propositional variable, an implication or a lewis implication}}|. \]
}
Suppose that $k_0 \simeq _{2\nu  +1,\pvsa}m_0$. Then, 
there is a brilliant bisimulation $\pvsb$-extension $\mathcal N$  of $\mathcal M$ with root $n_0$ such that 
${\sf Th}_X(n_0) = {\sf Th}_X(k_0)$.      
\end{lemma}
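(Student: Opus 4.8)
The plan is to re-run the proof of Lemma~\ref{unijoin} essentially verbatim, everywhere replacing the pre-Henkin model $\mathbb H^\circ_X$ by the pre-Henkin${}^+$ model $\mathbb H^{\circ+}_X$, and checking that brilliance survives the whole construction. First I would recall the properties of $\mathbb H^{\circ+}_X$ already established: it has strength, ${\tto}\hyph\mathbf p$ and brilliance, and it satisfies the truth lemma ($\varDelta\Vdash\phi$ iff $\phi\in\varDelta$, for $\phi\in X$). I would equip it with the depth function ${\sf d}_X$ defined via its strict inclusion order, and note that, exactly as in the \sysunus\ case, ${\sf d}_X$ is bounded by $\nu$: a strict inclusion between two $X$-prime subsets must introduce a new variable, implication, or Lewis implication, since an $X$-prime set is determined within $X$ by which such formulas it contains — and these are precisely the formulas counted by $\nu$. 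Hence the hypothesis $k_0\simeq_{2\nu+1,\pvsa}m_0$ is exactly what the construction will need.

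Next I would define witnessing triples for pairs $\tupel{\varDelta,m}$ with $\varDelta\in\mathbb H^{\circ+}_X$ and $m\in\mathcal M$, and the model $\mathcal N$ on the set $N$ of pairs admitting one, copying the definitions from Lemma~\ref{unijoin}: $\preceq_{\mathcal N}$ and $\sqsubset_{\mathcal N}$ are the coordinatewise orders (using $\preceq,\sqsubset$ of $\mathbb H^{\circ+}_X$ in the first coordinate and of $\mathcal M$ in the second), and an atom is forced at $\tupel{\varDelta,m}$ iff it is forced at $\varDelta$ or at $m$. The frame conditions of a \sysunus-model are checked coordinatewise as before; in particular irreflexivity of $\sqsubset_{\mathcal N}$ still comes for free from irreflexivity of $\sqsubset_{\mathcal M}$. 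The only clause needing fresh attention is brilliance of $\mathcal N$, and it too follows coordinatewise: if $\tupel{\varDelta,m}\sqsubset_{\mathcal N}\tupel{\varDelta_1,m_1}\preceq_{\mathcal N}\tupel{\varDelta_2,m_2}$, then $\varDelta\sqsubset\varDelta_1\preceq\varDelta_2$ in $\mathbb H^{\circ+}_X$ and $m\sqsubset m_1\preceq m_2$ in $\mathcal M$, so brilliance of each factor gives $\varDelta\sqsubset\varDelta_2$ and $m\sqsubset m_2$; since $\tupel{\varDelta_2,m_2}$ is already a node of $\mathcal N$, this is $\tupel{\varDelta,m}\sqsubset_{\mathcal N}\tupel{\varDelta_2,m_2}$.

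It then remains to reprove Claim~1 ($n_0\simeq_{\pvsa,\pvsc}m_0$, via the projection $\tupel{\varDelta,m}\mathrel{\mathcal B}m$) and Claim~2 (the truth lemma: $\tupel{\varDelta,m}\Vdash\phi$ iff $\phi\in\varDelta$, for $\phi\in X$), and the lemma is immediate from these. Both proofs should go through unchanged: the zig property of $\mathcal B$ is trivial, its zag property uses the same case split (is the new first-coordinate theory equal to the old one, or strictly above it?) and the same bookkeeping $2{\sf d}_X(\ell)+1\le 2{\sf d}_X(k')$ on depths; Claim~2 is the same induction on $X$, the only interesting cases being $\to$ and $\tto$, copied over from Lemma~\ref{unijoin}.

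The hard part — really the only step that is not mere transcription, and the reason the \sysduo\ version must be stated with adequate${}^+$ sets and brilliant models — is verifying that the theory map from $\mathcal K$ into the first coordinate respects $\sqsubset$: that $\ell\sqsupset_{\mathcal K}k$ forces ${\sf Th}_X(\ell)\sqsupset_{\mathbb H^{\circ+}_X}{\sf Th}_X(k)$, which is invoked each time the original argument pushes a node strictly up along $\sqsubset$. Unwinding the definition of $\sqsubset$ in $\mathbb H^{\circ+}_X$, this reduces to showing that $(\psi\tto\chi)\in{\sf Th}_X(k)$ implies $(\psi\to\chi)\in{\sf Th}_X(\ell)$: here I would use brilliance of $\mathcal K$ — every node $\preceq$-above $\ell$ is $\sqsubset$-above $k$, hence forces $\chi$ as soon as it forces $\psi$, so $\ell\Vdash\psi\to\chi$ — together with the fact that $X$ is adequate${}^+$, so $(\psi\to\chi)\in X$ and the truth lemma applies. (The analogous remark for $\mathcal M$ is not needed, since the first coordinate of every node of $\mathcal N$ is the $X$-theory of a $\mathcal K$-node.) Once this is in hand, everything runs as before, and the choice of a $\sqsubset$-maximal witness $\ell$ in the $\tto$ case of Claim~2 is legitimate because $\mathcal K$ is finite and its $\sqsubset$ is a genuine strict partial order.
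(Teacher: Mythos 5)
Your proposal is correct and takes essentially the same route as the paper, whose own proof is just the remark that the argument for Lemma~\ref{unijoin} goes through with $\mathbb H^{\circ+}_X$ in place of $\mathbb H^{\circ}_X$ and that the pairs construction preserves brilliance. Your additional verification -- that brilliance of $\mathcal K$ together with adequacy${}^+$ of $X$ is what guarantees $k\sqsubset_{\mathcal K}\ell$ implies ${\sf Th}_X(k)\sqsubset{\sf Th}_X(\ell)$ in $\mathbb H^{\circ+}_X$ -- is exactly the point the paper leaves implicit, and it is correct.
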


\noindent
So, the only difference here is that we changed the class of models and the notion of adequacy. Then, in the construction of 
the model $\mathcal N$, we use $\mathbb H^{\circ+}$ instead of $\mathbb H^\circ$.
Finally, we note that the pairs construction of $\mathcal N$ does preserve brilliance.

We will give an alternative proof of uniform interpolation for \sysduo\ in Subsection~\ref{retrosmurf}.

\subsection{Bisimulation Quantifiers}
A nice aspect of uniform interpolation, in the cases under consideration,
 is that we can really view it as quantification on the semantical level.
 In the following theorem, we show that both for \sysunus\ and \sysduo\ we can view uniform interpolants
 as quantified formulas. We present the case of \sysduo\ between brackets.

\begin{theorem}[Semantical Characterisation of Pitts' Quantifiers]\label{sempi}
We consider \textup(brilliant\textup) $\widetilde a$-models for some finite set of propositional variables $\widetilde a$. 
Let $q$ be in $\widetilde a$ and let $\phi$ be a formula in $\lang(\widetilde a)$.
Let $m$ be a node of some \textup(brilliant\textup) $\widetilde a\setminus \verz q$-model $\mathcal M$. We have:
\begin{enumerate}[1.]
\item
$m\Vdash \exists\,  q\, \phi$ iff,
for some \textup(brilliant\textup) $\widetilde a$-model $\mathcal N$ with root  $n$, we have 
$m\simeq_{\widetilde a \setminus \verz{q}}n$ and $n\Vdash \phi$.
\item
$m\Vdash \forall q\,\phi$ iff, for all \textup(brilliant\textup) $\widetilde a$-models $\mathcal N$ and 
for all nodes $n$ in $\mathcal N$ with $m\simeq_{\widetilde a\setminus \verz{q}}n$, we have $ n\Vdash \phi$.
\end{enumerate}\end{theorem}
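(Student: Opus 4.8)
The plan is to derive Theorem~\ref{sempi} from Theorem~\ref{unii} (existence and the defining biconditionals of the Pitts quantifiers), from Lemma~\ref{unijoin} (respectively Lemma~\ref{duojoin} in the brilliant case), and from the bounded-bisimulation machinery of Subsection~\ref{bobbysmurf}. Write $\pvsb := \verz q$ and $\pvsa := \widetilde a\setminus\verz q$, and let $\logvar$ be \sysunus\ (resp.\ \sysduo). Passing to the submodel of $\mathcal M$ generated by $m$, we may assume $m$ is the root of $\mathcal M$; this preserves all frame conditions (including brilliance) and changes neither forcing nor $\pvsa$-bisimilarity at $m$. The complexity bounds in Theorem~\ref{unii} are irrelevant here: all we use is that $\exists q\,\phi,\forall q\,\phi\in\lang(\pvsa)$, that $\logvar\vdash\phi\to\exists q\,\phi$ and $\logvar\vdash\forall q\,\phi\to\phi$, and the two defining biconditionals.

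Two directions are one-liners. For ($\Leftarrow$) of~(1): if $\mathcal N$ is a (brilliant) $\widetilde a$-model with root $n$, $m\simeq_{\pvsa}n$ and $n\Vdash\phi$, then $n\Vdash\exists q\,\phi$ because $\mathcal N$ validates $\phi\to\exists q\,\phi$; since $\exists q\,\phi\in\lang(\pvsa)$, invariance of $\lang(\pvsa)$-formulas under $\pvsa$-bisimulation gives $m\Vdash\exists q\,\phi$. Dually, for ($\Rightarrow$) of~(2): if $m\Vdash\forall q\,\phi$ and $m\simeq_{\pvsa}n$ for a node $n$ of a (brilliant) $\widetilde a$-model $\mathcal N$, then $n\Vdash\forall q\,\phi$ by the same invariance, whence $n\Vdash\phi$ since $\mathcal N$ validates $\forall q\,\phi\to\phi$.

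The content lies in ($\Rightarrow$) of~(1) and ($\Leftarrow$) of~(2); both just rerun the model construction from the proof of Theorem~\ref{unii}, now with the \emph{empty} set of ``$\pvsc$-variables'' (legitimate since $\mathcal M$ lives over $\pvsa$), so that the join lemma returns a model over exactly $\pvsa\cup\pvsb = \widetilde a$ that is $\pvsa$-bisimilar to $m$. Fix a finite adequate (adequate${}^+$) set $X \supseteq {\sf Sub}(\phi)\cup\pvsa$, put $\nu := \nico X$, and form $\theta^+$ and $\theta^-$ from $\cocl{2\nu+1}(\pvsa)$ exactly as in that proof. For~(1): applying the defining biconditional of $\exists q\,\phi$ to the $\pvsa$-formula $\theta^+\to\theta^-$ yields $\phi\wedge\theta^+\nvdash_\logvar\theta^-$ (else $\exists q\,\phi\vdash_\logvar\theta^+\to\theta^-$, contradicting $m\Vdash\exists q\,\phi\wedge\theta^+$ and $m\nVdash\theta^-$); by completeness (Subsection~\ref{fhc}) pick a (brilliant) $\widetilde a$-model $\mathcal K$ with root $k\Vdash\phi\wedge\theta^+$, $k\nVdash\theta^-$, so that $k$ agrees with $m$ on $\cocl{2\nu+1}(\pvsa)$ and hence $k\simeq_{2\nu+1,\pvsa}m$. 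Then Lemma~\ref{unijoin} (resp.\ Lemma~\ref{duojoin}) with this $X$ supplies a (brilliant) bisimulation $\pvsb$-extension $\mathcal N$ of $\mathcal M$ with root $n_0$ and ${\sf Th}_X(n_0) = {\sf Th}_X(k)$; since $\phi\in X$ and $k\Vdash\phi$ we get $n_0\Vdash\phi$, while ``bisimulation $\pvsb$-extension'' (with $\pvsc=\emptyset$) means $m\simeq_{\pvsa}n_0$ — the required witness. For~(2) we argue contrapositively: if $m\nVdash\forall q\,\phi$, then, using that $\theta^+$ has the disjunction property (Theorem~\ref{priyes}, since ${\sf Th}_X(m)$ is $X$-prime) and that $m\nVdash\theta^-$, the defining biconditional of $\forall q\,\phi$ forces $\theta^+\nvdash_\logvar\theta^-\vee\phi$ (otherwise $\theta^+\vdash_\logvar\phi$, hence $\theta^+\vdash_\logvar\forall q\,\phi$ and $m\Vdash\forall q\,\phi$); completeness then gives a (brilliant) $\widetilde a$-model $\mathcal K$ with root $k\Vdash\theta^+$, $k\nVdash\theta^-$, $k\nVdash\phi$, whence $k\simeq_{2\nu+1,\pvsa}m$, and the join lemma delivers $\mathcal N$ with root $n_0$, $m\simeq_{\pvsa}n_0$ and $n_0\nVdash\phi$, contradicting the hypothesis of~(2).

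The load-bearing step is the appeal to the join lemma, which is already proved; what remains is essentially bookkeeping — ensuring that the adequate set $X$ fed to it contains all of $\pvsa$ (one pads ${\sf Sub}(\phi)$ by the finitely many variables of $\pvsa$ absent from $\phi$, raising the complexity threshold in $\theta^\pm$ correspondingly), checking that $\pvsc=\emptyset$ is admissible in Lemmas~\ref{unijoin} and~\ref{duojoin}, and, for \sysduo, running everything through $\mathbb H^{\circ+}$ and the brilliant variants so the pairing construction preserves brilliance. None of this needs anything beyond Section~\ref{unifint}.
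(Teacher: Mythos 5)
Your proposal is correct and follows essentially the same route as the paper: both directions of substance are obtained by rerunning the argument for Theorem~\ref{unii} (using the defining biconditionals of $\exists q\,\phi$ and $\forall q\,\phi$, the $\theta^{+}/\theta^{-}$ trick, Theorem~\ref{priyes} for the pre-interpolant case) and then feeding the resulting bounded-bisimilarity $k\simeq_{2\nu+1,\pvsa}m$ into Lemma~\ref{unijoin} (resp.\ Lemma~\ref{duojoin}), exactly as the paper does. The only difference is bookkeeping in how the lemma is instantiated: the paper takes $\pvsa:={\sf PV}(\phi)\setminus\verz q$, $\pvsc:=\widetilde a\setminus(\verz q\cup\pvsa)$ and $X:={\sf Sub}^{(+)}(\phi)$, so no padding of the adequate set is needed, whereas you put all of $\widetilde a\setminus\verz q$ into $\pvsa$, take $\pvsc=\emptyset$ and pad $X$ accordingly (which enlarges $\nu$ but harms nothing); both instantiations satisfy the hypotheses of the lemma, and your explicit treatment of the trivial directions and of passing to the submodel generated by $m$ just makes explicit what the paper leaves tacit.
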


\begin{proof} (1) ``$\Leftarrow$'' Trivial. 

``$\Rightarrow$'' Let $\pvsa:={\sf PV}(\phi)\setminus \verz{q}$ and let $\pvsc := \widetilde a\setminus q,\pvsa$.
We take:
\[\nu  := |\verz{\psi\in {\sf Sub}(\phi) \mid \text{$\psi$ is an atom or an implication of a boxed formula}}|.\]
Suppose $m\Vdash \exists  q\, \phi$. We take:
\begin{itemize}
\item
$\theta^+ :=\bigwedge \verz{\psi \in \cocl{2\nu  +1}(\pvsa\,) \mid m \Vdash \psi}$
\item 
$\theta^- :=\bigvee \verz{\chi \in \cocl{2\nu  +1}(\pvsa\,) \mid m \nVdash \chi}$
\end{itemize}
As in the proof of Theorem~\ref{unii}(1), we have   $\phi, \theta^+ \nvdash \theta^-$. 
Let $\mathcal K$ be a model with root $k$ such that
$k\Vdash \phi,\theta^+$ and $k \nVdash \theta^-$. We find that 
$k \simeq_{2\nu +1,\pvsa}m$. We apply 
Lemma~\ref{unijoin} (Lemma~\ref{duojoin}) to $k$ and $m$ with ${\sf Sub}^{(+)}(\phi)$ in the role of $X$, $\verz{q}$ in 
the role of $\pvsb$, $\pvsa$ in the role of $\pvsa$, and $\pvsc$ in the role of 
$\pvsc$,  
to find a model $\mathcal N$ with root $n$ with:
$m\simeq_{\pvsa,\pvsc}n$ and 
${\sf Th}_{{\sf Sub}(\phi)}(k) = {\sf Th}_{\sf Sub(\phi)}(n)$, and, thus, $n\Vdash \phi$.

\bigskip\noindent
(2) The proof of (2) is similar. \end{proof}

\subsection{Uniform Interpolation via Retraction}\label{retrosmurf}

In \cite{viss:lobs05},  \visser\  studied interpretations in a somewhat more complicated context,
The following theorem from that paper also holds in our context. The concepts used in this subsection
were introduced in Subsection~\ref{smurfsmorf}.

\begin{theorem}\label{burkasmurf}
Suppose $\logvar'$ is a retract of $\logvar$ and we have uniform interpolation for $\logvar$.
Then, we have uniform interpolation for $\logvar'$.
\end{theorem}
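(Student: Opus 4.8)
The plan is to transport the uniform interpolants across the retraction. Fix a \emph{section} $S\colon\logvar'\to\logvar$ based on a translation $\kappa$ and a \emph{retraction} $T\colon\logvar\to\logvar'$ based on a translation $\mu$, with $T\circ S={\sf ID}_{\logvar'}$ (so $\logvar$ is the ambient system and $\logvar'$ the retract; in the intended application $\logvar=\sysunus$ and $\logvar'$ is $\sysduo$ or, via Theorem~\ref{ochtendsmurf}, $\systres$). I record the cheap facts that $\kappa$ and $\mu$ fix propositional variables, commute with the non-modal connectives, and commute with substitution, so that ${\sf PV}(\chi^{\kappa})\subseteq{\sf PV}(\chi)$ and likewise for $\mu$, and that, being interpretations, $S$ and $T$ map theorems to theorems. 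The one step that requires genuine (if routine) work is the observation $(\ast)$: for \emph{every} formula $\chi$ in the language of $\logvar'$ we have $\logvar'\vdash \chi\iff(\chi^{\kappa})^{\mu}$. This is exactly what $T\circ S={\sf ID}_{\logvar'}$ unwinds to: the ``sameness of interpretations'' clause gives $\logvar'\vdash \arbop(\vec p)\iff(\kappa(\arbop))^{\mu}$ for each modal connective $\arbop$ of $\logvar'$, and $(\ast)$ follows by induction on $\chi$, the modal step using extensionality of $\arbop$ together with the commutation of translation with substitution.

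Next, given a $\logvar'$-formula $\phi$ and a finite set $\pvsb$ of variables, I define, in $\logvar'$, $\exists\pvsb\,\phi:=(\exists\pvsb\,\phi^{\kappa})^{\mu}$ and $\forall\pvsb\,\phi:=(\forall\pvsb\,\phi^{\kappa})^{\mu}$, where the interpolants on the right are the ones for $\phi^{\kappa}$ supplied by uniform interpolation for $\logvar$. The variable condition is immediate from ${\sf PV}\big((\exists\pvsb\,\phi^{\kappa})^{\mu}\big)\subseteq{\sf PV}(\exists\pvsb\,\phi^{\kappa})\subseteq{\sf PV}(\phi^{\kappa})\setminus\pvsb\subseteq{\sf PV}(\phi)\setminus\pvsb$, and similarly for $\forall\pvsb\,\phi$. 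For the defining deducibility biconditional of the post-interpolant, I first note $\logvar'\vdash\phi\to\exists\pvsb\,\phi$: apply $T$ to $\logvar\vdash\phi^{\kappa}\to\exists\pvsb\,\phi^{\kappa}$ (which holds by taking $\psi:=\exists\pvsb\,\phi^{\kappa}$ in clause~(b) for $\logvar$), use that $\mu$ commutes with $\to$, and rewrite $(\phi^{\kappa})^{\mu}$ as $\phi$ by $(\ast)$. Now take $\psi$ with ${\sf PV}(\psi)\cap\pvsb=\emptyset$. If $\logvar'\vdash\exists\pvsb\,\phi\to\psi$, chaining with $\logvar'\vdash\phi\to\exists\pvsb\,\phi$ gives $\logvar'\vdash\phi\to\psi$. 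Conversely, if $\logvar'\vdash\phi\to\psi$, apply $S$ to get $\logvar\vdash\phi^{\kappa}\to\psi^{\kappa}$; since ${\sf PV}(\psi^{\kappa})\subseteq{\sf PV}(\psi)$ is disjoint from $\pvsb$, uniform interpolation for $\logvar$ yields $\logvar\vdash\exists\pvsb\,\phi^{\kappa}\to\psi^{\kappa}$; applying $T$ and using $(\ast)$ to replace $(\psi^{\kappa})^{\mu}$ by $\psi$ gives $\logvar'\vdash\exists\pvsb\,\phi\to\psi$. The pre-interpolant is the exact dual: $\logvar'\vdash\forall\pvsb\,\phi\to\phi$ comes from applying $T$ to $\logvar\vdash\forall\pvsb\,\phi^{\kappa}\to\phi^{\kappa}$ and $(\ast)$, and the two directions of the biconditional are handled as above with antecedent and consequent interchanged.

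The only real obstacle is establishing $(\ast)$ and keeping the bookkeeping straight --- which translation is applied first, and the invocation of extensionality in the modal step of its induction; everything else is formal. Two caveats belong in the write-up. First, the explicit complexity bounds of Theorem~\ref{unii} (e.g.\ $\comp{\exists\pvsb\,\phi}\le 2\nu+2$) do not survive verbatim, since the transported interpolant is $(\exists\pvsb\,\phi^{\kappa})^{\mu}$ rather than something built directly from $\phi$; the bound is inflated by a factor depending only on $\kappa$ and $\mu$, so what transfers cleanly is the bare statement of uniform interpolation (existence, the variable restriction, and the deducibility biconditionals). Second, since an isomorphism in $\cat$ is in particular a retraction, the theorem also yields uniform interpolation for any logic definitionally equivalent to one that has it; combined with Theorem~\ref{ochtendsmurf} this gives uniform interpolation for $\sysduo$ and for $\systres$ from the already-established case of $\sysunus$.
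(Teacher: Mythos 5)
Your proof is correct and is essentially the paper's own argument: you transport the interpolant as $(\exists\pvsb\,\phi^{\kappa})^{\mu}$ (the paper's $(\exists \pvsb\; \phi^{\tau_K})^{\tau_M}$) and verify the deducibility biconditional by applying the section, invoking uniform interpolation in $\logvar$, applying the retraction, and rewriting along $T\circ S={\sf ID}_{\logvar'}$. Your explicit isolation of $(\ast)$ (which the paper uses silently when replacing $\psi^{\tau_K\tau_M}$ by $\psi$) and your caveat about the complexity bounds are just slightly more detailed bookkeeping, matching the paper's separate discussion of interpolant complexity after Corollary~\ref{hobbysmurf}.
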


\begin{proof}
Suppose $K:\logvar'\to \logvar$ and $M:\logvar\to \logvar'$ and $M\circ K = {\sf ID}_{\logvar'}$.
Let $\tau_K$ be a translation on which $K$ can be based and let $\tau_M$ be translation on which $M$ can be based. 

We treat the case of the post-interpolant. The case of the pre-interpolant is similar or, alternatively, can be reduced to that of the
pre-interpolant.
Let $\phi$ and $\psi$ be in the language of $\logvar'$.
Suppose  that the variables from $\pvsb$ do not occur in $\psi$. 
It follows that he variables from $\pvsb$ do not occur in $\psi^K$. 
We have:
\begin{eqnarray*}
\logvar'+\phi \vdash \psi & \To& \logvar + \phi^{\tau_K} \vdash \psi^{\tau_K} \\
& \To & \logvar+\exists \pvsb\; \phi^{\tau_K} \vdash \psi^{\tau_K} \\
& \To & \logvar' + (\exists \pvsb\; \phi^{\tau_K})^{\tau_M}\vdash \psi^{\tau_K\tau_M} \\
& \To & \logvar' + (\exists \pvsb\; \phi^{\tau_K})^{\tau_M}\vdash \psi
\end{eqnarray*}
We also have $\logvar + \phi^{\tau_K} \vdash \exists \pvsb\; \phi^{\tau_K}$.
So,
$\logvar' + \phi^{\tau_K\tau_M} \vdash (\exists \pvsb\; \phi^{\tau_K})^{\tau_M}$.
It follows that $\logvar' + \phi \vdash (\exists \pvsb\; \phi^{\tau_K})^{\tau_M}$.

We may conclude that $(\exists \pvsb\; \phi^{\tau_K})^{\tau_M}$ can be taken as the desired uniform post-interpolant for $\phi$ in
$\logvar'$.
\end{proof}

\noindent
We have the following corollary by combining Theorems~\ref{ochtendsmurf}, \ref{unii} for \sysunus\ and \ref{burkasmurf}.

\begin{corollary}\label{hobbysmurf}
We have uniform interpolation for \sysduo\ and \systres.
\end{corollary}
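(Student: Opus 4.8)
The plan is to derive both claims purely formally, combining the structural data of Theorem~\ref{ochtendsmurf}, the uniform interpolation of \sysunus\ (Theorem~\ref{unii}), and the transfer principle of Theorem~\ref{burkasmurf}; no fresh semantic or proof-theoretic work is required.

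First I would treat \sysduo. By Theorem~\ref{ochtendsmurf}(i), \sysduo\ is a retract of \sysunus: the interpretations ${\sf Triv}:\sysduo\to\sysunus$ and ${\sf Emb}:\sysunus\to\sysduo$ satisfy ${\sf Emb}\circ{\sf Triv}={\sf ID}_{\sysduo}$, which is precisely the hypothesis ``$M\circ K={\sf ID}_{\logvar'}$'' of Theorem~\ref{burkasmurf} with $\logvar:=\sysunus$, $\logvar':=\sysduo$, $K:={\sf Triv}$, $M:={\sf Emb}$. Since Theorem~\ref{unii} provides uniform interpolation for \sysunus, Theorem~\ref{burkasmurf} delivers uniform interpolation for \sysduo. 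As a sanity check one could instead re-run the argument of Subsection~\ref{uniunus} verbatim with Lemma~\ref{duojoin} in place of Lemma~\ref{unijoin}, as sketched in Subsection~\ref{microsmurf}; the retraction route is the shorter one.

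Next I would treat \systres. By Theorem~\ref{ochtendsmurf}(ii), \sysduo\ and \systres\ are synonymous, so in particular ${\sf LB}\circ{\sf BL}={\sf ID}_{\systres}$ where ${\sf BL}:\systres\to\sysduo$ and ${\sf LB}:\sysduo\to\systres$; this exhibits \systres\ as a retract of \sysduo\ (take $\logvar:=\sysduo$, $\logvar':=\systres$, $K:={\sf BL}$, $M:={\sf LB}$). Having just established uniform interpolation for \sysduo, a second application of Theorem~\ref{burkasmurf} yields uniform interpolation for \systres. Alternatively one can short-circuit the intermediate step: the composites ${\sf Triv}\circ{\sf BL}:\systres\to\sysunus$ and ${\sf LB}\circ{\sf Emb}:\sysunus\to\systres$ are again interpretations (interpretations compose, since \cat\ is a category), and $({\sf LB}\circ{\sf Emb})\circ({\sf Triv}\circ{\sf BL})={\sf LB}\circ{\sf ID}_{\sysduo}\circ{\sf BL}={\sf ID}_{\systres}$, so \systres\ is directly a retract of \sysunus\ and one application of Theorem~\ref{burkasmurf} suffices.

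The only point worth flagging is conceptual rather than technical: synonymy is strictly stronger than being a retract, so the isomorphism ${\sf LB},{\sf BL}$ may be read as a retraction pair in whichever direction one pleases, and the cancelled composite ${\sf Emb}\circ{\sf Triv}$ (respectively ${\sf LB}\circ{\sf BL}$) must be matched with the correct one of the two equations in Theorem~\ref{ochtendsmurf}. There is no genuine obstacle; the substance of the corollary lies entirely in Theorems~\ref{unii} and~\ref{burkasmurf}.
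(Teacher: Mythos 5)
Your proposal is correct and takes essentially the same route as the paper, which obtains the corollary precisely by combining Theorem~\ref{ochtendsmurf}, Theorem~\ref{unii} for \sysunus, and the retraction-transfer Theorem~\ref{burkasmurf}, with the directions of ${\sf Triv},{\sf Emb},{\sf BL},{\sf LB}$ matched exactly as you do. Your ``short-circuit'' retraction ${\sf Triv}\circ{\sf BL}:\systres\to\sysunus$, ${\sf LB}\circ{\sf Emb}:\sysunus\to\systres$ is moreover the very pair the paper itself uses in Subsection~\ref{retrosmurf} when estimating the complexity of the \systres-interpolants.
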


\noindent
Corollary~\ref{hobbysmurf} also gives us an estimate of the complexity of the uniform interpolants.
We first consider the case of $\sysduo$. Let us write:
\begin{itemize}
\item
$\mathfrak a(\phi) := |\verz{\pi \in {\sf Sub}(\phi) \mid \text{$\phi$ is an atom}}|$,\\
 $\mathfrak a^+(\phi) := |\verz{\pi \in {\sf Sub}^+(\phi) \mid \text{$\phi$ is an atom}}|$
\item
$\mathfrak i_\phi := |\verz{\psi\in {\sf Sub}(\phi) \mid \text{$\psi$ is an implication}}|$,\\
$\mathfrak i^+_\phi := |\verz{\psi\in {\sf Sub}^+(\phi) \mid \text{$\psi$ is an implication}}|$,
\item
$\mathfrak l_\phi := |\verz{\psi\in {\sf Sub}(\phi) \mid \text{$\psi$ is a Lewis implication}}|$,\\
$\mathfrak l^+_\phi := |\verz{\psi\in {\sf Sub}^+(\phi) \mid \text{$\psi$ is a Lewis implication}}|$,
\item
$\mathfrak n_{\phi}= \mathfrak a_{\phi}+\mathfrak i_{\phi} + \mathfrak l_{\phi}$, 
$\mathfrak n^+_{\phi}= \mathfrak a^+_{\phi}+\mathfrak i^+_{\phi} + \mathfrak l^+_{\phi}$.
\end{itemize}

\noindent 
We note that $\mathfrak a^+_{\phi} = \mathfrak a_{\phi}$, 
$\mathfrak i^+_{\phi} \leq \mathfrak i_{\phi}+ \mathfrak l_{\phi}$, $\mathfrak l^+_{\phi} = \mathfrak l_{\phi}$
and $\mathfrak n^+_{\phi} \leq \mathfrak n_{\phi} + \mathfrak l_{\phi}$.

The post-interpolant in \sysduo\  is $(\exists \pvsb\;\phi^{\sf triv})^{\sf id} = \exists \pvsb\;\phi^{\sf triv}$ and 
$\comp{\exists \pvsb\;\phi^{\sf triv}}\leq 2\cdot\mathfrak n_{\phi^{\sf triv}}+2$. 
It is easy to see that:
\[ \mathfrak n_{\phi^{\sf triv}}  =  \mathfrak n^+_{\phi} \leq  \mathfrak a_{\phi} + \mathfrak i_\phi + 2 \mathfrak l_{\phi}\]

\noindent We have similar considerations for the pre-interpolant. The interpretations provided by Theorem~\ref{ochtendsmurf}
witnessing  the retraction from \sysunus\ to \systres\ are $K :={\sf Triv}\circ {\sf BL}: \systres \to \sysunus$ and
$M := {\sf LB}\circ{\sf Emb}:\sysunus \to \systres$. The translation that supports $K$ is
${\sf triv}\circ {\sf lb} :=\ \opr \mapsto (\top \tto (\top \to p_0))$. It is immediate that the simpler translation ${\sf red} :=\opr \mapsto (\top \tto p_0)$
also supports $K$.  We  note that $ {\sf lb}\circ {\sf id} = {\sf lb}$ supports $M$.
The post-interpolant for \systres, can be taken $(\exists \pvsb\;\phi^{\sf red})^{\sf lb}$. It is easy to see that
the complexity for this interpolant is estimated by \[|\verz{\psi \in {\sf Sub}(\phi)\mid\text{$\phi$ is a variable or an implication or a boxed formula}}|.\]


\section{Arithmetical Interpretations}\label{arint}
In this section, we consider arithmetical interpretations of our systems \sysunus, \sysduo\ and \systres.
The section is purely expository. It surveys part of
\cite{viss:comp82}, \cite{viss:eval85}, \cite{viss:subs02}, \cite{iemh:moda01}, \cite{iemh:pres03}, \cite{iemh:prop05}, 
 \cite{lita:lewi18}, \cite{viss:prov19}, \cite{mojt:rela22}, \cite{mojt:prov22}.
 An excellent survey of part of the material is \cite{arte:prov04}.
Papers of related interest are \cite{dejo:embe96} and \cite{viss:inte98} and \cite{arde:sigm18}.

\subsection{Basics}
Consider an arithmetical theory\footnote{The restriction to arithmetical
theories is not really needed. We can formulate the framework for a much wider class of theories.
The present set-up, however, suffices for this paper.}
 $U$ and a language  $\lang(\arbop_0,\dots,\arbop_{n-1})$.
\emph{An arithmetical interpretation} $\mathcal I$ of $\lang(\arbop_0,\dots,\arbop_{n-1})$ is a mapping of each  $\arbop_i$ of arity $n_i$ to
an arithmetical formula in the variables $v_0,\ldots,v_{n_i-1}$. \emph{An arithmetical assignment} 
$\sigma$ is a mapping of
the propositional variables to arithmetical sentences.\footnote{Our terminology here is not the usual one. Sometimes `interpretation'   or `realisation'
is used, where we use `assignment'. But, then, mostly, what we call `interpretation' in the present context does not receive a name.}
We define $(A)^{\mathcal I,\sigma}$ by recursion as follows:
\begin{itemize}
\item
$(p_i)^{\mathcal I,\sigma} := \sigma(p_i)$,
\item
$(\cdot)^{\mathcal I,\sigma}$ commutes with the connectives of \ipc,
\item
$(\arbop_j(\phi_0,\dots,\phi_{n_j-1}))^{\mathcal I,\sigma} := 
\mathcal I(\arbop_j)(\gn{(\phi_0)^{\mathcal I,\sigma}}, \dots, \gn{(\phi_{n_j-1})^{\mathcal I,\sigma}})$.
\end{itemize}
We define $\logcon_U^{\mathcal I}$ as the set of all $\phi$ such that, for all $\sigma$, we have $U \vdash (\phi)^{\mathcal I,\sigma}$.
We note that $\logcon_U^{\mathcal I}$ is always closed under substitution and Modus Ponens, but at this level of generality little more of interest can be said.

The provability logic of $U$ is $\logcon_U^{\mathfrak I_{0,U}}$ for the language with the single modal operator $\opr$, where
$\mathfrak I_{0,U}(\opr) = {\sf prov}_U(v_0)$. Here {\sf prov} is a standard arithmetisation of the provability predicate.\footnote{We 
are assuming that the set of G\"odel numbers of the axioms of $U$ is given by a designated arithmetical formula $\alpha$.
When we write ${\sf prov}_U$, we really mean ${\sf prov}_\alpha$. In this paper, we will assume that $\alpha$ is sufficiently simple, specifically that it is
$\Sigma_1^{\sf b}$.} 
As soon as $U$ contains a suitable constructive version of Buss' theory ${\sf S}^1_2$,  we find that the
provability logic of $U$ contains $\ik+\lsb{\Lo}$, the constructive version of L\"ob's Logic.\footnote{Inspection of the
verification of the L\"ob Conditions in classical ${\sf S}^1_2$ reveals that this verification is fully constructive.
See \cite{buss:boun86} for the main ingredients of the proof.} We note that this presupposes that the representation of the axiom set
is sufficiently simple.

In the definition of provability logic, the parameter $U$ occurs both as base theory and in the determination of the interpretation of
$\opr$. This is the reason that the provability logic of a theory need not be monotonic in that theory. As soon as we restrict ourselves
to extensions of classical Elementary Arithmetic, {\sf EA}, the mapping $U \mapsto \logcon_U^{\mathfrak I_{0,U}}$ becomes monotonic.
However, this surprising fact rests on the very specific properties of \emph{classical} provability logic.

We will focus on \emph{$\Sigma_1$-preservativity} as arithmetical interpretation of $\tto$. This notion is defined as follows.
\begin{itemize}
\item
$A \tto_{U,\Sigma^0_1} B$ iff, for all $\Sigma_1^0$-sentences $S$,  if $U \vdash S\to A$, then $U\vdash S \to B$. 
\end{itemize}
For a discussion of some other interpretations, see \cite{lita:lewi19}.
We take $\mathfrak I_{1,U}(\tto) := {\sf pres}_{U,\Sigma^0_1}(v_0,v_1)$, where
${\sf pres}_{U,\Sigma^0_1}(v_0,v_1)$ is some standard arithmetisation of
$\Sigma^0_1$-pre\-ser\-va\-tivity over $U$. 

For extensions $U$ of the intuitionistic version {\sf i-EA} of Elementary Arithmetic, {\sf EA}, the logic
$\logcon_U^{\mathfrak I_{1,U}}$ extends \iam, i.e., \ia\ minus \di,  plus \lsa{L} (and, hence, \lsa{4}).
Moreover, $\logcon_U^{\mathfrak I_{1,U}}$ satisfies Montagna's Principle:
\begin{description}
\item[\mont]
$(\phi \tto \psi) \to ((\opr\chi \to \phi)\tto(\opr\chi \to \psi))$
\end{description}
We do, however, not generally have \di, as is witnessed by the case of {\sf PA}.
If $U$ is, e.g., {\sf HA} or ${\sf HA}^\ast$ (see below), we do have \di.\footnote{This
follows from the fact that these theories are closed under q-realisability. See \cite{lita:lewi18} for an explanation.}
The relation of 
$\Sigma^0_1$-preservativity is technically a very useful notion in studying the provability
logic of {\sf HA}. Its logic for interpretations in {\sf HA} is currently unknown.
 
 \subsection{The Provability Logic of Heyting Arithmetic}
The provability logic of {\sf HA} contains many extra principles over $\ik+\lsb{\Lo}$. Here are some examples.
\begin{itemize}
\item
$\opr(\psi \vee \chi) \to \opr(\psi\vee \opr\chi)$ \hspace*{1cm} (Leivant's Principle)
\item
$\opr \neg\neg\,\opr\phi \to \opr\opr\phi$ \hspace*{1.86cm} (Formalised Markov's Rule)
\item
$\opr (\neg\neg\,\opr\phi \to \opr\phi) \to \opr\opr\phi$ \hspace*{0.64cm} (Formalised Anti-Markov's Rule)
\end{itemize}
There are many more examples of principles. 
We note that in $\ik+\lsb{\Lo}+{\sf Le}$, where {\sf Le} is Leivant's Principle, we can derive
\begin{eqnarray*}
\vdash \opr(\opr\bot \vee \neg\, \opr\bot) & \To & \vdash \opr(\opr\bot \vee \opr \neg\, \opr\bot) \\
& \To & \vdash \opr(\opr\bot \vee \opr\bot) \\
& \To &  \vdash \opr\opr\bot
\end{eqnarray*}

This tells us that over classical provability logic {\sf GL}, Leivant's Principle is equivalent to $\opr\opr\bot$.
The Formalised Anti-Markov's Rule is similarly equivalent to $\opr\opr\bot$ even over the classical theory {\sf K}.

These two examples illustrate non-monotonicity: {\sf PA} extends {\sf HA} and both Leivant's Principle and the 
Formalised Anti-Markov's Rule  are in 
$\logcon_{\sf HA}^{\mathfrak I_{0,{\sf HA}}}$, but not in $\logcon_{\sf PA}^{\mathfrak I_{0,{\sf PA}}}$.

Here are some results on the provability logic of {\sf HA}.
\begin{itemize}
\item
Rosalie Iemhoff gave a characterisation of the Admissible Rules of \ipc. See \cite{iemh:admi01}.
In combination with the results from \cite{viss:rule99}, this also gives us a characterisation of the
verifiably admissible rules of Heyting Arithmetic. Thus, we find a characterization of all
the formulas of the form $\opr\phi \to \opr\psi$, where $\phi$ and $\psi$ are purely propositional,
that are in $\logcon_{\sf HA}^{\mathfrak I_{0,{\sf HA}}}$.
\item
Albert Visser gave a characterisation of the closed fragment of the provability logic of {\sf HA}. 
See \cite{viss:subs02}. For another approach to the same problem, see also \cite{viss:close08}.
\item
Mohammad Ardeshir and Mojtaba Mojtahedi characterised the provability logic of $\Sigma^0_1$-substitutions
in {\sf HA}. See their article \cite{arde:sigm18}. 
\item
Mojtaba Mojtahedi claims a characterisation of the provability logic of {\sf HA}. This characterisation implies its
decidability. See \cite{mojt:rela22} and \cite{mojt:prov22}. The result and its proof look very plausible, but at the time
of writing there is no full check of the proof by a third party.
\end{itemize}

\subsection{The Completeness Principle}
Heyting Arithmetic {\sf HA} has a unique extension ${\sf HA}^\ast$ such that,
{\sf HA}-verifiably, we have:
${\sf HA}^\ast = {\sf HA} + (A \to \opr_{\,{\sf HA}^\ast} A)$. 
This theory was introduced in \cite{viss:comp82}. 
We will call the scheme $(A \to \opr_T A)$ the Completeness Principle for $T$ or ${\sf CP}_T$.

The theory ${\sf HA}^\ast$ is, perhaps, not philosophically a convincing theory. However,
it is rich in applications as an auxiliary tool. 
{\small
\begin{itemize}
\item
Michael Beeson  proved the independence of
the principles {\sf KLS} and {\sf MS}, concerning the continuity of the effective operations,
 over {\sf HA} in his paper \cite{bees:nond75} using the method of fp-realisability. Albert Visser simplified the proof using
 ${\sf HA}^\ast$ in \cite{viss:comp82}. 
 \item
 ${\sf HA}^\ast$ can be employed to give a proof of de Jongh's Theorem for {\sf HA} using $\Sigma^0_1$-substitutions. See \cite{viss:subs02}.
 \item
 ${\sf HA}^\ast$ can be used to show the schematic optimality of the theorem that ${\sf HA}$ proves that, if a Boolean combination of
 $\Sigma^0_1$-sentences is {\sf HA}-provable, then its best approximation in the {\sf NNIL}-formulas is also
 {\sf HA}-provable. See \cite{viss:subs02}.
 \item
${\sf HA}^\ast$ can be used to show that the Heyting algebra of {\sf HA} has a subalgebra on three generators that is complete recursively
enumerable.
 See \cite{dejo:embe96}.
 \item
 Mohammad Ardeshir and Mojtaba Mojahedi \cite{arde:sigm18} proved a 
 characterisation of the provability logic of {\sf HA} for  $\Sigma^0_1$-substitutions.
 Jetze Zoethout  and  Albert Visser \cite{viss:prov19} showed how to reprove the result using the $(\cdot)^\ast$-construction of which
 ${\sf HA} \mapsto {\sf HA}^\ast$ is a special case, as a tool.
See also the brief sketch below.
 \end{itemize}
 }
 
 \noindent
 The theory ${\sf HA}^\ast$ is the unique solution of the equation ${\sf HA}^\ast =_{\sf HA} {\sf HA} + {\sf CP}_{{\sf HA}^\ast}$,
 where $=_{\sf HA}$ indicates that {\sf HA} verifies the identity. We can also obtain ${\sf HA}^\ast$ by the following construction.
 Consider an arithmetical theory $U$ that extends i-{\sf EA}. We assume $U$ is given with a suitable elementary presentation of its axioms.
 We define a translation $(\cdot)^U$ from the arithmetical language to itself as follows: the translation commutes with atoms,
 conjunction, disjunction and the existential quantifier. Moreover,
 \begin{itemize}
 \item
  $(A\to B)^U := (A^U \to B^U) \wedge \opr_U(A^U\to B^U)$,
  \item
   $(\forall x \, A)^U := \forall x\, A^U \wedge \opr_U \forall x\, A^U$.
   \end{itemize}
  We define $U^\ast$ to be the theory given by $\verz{A \mid U \vdash A^U}$. One can verify that
  $U^\ast \vdash {\sf CP}_{U^\ast}$ and that $U^\ast$ is i-{\sf EA}-verifiable a subtheory of $U+{\sf CP}^\ast_{U^\ast}$.
  In the case of {\sf HA} we also find that ${\sf HA}^\ast$ contains {\sf HA}.

It is immediate that this logic of  contains  $\systres$. One might hope that the provability logic of ${\sf HA}^\ast$ is precisely \systres.
However, as was pointed out to  \visser\  by Mojtaba Mojtahedi, the provability logic
of  ${\sf HA}^\ast$ strictly extends $\systres$. 
Here is an example:
\[  \opr(\opr\bot \to (\neg \, \phi \to (\psi\vee \chi))) \to \opr(\opr\bot \to ((\neg\, \phi \to \psi) \vee (\neg\,\phi \to \chi))).\footnote{This example
reflects the fact that ${\sf HA}+{\sf incon}({\sf HA})$ extends ${\sf HA}^\ast$ and is closed under the Friedman translation.}  \] 

\noindent
Thus, the provability logic of ${\sf HA}^\ast$ reflects some of the complexity that we find in the provability logic of
{\sf HA}. Here are a few quick insights. 
First, the closed fragment of the provability logic of ${\sf HA}^\ast$ can be easily seen to be the closed fragment of \systres.
Secondly, de Jongh and  \visser\  in \cite{dejo:embe96} show that the admissible rules for assignments of propositional logic
in ${\sf HA}^\ast$ are precisely the derivable rules of {\sf IPC}. So, we have, for propositional formulas $\phi$ and $\psi$:
\begin{eqnarray*}
\logcon_{{\sf HA}^\ast}^{\mathfrak I_{0,{\sf HA}^\ast}} \vdash \opr\phi \to \opr\psi & \To &
 \forall \sigma \;{\sf HA}^\ast \vdash {\sf prov}_{{\sf HA}^\ast}(\gn{\phi^\sigma})
 \to {\sf prov}_{{\sf HA}^\ast}(\gn{\psi^\sigma}) \\
 & \To &  \forall \sigma \; ({\sf HA}^\ast\vdash \phi^\sigma\; \To\; {\sf HA}^\ast\vdash \psi^\sigma)  \\
 & \To & \ipc \vdash \phi \to \psi \\
 &\To& \logcon_{{\sf HA}^\ast}^{\mathfrak I_{0,{\sf HA}^\ast}} \vdash \opr\phi \to \opr\psi
\end{eqnarray*}

\noindent
The second step uses the $\Pi^0_2$-soundness of ${\sf HA}^\ast$ and the third step the de Jongh-Visser result.

We can use the $(\cdot)^\ast$-construction fruitfully by shifting to a different line of questioning.
Instead of a question of the form
\begin{description}
\item[A]
 \emph{What is the provability logic of a given theory $U$?}
 \end{description}
we invert the direction of our gaze, and ask a question of the form:
\begin{description}
\item[B]
 \emph{Suppose we are given a modal logic
$\logvar$.  Can one find an arithmetical theory $U$ such that the  provability logic
of $U$ is precisely $\logvar$?}
\end{description}
More precisely,  question (B) should be asked about presentations of the axiom set of $U$, since, generally, the question of the
provability logic of a theory is intensional.

 In the classical context, some of the work of Taishi Kurahashi is precisely about questions of
 form (B). See \cite{kura:arit18a} and \cite{kura:arit18b}.

We may ask question (B), for the case where $\logvar$ is \systres.  We have already seen that
${\sf HA}^\ast$ is not the desired answer. However, there is a hint that suggests where
the answer could lie. In the classical context, arithmetical completeness theorems are usually
proved using Solovay's method. This method, involves, roughly, `embedding' Kripke models in arithmetic. 
The first author, in his PhD Thesis \cite{viss:aspe81}  showed that  Kripke models 
for \systres, in the case $\preceq$ and $\sqsubseteq$ coincide, can be embedded Solovay-style in ${\sf HA}^\ast$.
Given that the logic of this restricted class of models is precisely {\sf KM}, a system introduced by
 Kutnetzov and Muravitsky (see \cite{KuznetsovM86:sl}), it follows that {\sf KM} is an upper bound for the provability logic
of ${\sf HA}^\ast$. Since,  $ {\sf KM}+ \opr\bot$ implies classical propositional logic and ${\sf HA}^\ast+ \opr_{{\sf HA}^\ast}\bot$
does not, {\sf KM} is not sound for provability interpretations in ${\sf HA}^\ast$.\footnote{In fact the propositional logic of
{\sf HA} plus any independent $\Sigma_1$-sentence is still Intuitionistic Propositional Logic.
For example, Smory\'nski's proof of de Jongh's Theorem for {\sf HA} in \cite[5.6.13--5.6.16]{smor:appl73}, works as well
for such extensions. Many other proofs of de Jongh's Theorem for {\sf HA} can be similarly adapted. It is an open question
whether any consistent finite extension of {\sf HA} still satisfies de Jongh's Theorem.} 
 So, how proceed for the cases where $\preceq$ and $\sqsubseteq$ are different?
What one needs is a second provability notion to handle $\preceq$. Some reflection suggests that
we need ${\sf S}_{\opr}$  for this second, weaker kind of provability. It turned out that this line of reasoning leads to
the answer. The solution was given by
Jetze Zoethout and  \visser\  in \cite{viss:prov19}. In the remainder of this subsection, we sketch the essence of that solution. 

The second provability relation, employed by Zoethout and \visser, is \emph{slow provability}. This notion was introduced by
Sy Friedman, Michael Rathjen and Andreas Weiermann in their paper \cite{frie:slow13}.\footnote{A somewhat similar
idea, in the context of Elementary Arithmetic, had already been considered in \cite{viss:seco12}.}

Let ${\sf F}_\alpha$ be the fast 
growing hierarchy due to Stan Wainer \cite{wain:clas70} and Helmuth Schwichtenberg \cite{schw:klas71}. 
We say that a number $x$ is \emph{small} when ${\sf F}_{\epsilon_0}(x)$ exists. Of course, 
\emph{we} know that all numbers all small, but {\sf HA} does not know it.
Let ${\sf HA}^{\sf s}$ be {\sf HA} with its axioms restricted to the axioms with small G\"odel numbers.   
We call provability in ${\sf HA}^{\sf s}$: \emph{slow provability}.\footnote{We think this is a somewhat
unhappy terminology, since it suggests that the proofs rather than just the axioms are small. However,
such is the legacy.}
We note that slow provability is a Fefermanian notion, since, in accordance with
Feferman's program in \cite{fefe:arit60}, we keep the arithmetisation of provability fixed and only tamper
with the representation of the axiom set.\footnote{A different approach to slow provability, following a suggestion by Fedor Pakhomov,
 is given in \cite{viss:abso20}. See also \cite{sier:prov23}. 
It has the advantage that the relevant properties of slow provability are easily verifiable in a wide range of theories.}

We consider ${\sf HA}^{{\sf s}\ast}$.
The theory ${\sf HA}^{{\sf s}\ast}$ need not be an extension of ${\sf HA}^{\sf s}$.
However, we do have that, {\sf HA}-verifiably,  the theory ${\sf HA}^{{\sf s}\ast}$ is a subtheory of ${\sf HA}^{\sf s}+{\sf CP}_{{\sf HA}^{{\sf s}\ast}}$.
We note what the instances of  ${\sf CP}_{{\sf HA}^{{\sf s}\ast}}$ here are not constrained to being small.
Finally, we extend  ${\sf HA}^{{\sf s}\ast}$ to, say, $\widetilde {\sf HA}$ by adding the ordinary axiom set of {\sf HA}.
We can show that $\widetilde {\sf HA}$ is, {\sf HA}-verifiably, ${\sf HA}+{\sf CP}_{{\sf HA}^{{\sf s}\ast}}$.
So,  $\widetilde {\sf HA}$ is a non-slow theory that proves the completeness principle of a somewhat slower sibling.

Finally, ${\sf HA}^{{\sf s}\ast}$-provability is used for our weaker notion of provability and
$\widetilde{\sf HA}$-provability is the stonger one.

Using the ideas sketched above, we find that $\systres$ is precisely the provability logic of
$\widetilde {\sf HA}$. 

\begin{question}
Does the $\Sigma^0_1$-preservativity logic of  $\widetilde {\sf HA}$ contain any principles over and above
those of \sysunus? If not can we find a suitable sibling of $\widetilde {\sf HA}$ that yields the desired completeness
result?

A hopeful idea would be that the substitutions considered by Zoethout and  \visser\  are already sufficient to give the
desired completeness result. However, the arithmetical interpretations of modal formulas considered by Zoethout and  \visser\ 
are all $\widetilde {\sf HA}$-provably equivalent to $\Sigma^0_1$-sentences. It follows that these interpretations cannot
distinguish between $\phi \tto \psi$ and $\opr(\phi\to\psi)$. So, something new is needed.\footnote{The usual example to separate
provable implication from $\Sigma^0_1$-preservativity is $\Sigma^0_2$.}
\end{question}


\noindent The Zoethout-Visser result leads to an alternative proof of the characterisation of the provability logic of
{\sf HA} for $\Sigma^0_1$-substitutions, a result that is originally due to Mohammad Ardeshir {\&} Mojtaba Mojtahedi \cite{arde:sigm18}.


\section{\systres\ in Computer Science, Type Theory and Categorical Logic} \label{sec:nakano}

 \newcommand{\interm}{\mbox{\textcolor{uuxred}{$\opr$}}}

The work of Nakano on the \emph{modality for recursion} \cite{Nakano00:lics,Nakano01:tacs} 
has brought the axioms of  \systres\ to the attention of researchers in type theory and 
computer science.  Related modalities have subsequently appeared as, e.g., 
\begin{itemize}
 \item 
 the  \emph{guardedness type constructor} \cite{AtkeyMB13:icfp,AbelV14:aplas}, 
\item 
 the  \emph{approximation modality}  \cite{AppelMRV07:popl},
\item
  the \emph{next-step modality/next clock tick} \cite{KrishnaswamiB11:icfp,KrishnaswamiB11:lics} 
\item
 or perhaps most commonly the \emph{later operator} \cite{BentonT09:tldi,BirkedalM13:lics,BirkedalMSS12:lmcs,JaberTS12:lics,BizjakEA16:csl,CloustonEA17:lmcs,MogelbergP19:mscs,BirkedalEA19:JAR}.
\end{itemize}

\noindent This incomplete list of references  illustrates the post-Nakano success of L\"ob modalities in CS-oriented type theories. Few of them follow Nakano's exact subtyping setup (presented in detail in Appendix \ref{nakorig}). The motivation varies from providing a \emph{lightweight typing discipline for enabling productive coprogramming} \cite{AtkeyMB13:icfp} to solving ``circular'' equations arising in semantical analysis of programming languages, e.g., in proofs of type safety, 
 contextual approximation and equivalence of programs  (via \emph{binary} logical relations \cite{DreyerAB11:lmcs}). In theoretical computer science, such semantical analysis is often conducted in category-theoretic terms.  
  Birkedal et al. \cite{BirkedalMSS12:lmcs} proposed the \emph{topos of trees} (that is, the topos of presheaves on $\omega$; see below for a detailed discussion) as a natural model of ``guarded recursive definitions of both recursive functions and relations as well as recursive types''. Furthermore, they add
 \begin{quote}
\dots the external notion provides for a simple algebraic theory of fixed points
for not only morphisms but also functors (see Section 2.6), whereas the internal notion is useful when working in the internal logic \cite{BirkedalMSS12:lmcs}.
\end{quote}
In order to parse these statements, let us recall from standard references \cite{LambekS86:ihocl,Pitts00:catl} that 
 there are several ways in which  categories with sufficiently rich structure encode constructive logics, in particular the intuitionistic calculus.
This stems from a more fundamental fact: propositions can be interpreted in terms of  types (\S\ \ref{sec:proptypes}), in terms of predicates/properties  (\S\ \ref{sec:proppred}) or even in terms of truth values  (\S\ \ref{sec:propomega}). A lot can be said about the status and importance of (categorical) strong L\"ob modalities in each of  these cases.  As a first approximation: the distinction between the use of L\"ob-style modalities in these contexts resembles the distinction between the use of the lax (\lna{PLL}) modality in the Curry-Howard correspondent of Moggi's computational lambda calculus  \cite{BentonBP98:jfp} and its use as ``geometric modality'' to generalize Grothendieck topology to arbitrary toposes \cite{Goldblatt81:mlq}.

\subsection{$\systres$ Endofunctors: Guarded Fixpoint Categories} \label{sec:proptypes}
 The Curry-Howard-Lambek interpretation of \ipc\ in (bi)cartesian closed categories maps propositional connectives directly to operations on objects (which interpret propositional formulas): conjunction to product, disjunction to coproduct, and implication to formation of the exponential object (function space). 
Under the CS view of this interpretation, objects of the category are semantic counterparts of types of a suitable programming language and morphisms between them (which logically correspond to proofs) are  semantic counterparts of programs.

If propositions correspond to types, and these correspond to objects in a given (bi)cartesian (closed) category, then modalities obviously correspond to endofunctors on the category in question \cite{BiermanP00:sl,BellinPR01:m4m,dePaivaR11}. In particular, Milius and Litak \cite{MiliusL17:fi} characterize what one might call strong L\"ob endofunctors\footnote{The paper has been written without employing explicit proof-theoretic terminology and hiding logical aspects and inspiration.} in terms of \emph{guarded fixpoint operators}. First of all, the category $\catC$ in question needs to model  basic type-theoretic (propositional) constructs; at the very least, it needs to be cartesian (equipped with products), though all models of practical interest  are cartesian closed (equipped with exponentials). Furthermore, $\catC$ needs to be a categorical model of $\C$. That is, we need to specify a \emph{pointed} (\emph{delay}) \emph{endofunctor} $\ibox: \catC \to
  \catC$, i.e., one equipped with a natural transformation $\point: \Id \to
  \ibox$. 
The reader is asked to note here that there are several different categorical modalities in this section ($\ibox$,  $\pbmod{\gamma}{X}$ and $\interm$), between which we are making subtle notational distinctions.

\begin{definition}
  \label{def:dagger}
  A \emph{guarded fixpoint operator} on a cartesian category $\catC$ is a family of
  operations
  \[
  \dagger_{X,Y} : \catC(X \times \ibox Y, Y) \to \catC(X,Y)
  \]
  such that for every $f: X \times \ibox Y \to Y$ the following square
  commutes\footnote{Notice that we use the convention 
      of simply writing objects to denote the identity morphisms on them.}:
    \begin{equation}\label{eq:fixp}
      \vcenter{
        \xymatrix@C+1pc{
          X 
          \ar[r]^-{\sol f}
          \ar[d]_{\langle X, \sol f\rangle }
          &
          Y
          \\
          X \times Y
          \ar[r]_-{X \times \point_Y}
          &
          X \times \ibox Y
          \ar[u]_{f}
        }
      }
    \end{equation}
    where we drop the subscripts and write $\sol f:X \to Y$ in lieu of
    $\dagger_{X,Y}(f)$. A \emph{guarded fixpoint category} is a triple $(\catC,\ibox,\dagger)$.
\end{definition}

This is what one needs to categorically model a proof system for $\systres$,  in particular the rule
\inferrule{\varGamma,\opr\phi \vdash \phi}{\varGamma \vdash \phi}. Furthermore, Milius and Litak  \cite{MiliusL17:fi} study equational laws for morphisms corresponding to natural rules for proof rewriting and normalization. The reader is encouraged to compare this with standard  references on Curry-Howard-Lambek interpretation of modal natural deduction systems for better-known calculi \cite{BiermanP00:sl,BellinPR01:m4m,dePaivaR11}.

Note that as a corollary we obtain that if $X = \top$ and $g: Y \to Y$ factors through $\point_Y$ via $f: \ibox Y \to Y$ (such $g$ are called \emph{contractive}), then the global element $\sol f: \top \to Y$ is a fixpoint of $g$. Note furthermore that $\sol f$ in question may, but does not have to be uniquely determined; if it is, we speak of a \emph{unique guarded fixpoint operator} and, respectively, \emph{unique guarded fixpoint category}. The definition of a \emph{weak model of guarded fixpoint terms} by Birkedal et al. \cite[Def. 6.1]{BirkedalMSS12:lmcs} in fact assumes precisely the existence of unique fixpoints for $X = \top$ (from the proof-theoretic point of view, this means that the context $\varGamma$  is empty) for pointed (\C) endofuctors which preserve products (correspond to normal modalities). As shown by Milius and Litak \cite[Prop. 2.7]{MiliusL17:fi}, these 
conditions yield a guarded fixpoint operator in the above sense whenever the underlying cartesian category in question is in fact cartesian closed (i.e., it models Heyting implication as an operator on types). Seen from the perspective of modal proof theory, this observation is rather unsurprising.

Milius and Litak \cite[Ex. 2.4]{MiliusL17:fi} list several different classes of examples:
\begin{itemize}
\item An important if modally degenerate class of guarded fixpoint operators is provided by categories with an ordinary fixpoint operator \cite{h97,h99,sp00} (where $\ibox$ is taken to be the identity functor).
\item For any cartesian category, one can take $\ibox$ to be just the constant functor sending all objects to the terminal element.
\item One can define a guarded dagger structure on any \emph{let-ccc with a fixpoint object}  \cite{cp92,Crole:phd}, although only in the special case of the lifting functor $(\cdot)_\bot$ on $\omega$-complete partial orders do all the (guarded) Conway axioms including the \emph{double dagger identity} \cite[Definition 3.1]{MiliusL17:fi} hold. In this case, there is no hope for uniqueness. 
\item Another class \cite[\S\ 2.2]{MiliusL17:fi}  of (unique!) guarded fixpoint operators is obtained via duals of Kleisli categories of \emph{completely iterative monads} \cite{m05}, which provide a monadic presentation of Elgot's (\emph{completely}) \emph{iterative
theories}~\cite{elgot75,ebt78}.
\item In the category \lna{CMS} of complete 1-bounded\footnote{A  metric space is 1-bounded iff, for any $x,y \in
X$, we have ${\sf d}_X(x,y) \leq 1$} metric spaces and
non-expansive maps, i.e.\ maps $f: X \to Y$ such that for all $x,y \in
X$, we have ${\sf d}_Y(fx,fy) \leq {\sf d}_X(x,y)$ 
(cf., e.g.,~\cite{KrishnaswamiB11:lics,KrishnaswamiB11:icfp} or ~\cite[\S\ 5]{BirkedalMSS12:lmcs} and references therein), we can fix an arbitrary $r \in (0,1)$ and then take $\ibox$ to be the endofunctor keeping the carrier of the space and multiplying all distances by $r$. The proof that this yields a (unique!) guarded fixpoint operator relies directly on Banach's fixpoint theorem. It should be noted that spaces typically arising in CS applications (including the work of Nakano, cf. the last sentence of Appendix \ref{nakorig}) are not only metric, but ultrametric and moreover \emph{bisected}: all distances are negative multiples of 2. In such a setup,  one can canonically choose $r$ to be $0.5$. 
\item For any Noetherian poset $(W, <)$---i.e., one with no infinite descending chains---and any cartesian category $\mathcal{A}$, we consider the functor category $\mathcal{A}^{(W,>)}$, which can be also called the category of contravariant $(W,<)$-presheaves in $\mathcal{A}$. If either $(W,<)$ is the just order type $\omega$ of natural numbers or  if $\mathcal{A}$ is \emph{complete} (having the limits of all diagrams), then the resulting category allows  a (unique!) guarded fixpoint operator. For $W = \omega$, The delay functor 
      is given by
      $\ibox X(0) \deq 1$ and $\ibox X(n+1) \deq X(n)$ for $n \geq 0$,
      whereas $\point_X$ is given by
      {\small
       \[(p_X)_0 \deq \mathord{!} : X(0) \to 1 \text{ and }(\point_X)_{n+1}
      \deq X (n+1 \geq n): X(n+1) \to X(n).\]
      }
       For
      every $f: X \times  \ibox Y \to Y$ there is a unique $\sol f: X
      \to Y$ satisfying~\refeq{eq:fixp} given by 
      $\sol f_0 \deq f_0: X(0) \to Y(0)$ and 
      {\small
      \[
      \sol f_{n+1}
      \deq
      (\xymatrix@1{
        X(n+1) \ar[rrrr]^-{\langle X(n+1),\sol f_n \cdot X(n+1 \geq n)
          \rangle}
        &&&&
         X(n+1) \times Y(n)
        \ar[r]^-{f_{n+1}}
        &
        Y(n+1)
      }).\]
      }
\end{itemize}

Perhaps the most important concrete instance of the last example is the above-mentioned topos of trees \cite{BirkedalMSS12:lmcs}, i.e., $\mathsf{Set}$-valued presheaves on $\omega$. As noted by Birkedal et al. \cite[\S\ 5]{BirkedalMSS12:lmcs}, the subcategory of this topos consisting of total objects is  equivalent to that of bisected, complete ultrametric spaces. But in the topos setting, logical propositions can be read not only as types, but also  as predicates or properties. And in fact, this reading is available in broader classes of categories. 

\subsection{$\systres$ and Predicate Transformers: Well-founded Delay Coalgebras} \label{sec:proppred}

A categorical account of the notion of a property over a given object $C \in \ctE$, somewhat more flexible than the one provided by \emph{hyperdoctrines} but following the same approach, is given by Pitts \cite[Def. 5.2.1]{Pitts00:catl}.  
\begin{definition}
A pair $(\ctC,\Prop)$ consisting of a cartesian  category $\ctC$ and a contravariant functor $\Prop$ from $\ctC$ to the category of poset and order-preserving mappings is called a \emph{prop-category}. For any object $X \in \ctC$, $\Prop(X)$ is the collection of ($\ctC$-)\emph{properties} of $X$.
\end{definition}

In many categories, the poset of (equivalence classes of) \emph{subobjects} of $X$ provides a natural candidate for the collection of its properties (although Pitts \cite{Pitts00:catl} illustrates that in some cases there might be better choices). Recall that a \emph{subobject} of $X$ is of the form $A \stackrel{m}{\monar} X$, with $m$ being a \emph{monomorphism} (also known as \emph{monic} or \emph{mono}), i.e., left-cancellative.  This is an obvious generalization of (an isomorphic copy of) a subset of $X$. We say that $m \leq m'$ whenever $m$ factors through $m'$ and identify $\leq$-equivalence classes. We say that a category is \emph{well-powered} if for any $X$ the collection of such equivalence classes  is a set rather than a proper class and write $\PropS{X}$ for the resulting poset. 

For any $X$, the poset $\PropS{X}$ obviously has a greatest element, namely the identity on $X$. If $\ctC$ has an initial object, $\PropS{X}$ always has a least element.  If $\ctC$ has pullbacks, $\PropS{X}$ is in fact a meet-semilattice, and if $\ctC$ has arbitrary limits, a complete lattice. 


  Under this perspective,  type-level modalities-as-endofunctors  induce modalities as \emph{predicate transformers} acting on semilattices of properties. More specifically, whenever $\ibox: \ctE \to \ctE$ is   monic-preserving and $\ctC$ has pullbacks, associate with a $\ibox$-coalgebra $X \stackrel{\gamma}{\to} \ibox X$ a modality $\pbmod{\gamma}{X}$ on $\PropS{X}$: 

\begin{equation} \label{eq:pbcl}
\vcenter{
\xymatrix{
\pbmod{\gamma}{X} M \ar[d]\ar@{>->}[r]^-{\pbmod{\gamma}{X} m}_<<{\!\!\!\!\!\!\!\pb} & X \ar[d]^{\gamma}\\
\ibox M \ar@{>->}[r]^{\ibox m    } & \ibox\, X
}}
\end{equation}

\noindent (see \cite{AdamekMMS12:fossacs} for the history of this diagram in papers on well-founded coalgebras starting with Taylor). Furthermore, whenever $\ibox$ is a $\C$-endofunctor, i.e., is pointed (delayed) with $\point: \Id \to \ibox$ as above, $\point_M$ is a subcoalgebra of $\point_X$ for any $M \stackrel{m}{\monar} X$: 
\[
\vcenter{
\xymatrix{
M \ar@{>->} `u[r] `[rr]^-{m} [rr] \ar[dr]_-{\point_M} \ar@{.>}[r]
& \pbmod{\point}{X} M \ar[d]\ar@{>->}[r]^-{\pbmod{\point}{X} m}_<<{\!\!\!\!\!\!\!\!\!\pb} & X \ar[d]^{\point_X}\\ 
& \ibox M \ar@{>->}[r]^{\ibox m    } & \ibox\, X
}}
\]
meaning that $m \leq \pbmod{\point}{X} m$ in the natural partial order of subobjects of $X$, i.e., the ``local'' translation of $\C$ is universally valid in any $\PropS{X}$. Note that the apart from pointedness itself, we only used here the defining property of pullbacks. 
Nevertheless, transferring inhabitation laws for a pointed endofunctor $\ibox$ to inequalities holding in any $\PropS{X}$ for the corresponding modality $\pbmod{\point}{X} $ is actually a non-trivial challenge and does not always work. 

In fact, let us consider the remaining part of $\systres$, namely well-foundedness. For the poset of subobjects, this law is naturally expressed as the L\"ob rule: $\pbmod{\point}{X} m \leq m$ implies $\top \leq m$. This condition amounts precisely to the claim that $X \stackrel{\point_X}{\to} \ibox\, X$ is a \emph{well-founded coalgebra} \cite{AdamekMMS12:fossacs}.  And, as it turns out, even in natural examples of guarded fixpoint categories (\systres-categories discussed in \S\ \ref{sec:proptypes} above) the induced $\pbmod{\point}{X}$ modality may fail to be L\"ob in this sense!

\begin{example} \label{ex:cms}
Consider the guarded dagger category $(\lna{CMS},\ibox)$ of complete 1-bounded metric spaces and non-expansive mappings, where $\ibox$ is contracting all distances by a fixed factor $r$. Then $\point_X$ is just identity on points; similarly, for every morphism $m$ and every $x$, we have that $\ibox\, m(x) = m(x)$. Fix any $M \stackrel{m}{\monar} X$. The pullback $\pbmod{\point}{X} M$ is obtained set-wise, i.e., as the subset 
\[
\{(x,n) \mid x \in X, n \in M,  \point_X(x) = \ibox \, m(n)  \} 
\]
which in turn simplifies to $\{(m(n),m(n)) \mid n \in M\}$. In order words, this is just the subset of the diagonal corresponding to the image of $M$ in $X$. Crucially, as the product distance is given by supremum, it does not become contracted and the resulting space is isomorphic to (the $m$-image of) $M$. But there are non-trivial subspaces in this category, thus well-foundedness (the L\"ob rule) fails. 
\end{example}

Nevertheless, there is an important subclass of guarded dagger categories for which the induced modality on subobjects is strong L\"ob in this sense.  Recall that a cartesian closed category $\ctE$ is a(n \emph{elementary}) \emph{topos} if it comes equipped with a subobject classifier  $(\Omega, \trmo \stackrel{\topT}{\to} \Omega)$, i.e., for any monic $Y \stackrel{f}{\monar} X$ there exists exactly one mapping $X \stackrel{\chrm{f}}{\to} \Omega$ s.t. 
we have a pullback diagram:
\[
\vcenter{
\xymatrix{
Y \ar[d]_{\finim_Y}\ar@{>->}[r]^{f}_<<{\pb} & X \ar[d]^{\chrm{f}}\\ 
\trmo \ar[r]^{\topT} & \Omega
}}
\]

\noindent  
This definition already implies that the category has coproducts 
 \cite[\S\ 4.3]{Goldblatt06:topoi}. 
  Toposes are one of most important and well-studied categorical setups \cite{LambekS86:ihocl,MacLaneM92:sigl,johnstone2002sketches}. In our case, they provide precisely the missing structure to disable the pathological Example \ref{ex:cms}. 

\begin{theorem} \label{th:topolob}
Let $(\ctC,\ibox)$ be a unique guarded dagger category s.t.
\begin{itemize}
\item $\ctC$ is a topos and
\item $\ibox$ preserves  finite limits \textup(in particular monos and pullbacks\textup).
\end{itemize}
In such a situation, $X \stackrel{\point_X}{\to} \ibox X$ is a well-founded coalgebra for any $X$.  
 \end{theorem}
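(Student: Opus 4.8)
The plan is to \emph{internalise} the modality $\pbmod{\point}{X}$ into a single endomap $\ell\colon\Omega\to\Omega$ of the subobject classifier, and then deduce the internal form of the L\"ob rule for $\ell$ from the \emph{uniqueness} of guarded fixpoints. Recall that ``well-founded'' here means exactly: for every $m\colon M\monar X$, if $\pbmod{\point}{X}m\le m$ in $\PropS{X}$ then $\top\le m$; and via the standard identification $\PropS{X}\cong\ctC(X,\Omega)$ (with the order given pointwise by the internal Heyting order) this becomes: for every object $X$ and every $p\colon X\to\Omega$, if $\ell\circ p\le p$ then $p=\topT\circ\finim_X$.

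\textbf{Step 1 (building $\ell$ and identifying the modality).} Since $\ibox$ preserves finite limits, $\ibox\trmo$ is terminal, so $\ibox\topT\colon\ibox\trmo\monar\ibox\Omega$ is a subobject of $\ibox\Omega$; let $\chrm{\ibox\topT}\colon\ibox\Omega\to\Omega$ be its characteristic map and set $\ell:=\chrm{\ibox\topT}\circ\point_\Omega$. The claim is that $\chrm{\pbmod{\point}{X}m}=\ell\circ\chrm{m}$ for every mono $m$. This follows by pasting pullbacks: applying $\ibox$ to the classifying square of $m$ (legitimate since $\ibox$ preserves pullbacks) exhibits $\ibox m$ as the pullback of $\ibox\topT$ along $\ibox\chrm{m}$, hence $\chrm{\ibox m}=\chrm{\ibox\topT}\circ\ibox\chrm{m}$; then $\pbmod{\point}{X}m$, being the pullback of $\ibox m$ along $\point_X$, satisfies $\chrm{\pbmod{\point}{X}m}=\chrm{\ibox\topT}\circ\ibox\chrm{m}\circ\point_X$, and naturality of $\point$ at $\chrm{m}$ gives $\ibox\chrm{m}\circ\point_X=\point_\Omega\circ\chrm{m}$, so this equals $\ell\circ\chrm{m}$.

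\textbf{Step 2 (from $\le$ to $=$, then uniqueness).} Pointedness alone yields $m\le\pbmod{\point}{X}m$ (the local $\C$, as already noted in the text), i.e.\ $p\le\ell\circ p$ for every $p$; combined with the hypothesis $\ell\circ p\le p$ this forces $p=\ell\circ p$. So it suffices to show that the only fixpoint of the operation $q\mapsto\ell\circ q$ on $\ctC(X,\Omega)$ is $\topT\circ\finim_X$. Now $\ell$ factors through $\point_\Omega$ by construction, so this operation is ``contractive'': with $f:=\chrm{\ibox\topT}\circ\pi_2\colon X\times\ibox\Omega\to\Omega$ one checks $f\circ(X\times\point_\Omega)\circ\langle X,h\rangle=\ell\circ h$ for every $h\colon X\to\Omega$, so the defining diagram of the (unique!) guarded fixpoint operator says precisely that $\sol f$ is the \emph{unique} $h$ with $h=\ell\circ h$. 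Finally, $\topT\circ\finim_X$ is such a fixpoint, because $\ell\circ\topT=\topT$ (naturality of $\point$ at $\topT$ together with $\chrm{\ibox\topT}\circ\ibox\topT=\topT$, using $\ibox\trmo\cong\trmo$). Hence $p=\sol f=\topT\circ\finim_X$, and since $X$ was arbitrary, $\point_X\colon X\to\ibox X$ is well-founded.

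The main obstacle I expect is Step 1: one must check carefully that the hypothesis ``$\ibox$ preserves finite limits'' is exactly what licenses both clauses used there --- that $\ibox m$ is again a subobject classified by $\chrm{\ibox\topT}\circ\ibox\chrm{m}$ (preservation of pullbacks) and that $\ibox\trmo$ is terminal (preservation of the terminal object) --- and that the two pullback-pasting steps together with the naturality square for $\point$ genuinely compose to $\chrm{\pbmod{\point}{X}m}=\ell\circ\chrm{m}$. Once the modality is internalised, Steps 2 is essentially the categorical transcription of the derivation of $\sL$ from $\C$ plus the existence of (unique) fixpoints, and the pathology of Example~\ref{ex:cms} is avoided precisely because $\lna{CMS}$ is not a topos: there is no subobject classifier through which to funnel the argument.
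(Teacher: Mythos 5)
Your proof is correct, and it is worth noting that the paper itself does not prove Theorem~\ref{th:topolob} at all: it simply cites it as Corollary 6.10 of Birkedal et al.\ \cite{BirkedalMSS12:lmcs}. So what you have written is a self-contained reconstruction, and it follows essentially the strategy of the cited source: internalise the predicate transformer as $\ell:=\chrm{\ibox\topT}\circ\point_\Omega\colon\Omega\to\Omega$ (legitimate since $\ibox$ preserves the terminal object, monos and pullbacks, and pasting the two classifying pullbacks with the naturality square of $\point$ at $\chrm{m}$ indeed gives $\chrm{\pbmod{\point}{X}m}=\ell\circ\chrm{m}$), note that $q\mapsto\ell\circ q$ on $\ctC(X,\Omega)$ is exactly the fixpoint equation \refeq{eq:fixp} for the contractive $f=\chrm{\ibox\topT}\circ\pi_2\colon X\times\ibox\Omega\to\Omega$, and then use uniqueness of $\sol f$ plus $\ell\circ\topT_X=\topT_X$ to conclude that the only subobject with $\pbmod{\point}{X}m\leq m$ (hence, by pointedness, $=m$) is the top one. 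All the individual computations you flag in Step 1 do check out. The one point you should state explicitly is which notion of ``unique'' you invoke: you use uniqueness of the solution of \refeq{eq:fixp} for an arbitrary parameter object $X$ (the standard reading of Milius--Litak, and the natural reading of Definition~\ref{def:dagger} here), whereas the weaker reading suggested by the surrounding discussion (unique \emph{global} fixpoints of contractive endomorphisms, as in Definition 6.1 of \cite{BirkedalMSS12:lmcs}) would require a small bridge; in the present setting that bridge is available, since a topos is cartesian closed and $\ibox$ preserves products, so global uniqueness already yields parametrised uniqueness by \cite[Prop.\ 2.7]{MiliusL17:fi}, as recalled in the text. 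Your closing observation about Example~\ref{ex:cms} is also apt: the failure there is precisely the lack of a subobject classifier through which to funnel this argument.
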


This is Corollary 6.10 in Birkedal et al. \cite{BirkedalMSS12:lmcs}. But since we put toposes into the picture, we have to note that in such a setting, there is yet another light in which we might see the validity of the strong L\"ob axiom or failure thereof.

\subsection{$\systres$ Modality as a Logical Connective: Scattered Toposes} \label{sec:propomega}

There is yet another logical perspective to toposes. It can be called  \emph{propositions as truth values}, and the resulting logic is higher-order. Namely, the subobject classifier is naturally seen as the object of truth values. The definition of resulting internal logic can be found, e.g., in Lambek and Scott \cite[Def. 3.5]{LambekS86:ihocl}. 

Variables and constants of a given type $A$ are seen as global elements $\trmo \to A$. Thus, elements of $\ctC[\trmo, \Omega]$ can be identified with logical constants, those of $\ctC[X,\Omega]$ ---with predicates over $X$ (i.e., formulas with a single free variable from $X$) and $n$-ary propositional connectives are elements of $\ctC[\Omega^n,\Omega]$. 
Define $\botT \deq \chrm{\finim_0}$. This yields a logical constant. Similarly, we can define logical connectives as follows: 
\begin{center}
$\negT \deq \chrm{\botT}$, \quad $\andT \deq \chrm{\prdar{\topT,\topT}}$ \quad and $\orT \deq \chrm{\cprar{\prdar{\topT_\Omega,\id_\Omega},\prdar{\id_\Omega,\topT_\Omega}}}$. 
\end{center}
For any $X\in\ctE$, $\topT_X : X \to \Omega$ stands for $\topT \cmp \finim_X$ and $\eqcT_X$ stands for  $\chrm{\prdar{\id_X,\id_X}}$. The latter yields the definition of \emph{internal equality for generalized elements of type $X$} as $\sigma \eqT \tau \deq \eqcT_X \cmp \prdar{\sigma,\tau}$. That is, for any $A \stackrel{\sigma}{\to} X$ and $B \stackrel{\tau}{\to} X$, we have $A \times B \stackrel{\sigma \eqT  \tau}{\longrightarrow} \Omega$. For $\Omega$, we can define not only $\eqcT_\Omega$, but also $\leqcT_\Omega$ as the equalizer of 
$\vcenter{
\xymatrix{
\Omega \times \Omega \ar@<1ex>[r]^-{\andT}\ar@<-1ex>[r]_-{\prdf} & \Omega
}}$. 
Implication, the only remaining intuitionistic connective, can be now defined as $\impT \deq \chrm{\leqcT_\Omega}$. 

 Here is where the higher-order logic aspect comes into play. For any type $A,$ (corresponding to an object on the categorical side) and any predicate $\phi : A \to \Omega$ over $A$, we can define $\forall x : A. \phi: \trmo \to \Omega$ as $(\lambda x : A.\phi)  \eqT (\lambda x: A. \topT_A)$. But $A$ itself can be taken to be $\Omega$: it is possible to quantify over propositions. 

 And regarding subobject and predicate transformers of \rfse{proppred}, note that the correspondence between a subobject $Y \stackrel{m}{\monar} X$ and its characteristic morphism $X \stackrel{\chrm{m}}{\longrightarrow} \Omega$ is 1-1. Starting from the opposite side: given any $X \stackrel{q}{\longrightarrow} \Omega$, one can pull it back along $\trmo \stackrel{\topT}{\longrightarrow} \Omega$ to obtain a subobject of $X$ (to show that it indeed yields a mono, use the defining properties of pullbacks and $\trmo$).

 The above pullback argument shows that any $\Omega \to \Omega$ induces uniformly on each $X$ what we called a predicate transformer or a modality on subobjects, i.e., a (set-theoretic) map from $\PropS{X}$ to $\PropS{X}$. But how and when is the opposite transformation possible? In other words, when given a generic predicate transformer, i.e., a ``dependent function'' which assigns to every $X$ a (set-theoretic) endofunction on $\PropS{X}$, can we tell whether it is induced by an element of $\ctC[\Omega,\Omega]$?  Given the topos identification of $\PropS{X}$ with $\ctC[X,\Omega]$, this is in fact a direct application of the Yoneda Lemma: such ``generic predicate transformers'' are exactly natural transformations from $\ctC[\cdot,\Omega]$ to $\ctC[\cdot,\Omega]$. And the fact that $ \pbmod{\point}{X}$ is natural in $X$ for any  (finite) limit-preserving guarded fixpoint operator $\ibox$ on a topos  is established by Proposition 6.5 and Section 6.2 of  Birkedal et al. \cite{BirkedalMSS12:lmcs}. Furthermore, their Theorem 6.8  uses the above Theorem \ref{th:topolob} (Corollary 6.10 in \emph{op.cit}) to show that in such a situation the resulting $\interm: \Omega \to \Omega$ is a $\systres$-modality in the internal logic, i.e., one satisfying 
\begin{equation} \label{eq:lobsca}
\forall \phi : \Omega. (\interm\phi \to \phi) \to \phi.
\end{equation}

Interestingly, toposes with an internal modality $\interm: \Omega \to \Omega$ satisfying \refeq{eq:lobsca} have been considered by Esakia et al. \cite{EsakiaJP00:apal} as \textit{scattered toposes}. See an overview of the second author \cite{Litak14:trends} for a more detailed discussion.

\newcommand{\etalchar}[1]{$^{#1}$}

\appendix

\section{\iglu{\C}\ in Nakano's Papers} \label{nakorig}

\newcommand{\nak}{\mathbb{L}^\to}
\newcommand{\cnak}{\mathcal{C}\mathbb{L}^\to}
\newcommand{\nakfp}{\mathbb{L}^\to_{\mathsf{fp}}}
\newcommand{\isom}{\simeq}
\newcommand{\subt}{\curlyeqprec}

Nakano's original work  \cite{Nakano00:lics,Nakano01:tacs} is of some interest also today.
 First, an important r\^ole is played therein by the existence and uniqueness (up to a suitable notion of equivalence) 
of fixpoints of  \emph{type expressions}---i.e., logical  
formulas 
 via the Curry-Howard correspondence---where the type variable is guarded 
by the modality. 
Compared to other references presented in Section \ref{sec:nakano}, Nakano's pioneering papers are also among the few ones explicitly addressing the connection with earlier modal results. For this reason, we present them in some detail in this appendix. Some caveats are necessary:
\begin{itemize}
\item The long list of references mentioned in \rfsc{tfsl} is not discussed by Nakano \cite{Nakano00:lics}, who 
says instead ``Similar results concerning the existence of fixed points
of proper type expressions could historically go back to the fixed point theorem of the logic of 
provability\dots \emph{The difference is that our logic is
intuitionistic}'' and focuses on references assuming classical propositional basis  \cite{Boolos93:lop,japa:logi98}.  On the other hand, as we will see, the connection with fixpoint theorems for L\"ob-like logics is not entirely straightforward.
\item Nakano extends the scope of his fixpoint operator (written somewhat misleadingly as $\mu$, which we correct here to our $\fip$) 
slightly beyond modalised formulas, to include as well some formulas equivalent to $\top$.
\item Furthermore, Nakano considers more fine-grained equivalence $\isom$ (respectively quasi-order $\subt$) on formulas than the one induced by  $\vdash \phi \leftrightarrow \psi$ (respectively $\vdash \phi \to \psi$).
\end{itemize}
Let us discuss these issues in more depth. 

Without the fixpoint operator, Nakano's syntax of type expressions is the implicational fragment of $\lang(\opr)$, i.e., $\nak(\opr)$ defined by $\phi$ in the following grammar

\begin{itemize}
\item
$\phi ::= \pi \mid \top \mid  (\phi \to \phi) \mid \opr\phi$,
\item $\pi ::= p_0 \mid p_1 \mid \dots$.
\end{itemize}


\noindent
A variant of Nakano's fixpoint extension\footnote{\label{ft:variables}Staying entirely faithful to Nakano's setup would require using explicit bound fixpoint variables, something we are deliberately shying away from in this paper.} can be defined very similarly to that of our \S\ \ref{fpsl} above, with a minor twist. Namely, $\nakfp(\opr)$ is defined by $\phi$ in the following grammar:

\begin{itemize}
\item
$\phi ::=  \pi \mid \top \mid \fip\chi \mid (\phi\to\phi) \mid \opr\phi$,
\item
$\pi ::= p_0 \mid p_1\mid p_2 \dots$, 
\item
$\chi ::= \pi \mid \top  \mid \fip\chi  \mid  (\chi \to \chi) \mid (\psi \to \tau) \mid \opr \psi$, 
\item
$\psi ::=  \ast \mid \pi \mid \top \mid \fip\chi \mid (\psi\to\psi) \mid \opr\psi $,
\item $\tau ::= \fip \tau \mid \opr \tau \mid \opr\tau'$,
\item $\tau' ::= \ast \mid \fip \tau' \mid \opr \tau'$.
\end{itemize}
Just like in \S\ \ref{fpsl}, we note that $\ast$ is supposed to be bound by the lowest $\fip{}$ that is above it in the parse tree. 
Other relevant remarks from \S\ \ref{fpsl} apply here as well.  The (small) novelty here consists in the clause $\psi \to \tau$ in $\chi$; the class of expressions defined by $\tau$ are called \emph{$\top$-variants} by Nakano \cite{Nakano00:lics}.\footnote{In the subsequent reference 
\cite{Nakano01:tacs}, a distinction is made between  \emph{$F\!\hyph\!\top$-variants} and \emph{$S\!\hyph\!\top$-variants}; 
the notion introduced above corresponds to the former one. Motivation comes from the intended semantics of $\lambda$-expressions.} 
He proceeds then to define an equivalence relation $\isom$ on $\nakfp(\opr)$ as the smallest one which is:
\begin{itemize}
\item congruent wrt $\opr$ and $\to$,
\item satisfying $(\phi \to \top) \isom (\phi' \to \top)$,
\item and congruent wrt (un)folding laws for the unique fixpoint operator $\fip$, i.e., 
\begin{itemize}
\item $\fip\chi  \isom \chi\fip\chi$ and 
\item $\phi \isom \chi\phi$ implies $\phi \isom \fip\chi$.
\end{itemize}
\end{itemize}
Nakano observes that terms $\isom$-equivalent to $\top$ are precisely $\top$-variants, i.e.,
 those given by $\tau$ in the above grammar. Furthermore, while $\isom$ is more fine-grained that provable equivalence 
 (as we are going to see below), this is the level on which Nakano incorporates a very restricted version of the syntactic fixpoint construction, 
 basically reflecting the fact that $\top$-variants are equivalent to $\top$. Namely, the set of \emph{canonical} formulas 
 (type expressions) $\cnak(\opr)$ is defined by $\alpha$ in the following grammar:
\begin{itemize}
\item $\alpha ::= \opr\alpha \mid \beta$,
\item $\beta ::= \pi \mid \top \mid  (\phi \to \phi)$,
\end{itemize}
and $\phi$ is as given above in the definition of $\nakfp(\opr)$. In particular, in formulas from $\cnak(\opr)$, the operator $\fip$ can only occur within the scope of $\to$. Nakano shows that every formula in $\nak(\opr)$ is $\isom$-equivalent to one in $\cnak(\opr)$ via a translation inductively replacing $\fip\Box^n\ast$ and $\Box^n\top$ with $\top$.


Nakano's papers define several variants of the subtyping relation\footnote{We change Nakano's notation here to avoid clash with the notation we use in presenting Kripke semantics.} $\subt$. Their common core is  the smallest transitive relation on $\nakfp(\opr)$ containing $\isom$ and furthermore:
\begin{itemize}
\item antimonotone in the antecedent and monotone in the consequent of $\to$,
\item congruent\footnote{As mentioned in Footnote \ref{ft:variables}, our setup simplifies that of Nakano inasmuch as fixpoint (type) variables are concerned. This matters when defining the interaction of $\subt$ with $\fip$ in a (sub)typing context: type variables bound by fixpoint operators might be related by $\subt$ and Nakano has an explicit rule dealing with this.} wrt $\opr$ and $\fip$,
\item with $\top$ as the greatest element, i.e., $\phi \subt \top$,
\item satisfying $\phi \subt \opr\phi$ and $\phi \to \psi \subt \opr\phi \to \opr\psi$.
\end{itemize}
 
 These are the minimal subtyping rules. Furthermore,
 \begin{description}
 \item[Nak0] one possible extension
 \cite{Nakano00:lics} involves $\opr\phi \to \opr\psi \subt \Box(\phi \to \psi)$;
 \item[Nak1] another one \cite{Nakano01:tacs} involves $\Box(\phi \to \psi) \subt \opr\phi \to \opr\psi$.
 \end{description}
But $\subt$ is still more fine-grained than the relation of provable implication. Namely, relatively to a given subtyping relation $\subt$, Nakano defines an extension of (sub)typed lambda calculus that apart from standard STLC rules and the subtyping rule includes specifically modal rules
\begin{itemize}
\item From $\varGamma \vdash M: \phi$ infer $\opr\varGamma \vdash M: \opr\phi$ and 
\item From $\varGamma \vdash M: \Box^n(\phi \to \psi)$ and $\varGamma \vdash N: \Box^n\phi$ infer $\varGamma \vdash MN: \Box^n\psi$.
\end{itemize}

Clearly,  Nakano's logic of \emph{type inhabitation} is stronger than the logic of subtyping: $\phi \subt \psi$ implies that $\phi \to \psi$ is inhabited, but not necessarily the other way around. In particular, Nakano's typing rules entail that the logic of inhabitation is always normal, whether or not normality is built into the subtyping relation (i.e., even in the absence of Nak1 above). Of course, the difference between the logic of inhabitation and the logic of subtyping is visible even in the absence of modality: well-known references \cite{BarendregtEA83:jsl,DezaniEA02:ndjfl,DezaniEA03:entcs} connect subtyping with weak relevance logics.

In the absence of fixpoint operators, Nakano's logic of type inhabitation is simply $\iglu{\C}$ restricted to $\nak(\opr)$ \cite{Nakano01:tacs} or an extension thereof if Nak0 above is incorporated in the subtyping discipline \cite{Nakano00:lics}. However, Nakano is interested in semantics of his $\lambda$-calculus and equality of types (corresponding to $\isom$ above) does not coincide with provable equivalence. The latter  means that one type is inhabited iff the other is, which is a fairly coarse relation. But the syntactic fixpoint construction for L\"ob-like logics works modulo provable equivalence.  
 Instead, Nakano essentially relies on the  existence of  Banach-style fixpoints in ultrametric spaces, and numerous later references proceed in an analogous way.

\end{document}